\documentclass[11pt,reqno]{amsart}
\usepackage{a4wide}

\numberwithin{equation}{section}
\usepackage{mathrsfs}
\usepackage{amsfonts}
\usepackage{amsmath}
\usepackage{stmaryrd}
\usepackage{amssymb}
\usepackage{amsthm}
\usepackage{mathrsfs}
\usepackage{url}
\usepackage{amsfonts}
\usepackage{amscd}
\usepackage{indentfirst}
\usepackage{enumerate}
\usepackage{amsmath,amsfonts,amssymb,amsthm}
\usepackage{amsmath,amssymb,amsthm,amscd}
\usepackage{graphicx,mathrsfs}
\usepackage{appendix}
\usepackage[pagewise]{lineno}
\usepackage[numbers,sort&compress]{natbib}

\usepackage{color}
\usepackage{txfonts}
\usepackage{latexsym}
\usepackage{hyperref}
\usepackage{hypernat}
\allowdisplaybreaks

\newcommand{\D}{\mathbb{D}}
\newcommand{\R}{\mathbb{R}}
\newcommand{\Z}{\mathbb{Z}}

\newcommand{\E}{\mathbb{E}}

\renewcommand{\theequation}{\arabic{section}.\arabic{equation}}

\setcounter{equation}{0}

\newtheorem{Thm}{Theorem}[section]
\newtheorem{Lem}[Thm]{Lemma}

\newtheorem{Prop}[Thm]{Proposition}


\newtheorem{Rem}[Thm]{Remark}

\begin{document}
\title[Infinitely many bubbling solutions and non-degeneracy  results]
{Infinitely many bubbling solutions and non-degeneracy  results to fractional prescribed curvature problems
}

\author{Lixiu Duan, Qing Guo}
\address[Li-Xiu Duan]{School of Mathematical Sciences, Beihang University (BUAA), Beijing 100191, P. R. China, and Key Laboratory of Mathematics, Informatics and Behavioral Semantics, Ministry of Education, Beijing 100191, P. R. China}
\email{dlx18801183265@163.com}

\address[Qing Guo]{College of Science, Minzu University of China, Beijing 100081, China}
\email{guoqing0117@163.com}

\date{\today}


\begin{abstract}
We consider the following fractional prescribed curvature problem
\begin{align}\label{eq01}
(-\Delta)^s u=  K(y)u^{2^*_s-1},\ \ u>0,\ \ y\in \R^N,
\end{align}
where $s\in(0,\frac{1}{2})$ for $N=3$, $s\in(0,1)$ for $N\geqslant4$ and $2^*_s=\frac{2N}{N-2s}$ is the fractional critical Sobolev exponent, $K(y)$ has a local maximum point in $r\in(r_0-\delta,r_0+\delta)$.

First, for any sufficient large $k$, we construct a $2k$ bubbling solution to \eqref{eq01} of some new type, which concentrate on an upper and lower surfaces of an oblate cylinder through the Lyapunov-Schmidt reduction method.  Furthermore, a non-degeneracy result of the multi-bubbling solutions is proved by use of various Pohozaev identities, which is new in the study of the fractional problems.

\end{abstract}
\maketitle {\small {\bf Keywords:}  Fractional critical elliptic equation, Infinitely many solutions, Lyapunov-Schmidt Reduction method, Non-degeneracy, Pohozaev identities.\\

{\bf 2010 AMS Subject Classifications}: 35J20, 35J60, 35B25.}
\maketitle

\section{Introduction and main results}
\setcounter{equation}{0}
In this paper, we consider the following fractional prescribed curvature equation
\begin{align}\label{eq11}
(-\Delta)^s u=  K(y)u^{2^*_s-1},\ \ u>0,\ \ y\in \R^N,
\end{align}
where $s\in(0,\frac{1}{2})$ for $N=3$, $s\in(0,1)$ for $N\geqslant4$ and $2^*_s=\frac{2N}{N-2s}$ is the fractional critical Sobolev exponent. $(-\Delta)^s$ is the nonlocal operator defined as
$$(-\Delta)^su=c(N,s)P.V.\int_{\R^N}\frac{u(x)-u(y)}{|x-y|^{N+2s}}dy,$$
where $P.V.$ is the principal value and $c(N,s)=\pi^{-(2s+\frac{N}{2})}\Gamma(s+\frac{N}{2})/\Gamma(-s)$. For more details on the fractional Laplace operator, one can refer to \cite{WCYLPM,EDNGPEV}.

Recently, problems with fractional Laplacian have been extensively studied, see for example \cite{b-c-s-s,b-c-p,b-c-p-s,c-s,c-t,f-q-t,RLFEL,RLFELLS,YJ,n-t-w,
silvestre,tan,t-x,y-y-y} and references therein. Particularly, various critical problems
for the fractional Laplacian were considered in  \cite{b-c-p,c-t,GMPY1,j-l-x} and references therein.
\medskip

When $s=1$, Wei and Yan in \cite{JS} consider the prescribed scalar curvature problem $$-\Delta u=K(|y|)u^{2^*-1}\ \ u>0,\ \ y\in \R^N,$$ and they proved the existence of infinitely many solutions concentrated on  a circle. Later, Guo, Musso, Peng and Yan in \cite{GMPY1} studied the  non-degeneracy of this $k$-bubbling solution, by which, they use the gluing method to construct a new solutions. Precisely, with $n\gg k$, they glue the $k$-bubbling solution, which
concentrates  at the vertices of the regular $k$-polygon in the $(y_1, y_2)$-plane, with a $n$-spike solution, whose centers lie in another circle in the
$(y_3, y_4)$-plane. In \cite{DLP-MM-W}, the existence of this solution $2k$ Aubin-Talenti bubbles are centred at points lying on the top and the bottom circles of a cylinder is studied, whose energy can be made arbitrarily. In this paper, we extend the results to the  fractional case nontrivially and study the corresponding non-degeneracy results.
\medskip

To illustrate our results, we first give some denotations and definitions.
Define $\dot{H}^s(\R^N)$ space as the closure of the set $C_c^\infty(\R^N)$ of compact supported smooth functions under the norm
$$\|u\|_{\dot{H}^s(\R^N)}=\|(-\Delta)^{\frac{s}{2}}u\|_{L^2(\R^N)}=\||\xi|^s\mathcal{F}(u)(\xi)\|_{L^2(\R^N)},$$
where $\mathcal{S}$ is the Schwartz space of rapidly decaying $C^\infty$ functions on $\R^N$, $\mathcal{F}$ is the Fourier transformation of $\phi$ by:
$$\mathcal{F}\phi(\xi)=\frac{1}{(2\pi)^{\frac{N}{2}}}\int_{\R^N}e^{-i\xi x}\phi(x)dx,\ \forall\phi\in\mathcal{S}.$$
Another equivalent space is defined by
$$D^s(\R^N):=\Big\{u\in L^{\frac{2N}{N-2s}}(\R^N):\|(-\Delta)^{\frac{s}{2}}u\|_{L^2(\R^N)}<\infty\Big\}.$$

Through  the extension formulation of $(-\Delta)^s$ introduced in \cite{c-s}, the equation \eqref{eq11} is equivalent to a degenerate elliptic equation with a Neumann boundary problem, which is fundamentally different from the classical elliptic equation and defined on the upper half-space $\R^N_+=\{(y,t):y\in\R^N,t>0\}$. For
$\forall$ $u\in\dot{H}^s(\R^N)$, we set
$$\tilde u(y,t)=\mathcal{P}_s[u]:=\int_{\R^N}\mathcal{P}_s(y-\xi,t)u(\xi)d\xi
,\quad (y,t)\in\R^{N+1}_+:=\R^N\times[0,+\infty),$$
where $$\mathcal{P}_s(x,t)=\beta(N,s)\frac{t^{2s}}{(|x|^2+t^2)^{\frac{N+2s}{2}}},$$
with a constant $\beta(N,s)$ such that $\int_{\R^N}\mathcal{P}_s(x,1)dx=1$. Thus, $\tilde u\in L^2(t^{1-2s},K)$ for any compact set $K$ in $\overline{\R^{N+1}_+}$, $\nabla \tilde u\in L^2(t^{1-2s},\R^{N+1}_+)$ and $\tilde u\in C^{\infty}(\R^{N+1}_+)$.

\medskip

It is well known that for any $x\in\R^N$ and $\Lambda>0$ the functions
$$U_{x,\Lambda}(y)=(4^s\gamma)^{\frac{N-2s}{4}}\Big(\frac{\Lambda}{1+\Lambda^2|y-x|^2}\Big)^{\frac{N-2s}{2}}$$
with $\gamma=\frac{\tau(\frac{N+2s}{2})}{\tau(\frac{N-2s}{2})}$,
are the only solutions to the problem (see \cite{L})
\begin{align}\label{eq12}
(-\Delta)^s u=  u^{2^*_s-1},\ \ u>0 \ \ \text{in} \ \R^N.\end{align}
Moreover,  the functions
\begin{align*}
Z_{0}(y)=\frac{\partial U}{\partial \Lambda}\Big\vert_{\Lambda=1}, \ \ \ Z_i(y)=\frac{\partial U}{\partial y_i}\Big(y\Big),\ \ i=1,2,\cdots,N
\end{align*}
solving the linearized problem
\begin{align*}
(-\Delta)^s \phi=(2^*_s-1)U^{2^*_s-2}\phi,\ \ u>0 \ \ \text{in} \ \R^N,\end{align*}
are the kernels of the linearized operator associated with \eqref{eq12}.

\medskip

Throughout this paper, we assume that $K(r)$ is a bounded radial potential function, and has a local maximum at $r_0$ satisfying
\begin{equation}\label{eq13}
K(r)=K(r_0)-c_0|r-r_0|^m+O(|r-r_0|^{m+\theta}),\ \ r\in(r_0-\delta,r_0+\delta),
\end{equation}
where $c_0>0,\delta>0$ are some constants, and
$m\in(\frac{N-2s}{2},N-2s)$. Without loss of generality, we may assume that $K(r_0)=1$.
\smallskip
For any integer $k>0$, by use of the transformation $u(y)=\mu^{-\frac{N-2s}{2}}u(\frac{|y|}{\mu})$, the equation \eqref{eq11} becomes
\begin{align}\label{eq14}
(-\Delta)^s u=  K\Big(\frac{|y|}{\mu}\Big)u^{\frac{N+2s}{N-2s}},\ \ u>0,\ \ y\in \R^N.
\end{align}

\medskip

Consider the $2k$ points
$$\overline{x}_{j}=r\Big(\sqrt{1-h^2}\cos\frac{2(j-1)\pi}{k},\sqrt{1-h^2}\sin\frac{2(j-1)\pi}{k},h,\textbf{0}\Big),\ \ \ \ j=1,\cdots,k,$$
$$\underline{x}_{j}=r\Big(\sqrt{1-h^2}\cos\frac{2(j-1)\pi}{k},\sqrt{1-h^2}\sin\frac{2(j-1)\pi}{k},-h,\textbf{0}\Big),\ \ j=1,\cdots,k,$$
where $\textbf{0}$ is the zero vector in $\R^{N-3}, r,h$ are positive parameters to be determined.
Define
\begin{align}
H_s&=\Big\{u:u\in D^{s}(\R^N),\ u\ \text{is}\ \text{even}\ \text{in}\ y_{l},\ l=2,4,5,\cdots,N,\nonumber\\
&\qquad u(\sqrt{y_{1}^2+y_{2}^2}\cos\theta',\sqrt{y_{1}^2+y_{2}^2}\sin\theta',y_{3},y'')\nonumber\\&
=u(\sqrt{y_{1}^2+y_{2}^2}\cos(\theta'+\frac{2j\pi}{k}),\sqrt{y_{1}^2+y_{2}^2}\sin(\theta'+\frac{2j\pi}{k}),y_{3},y'')\Big\},\nonumber
\end{align}
where $\theta'=\arctan\frac{y_{1}}{y_{2}}$.
We also denote the configuration space as
\begin{align}
\D=\Big\{(r,h,\Lambda):&r\in\Big[r_0\mu-\frac{1}{\mu^\theta}, r_0\mu+\frac{1}{\mu^\theta}\Big],h\in\Big[h_0-\frac{1}{\mu^\theta},h_0+\frac{1}{\mu^\theta}\Big],\nonumber\\
&\Lambda\in\Big[\Lambda_0-\frac{1}{\mu^{\frac{3}{2}\theta}}, \Lambda_0+\frac{1}{\mu^{\frac{3}{2}\theta}}\Big]\Big\}\label{eq15}
\end{align}
for\ some\  small $\theta>0$,
where $(r_0,h_0,\Lambda_0)$ is defined as in Section 3.

Define
\begin{align}\label{qe}
\mu=k^{\frac{N-2s}{N-2s-m}},h=O\Big(\frac{1}{k^\frac{N-2s-1}{N-2s+1}}\Big) \ \ \text{and} \ \ W_{r,h,\Lambda}=\sum_{i=1}^k U_{\overline{x}_{i},\Lambda}+\sum_{i=1}^k U_{\underline{x}_{i,\Lambda}}.
\end{align}
We construct $2k$ multi-bubbling  solutions concentrating at points lying on the upper
and lower circles of an oblate  cylinder.
\medskip

Precisely, our main results are as follows:

\begin{Thm}\label{th1}
Suppose that $s\in(0,1)$ for $N\geqslant 4$ and $s\in(0,\frac{1}{2})$ for $N=3$ . If $K(y)$ satisfies \eqref{eq13}, the problem $\eqref{eq14}$ has infinitely many positive solutions.
\end{Thm}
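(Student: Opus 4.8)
The plan is to prove Theorem \ref{th1} by a Lyapunov--Schmidt reduction, constructing, for each sufficiently large integer $k$, a $2k$-bubble solution of the rescaled equation \eqref{eq14} whose bubbles sit at the points $\overline{x}_j,\underline{x}_j$. Since distinct (large) $k$ yield distinct values of the scaling parameter $\mu = k^{\frac{N-2s}{N-2s-m}}$ and hence genuinely different solutions, the existence of infinitely many positive solutions of \eqref{eq11} follows. Throughout, the natural functional framework is the Caffarelli--Silvestre extension: work in the Hilbert space $H_s$ of functions invariant under the dihedral-type symmetries built into the configuration, lift everything to $\R^{N+1}_+$ via $\mathcal{P}_s$, and seek the solution in the form $u = W_{r,h,\Lambda} + \phi$, where $W_{r,h,\Lambda}$ is the sum of $2k$ Aubin--Talenti bubbles $U_{\overline{x}_i,\Lambda},U_{\underline{x}_i,\Lambda}$ and $\phi$ is a small error term orthogonal to the approximate kernel spanned by the $Z_0$- and $Z_i$-type functions attached to each bubble (only those directions compatible with the symmetry in $H_s$ survive, namely the $\Lambda$-derivative, the radial $r$-direction, and the $h$-direction).

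The first main step is the \emph{finite-dimensional reduction}: show that for $(r,h,\Lambda)$ in the configuration space $\D$ of \eqref{eq15} there is a unique $\phi = \phi_{r,h,\Lambda} \in H_s$, small in the appropriate weighted norm, solving the projected equation, with $\phi$ depending smoothly on the parameters and satisfying a quantitative estimate $\|\phi\| = O(\mu^{-1-\sigma})$ for some $\sigma>0$. This rests on (i) a careful estimate of the error $(-\Delta)^s W_{r,h,\Lambda} - K(|y|/\mu) W_{r,h,\Lambda}^{2^*_s-1}$ in the dual norm, exploiting the expansion \eqref{eq13} of $K$ near $r_0$, the smallness of $h$, and the decay of the bubbles together with the interaction estimates between different bubbles (the $k$ bubbles on the top circle, the $k$ on the bottom circle, and top--bottom interactions); and (ii) the invertibility, uniformly in $k$, of the linearized operator restricted to the space orthogonal to the approximate kernel — this is the standard non-degeneracy input for the single bubble \eqref{eq12}, transplanted to the multi-bubble setting via the symmetry, which prevents the would-be small eigenvalues associated with the translation/dilation modes. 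Then a contraction-mapping argument in the orthogonal complement closes this step.

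The second main step is the \emph{reduced variational problem}: plug $W_{r,h,\Lambda}+\phi_{r,h,\Lambda}$ into the energy functional associated to \eqref{eq14} and expand to get a function $F(r,h,\Lambda) = I(W_{r,h,\Lambda}) + \text{(higher order)}$ on $\D$. Using \eqref{eq13} and the choices in \eqref{qe}, one computes the leading terms: a constant times $k$ (the self-energy of the bubbles), a term $-c_0 k\, |r/\mu - r_0|^m \Lambda^{-m}$ from the potential, an attractive/repulsive term $+ c\, k \, (k/r)^{N-2s}\Lambda^{-(N-2s)}$-type contribution from the interaction of neighbouring bubbles on the same circle, and an $h$-dependent term measuring the top--bottom interaction of order $k\,(kh)^{-(N-2s)}$ or similar, all computed to sufficient precision with explicit error bounds. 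The point $(r_0,h_0,\Lambda_0)$ is chosen so that the leading-order part of $F$ has a strict local maximum (or a stable critical point) there; since $F$ is a $C^1$ perturbation of its leading part on the compact set $\D$, and the perturbation is small relative to the "depth" of the critical point, $F$ has a critical point $(r_k,h_k,\Lambda_k)$ in the interior of $\D$. Standard reduction theory then guarantees that $W_{r_k,h_k,\Lambda_k}+\phi_{r_k,h_k,\Lambda_k}$ is a genuine positive solution of \eqref{eq14} in $H_s$, hence of \eqref{eq11}.

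The main obstacle I anticipate is twofold and tied to the fractional, nonlocal nature of the operator: first, obtaining sharp enough interaction estimates between far-apart bubbles — for the fractional Laplacian the bubbles decay only polynomially, like $|y|^{-(N-2s)}$, so the mutual-interaction integrals are more delicate than in the local case and must be computed (via the extension) with errors small enough to detect the competition among the $r$, $h$, and $\Lambda$ terms that pins down $(r_0,h_0,\Lambda_0)$; and second, setting up the reduction on the extended half-space with the degenerate weight $t^{1-2s}$ while keeping all constants uniform in $k$, which forces one to track the dependence on $k$ throughout the linear theory. The restriction $m\in(\tfrac{N-2s}{2},N-2s)$ and $s\in(0,\tfrac12)$ when $N=3$ are exactly what make the potential term dominant enough, and the bubble decay integrable enough, for the expansion of $F$ to have the required structure; verifying that these ranges indeed produce a non-degenerate maximum of the leading-order energy is the crux of the argument.
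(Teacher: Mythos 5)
Your proposal follows essentially the same route as the paper: Theorem \ref{th1} is deduced from Theorem \ref{th2}, which is proved by precisely the Lyapunov--Schmidt reduction you describe (weighted-norm linear theory and contraction mapping in Section 2, energy expansion of the reduced functional $F(r,h,\Lambda)$ and location of a critical point in Section 3), with infinitely many solutions obtained from distinct large $k$. The one caveat is that the leading part of the reduced energy has a \emph{saddle}, not a local maximum, at $(r_0,h_0,\Lambda_0)$ --- it is a minimum in $r$ and a maximum in $\Lambda$, with the $h$-dependence entering at a still lower order in $\mu$ --- so the paper captures the critical point by a min-max/flow-invariance argument (Proposition 3.4 together with the level sets $\alpha_1,\alpha_2$) rather than by direct maximization; your hedge of a ``stable critical point'' covers this, but the local-maximum variant would not go through as stated.
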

Theorem \ref{th1} is directly shown by the following result.

\begin{Thm}\label{th2}
Under the assumptions of  Theorem \ref{th1}, there exists an integer $k_0$ such that for all integer $k\geqslant k_0$, the problem  \eqref{eq14} has a solution $u_k$ of the following form:
$$
u_k=W_{r_k,h_k,\Lambda_k}(y)+\omega_k(y),
$$
where $\omega_k\in H_{s},(r_k,h_k,\Lambda_k)\in \D$ and $\omega_k$ satisfies $\|\omega_k\|_{L^\infty}\to 0$ as $k\to\infty$.

\end{Thm}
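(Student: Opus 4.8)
The plan is to implement the Lyapunov--Schmidt reduction in the extended half-space setting of Caffarelli--Silvestre, following the now-standard scheme for prescribed-curvature problems with many bubbles, adapted to the oblate-cylinder configuration. First I would fix the ansatz $W_{r,h,\Lambda}$ from \eqref{qe} as a sum of $2k$ bubbles centred at $\overline{x}_j,\underline{x}_j$, work in the symmetric space $H_s$, and decompose any solution as $u=W_{r,h,\Lambda}+\omega$ with $\omega$ orthogonal (in the $\dot H^s$ inner product, equivalently in the weighted $L^2(t^{1-2s})$ energy on $\R^{N+1}_+$) to the approximate kernels generated by $\partial_\Lambda U_{\overline x_j,\Lambda}$, $\partial_\Lambda U_{\underline x_j,\Lambda}$ and the $\partial_r$, $\partial_h$ translations of the bubbles. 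One then writes the equation for $\omega$ as $L_k\omega = -E_k + N_k(\omega)$, where $L_k$ is the linearized operator at $W$, $E_k$ is the error $ (-\Delta)^s W - K(|y|/\mu) W^{2^*_s-1}$, and $N_k$ collects the superlinear remainder. The key analytic inputs are: (i) an invertibility estimate $\|L_k\omega\|_{**}\gtrsim \|\omega\|_*$ on the orthogonal complement, in suitably chosen weighted $L^\infty$ norms $\|\cdot\|_*,\|\cdot\|_{**}$ built from sums of translated standard bubbles with exponents tuned to the interaction rate; (ii) an estimate of the error $\|E_k\|_{**}$, which by \eqref{eq13} and the choice $\mu=k^{(N-2s)/(N-2s-m)}$ is of the order governed by the curvature expansion plus the bubble--bubble interactions along the circle and between the two circles; (iii) a contraction-mapping argument giving a unique small $\omega=\omega_{r,h,\Lambda}\in H_s$ solving the projected problem, smoothly dependent on the parameters, with $\|\omega\|_{L^\infty}\to 0$.

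The second half is the finite-dimensional reduction: substituting $\omega_{r,h,\Lambda}$ back, the full problem reduces to finding a critical point of the reduced energy $F(r,h,\Lambda):=I(W_{r,h,\Lambda}+\omega_{r,h,\Lambda})$ over the configuration set $\D$ from \eqref{eq15}. I would compute the asymptotic expansion of $F$ as $k\to\infty$: the leading term is $2k\,[A + \text{(curvature correction from }c_0|r/\mu-r_0|^m) + \text{(intra-circle interaction }\sim k\,(\tfrac{k}{\Lambda r})^{N-2s}) + \text{(inter-circle interaction }\sim (\Lambda r h)^{-(N-2s)})]$, plus lower-order terms controlled using $\|\omega\|_*$. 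The balance of the three competing terms is exactly what forces the scalings $\mu=k^{(N-2s)/(N-2s-m)}$ and $h=O(k^{-(N-2s-1)/(N-2s+1)})$ and produces a genuine local maximum point $(r_0,h_0,\Lambda_0)$ in the interior of $\D$; a degree/min-max argument (or direct maximization, since $\D$ is compact and the boundary values are strictly smaller) then yields $(r_k,h_k,\Lambda_k)\in\D$ at which $\nabla F=0$. Standard arguments show such a critical point gives an honest solution $u_k=W_{r_k,h_k,\Lambda_k}+\omega_k$ of \eqref{eq14}, and positivity follows since the negative part of $u_k$ has small energy and must vanish.

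The main obstacle, as usual in these "second bubbling direction" constructions (cf.\ \cite{DLP-MM-W} in the local case), is controlling the interaction between the upper and lower circles in the presence of the nonlocal operator: unlike the local Laplacian, $(-\Delta)^s W$ does not localize, so the error estimate (ii) and the expansion of $F$ require careful handling of the polynomial tails of the bubbles and of the extension kernel $\mathcal P_s$, and the weighted norms must be chosen so that the cross terms $\sum_{i\ne j}U_{\overline x_i}U_{\underline x_j}$ are captured at the right order $h^{-(N-2s)}$. A secondary difficulty is that the parameter $h$ is small (tending to $0$), so the two families of bubbles are only mildly separated in the $y_3$ direction; one must verify that the reduction norms still separate the approximate kernels and that the invertibility constant in (i) does not degenerate as $k\to\infty$. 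Once these weighted estimates are in place, the contraction argument and the energy expansion are routine, and Theorem \ref{th1} follows immediately from Theorem \ref{th2} by letting $k\to\infty$.
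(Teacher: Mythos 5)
Your proposal follows essentially the same route as the paper: the same weighted norms $\|\cdot\|_*$, $\|\cdot\|_{**}$ built from the $2k$ translated bubbles, an invertibility lemma for the projected linearized operator proved on the orthogonal complement of the kernels generated by $\partial_r$, $\partial_h$, $\partial_\Lambda$, estimates of the error and of the superlinear remainder, a contraction mapping giving $\omega_{r,h,\Lambda}$ with $\|\omega\|_*\lesssim\mu^{-m/2-\epsilon}$, and then an expansion of the reduced energy $F(r,h,\Lambda)$ whose competing terms force the scalings $\mu=k^{(N-2s)/(N-2s-m)}$ and $h_0=O(k^{-(N-2s-1)/(N-2s+1)})$.

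One point in your finite-dimensional step is off and would fail if pursued literally: the reduced critical point is \emph{not} a local maximum of $F$ on $\D$, so the parenthetical ``direct maximization, since $\D$ is compact and the boundary values are strictly smaller'' does not work. From the paper's expansion, $F$ contains $+\frac{A_2}{\Lambda^{m-2}\mu^m}(\mu r_0-r)^2$ and $+\frac{B_6}{\Lambda^{N-2s}\mu^{\cdots}}\bigl(1-\frac h{h_0}\bigr)^2$ with $A_2,B_6>0$, while the $\Lambda$-dependence $\frac{A_1}{\Lambda^m}-\frac{B_3}{\Lambda^{N-2s}}$ (with $m<N-2s$) is maximized at $\Lambda_0$; hence $(r,h,\Lambda)\mapsto F$ has a \emph{minimum} in $r$ and $h$ and a \emph{maximum} in $\Lambda$, i.e.\ a saddle. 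The paper therefore runs the Wei--Yan type min-max for $\bar F=-F$ over maps fixing the boundary $|r-\mu r_0|=\mu^{-\theta}$, and the essential ingredient is the flow-invariance proposition: using the signs of $\partial_\Lambda\bar F$ and $\partial_h\bar F$ on the corresponding faces of $\D$, the negative gradient flow cannot exit $\D$ in the $\Lambda$ and $h$ directions before reaching the level $\alpha_1$, while the $r$-boundary is handled by the energy level comparison. Your ``degree/min-max'' alternative is the correct one, but you should commit to it and supply the boundary sign analysis of $\partial_h F$ and $\partial_\Lambda F$ (the $h$-direction being the genuinely new feature relative to the single-circle case), since without it neither the deformation argument nor a degree count closes.
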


 Denote $$L_k\xi=(-\Delta)^s \xi-(2^*_s-1) K\Big(\frac{|y|}{\mu}\Big) u^{2_s^*-2}\xi.$$
  The non-degeneracy of $L_k$  makes it possible to glue two
   multiple bubbling solutions concentrated in two orthogonal subspaces together to construct infinitely many solutions of new type. More precisely, we have the following result.

\begin{Thm}\label{th3}
Assume $N\geqslant 3$, and $s\in(0,1)$, for $N\geqslant4$, $s\in(0,\frac12)$ for $N=3$. Suppose that $K(y)$ satisfies \eqref{eq13}.
Let $\xi\in H_s$ be a solution of $L_k\xi=0$. Then $\xi=0$.
\end{Thm}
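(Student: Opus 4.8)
The plan is to prove non-degeneracy by a contradiction argument combined with a careful blow-up analysis around each bubble, followed by a local Pohozaev identity to kill the remaining ``slow'' direction. Suppose $\xi\in H_s$ solves $L_k\xi=0$ with $\|\xi\|_{L^\infty}=1$ (after normalization). Working with the Caffarelli--Silvestre extension $\widetilde\xi$ on $\R^{N+1}_+$, one first establishes a pointwise decay estimate for $\xi$: away from the bubble centers $\overline x_j,\underline x_j$, the function $\xi$ is controlled by a sum of translated, decaying profiles, in the spirit of the weighted $L^\infty$ bounds used to set up the contraction mapping in Theorem~\ref{th2}. The key structural input is that $u_k = W_{r_k,h_k,\Lambda_k}+\omega_k$ with $\|\omega_k\|_{L^\infty}\to 0$, so $u_k^{2^*_s-2}$ concentrates near the $2k$ points and, by the symmetry encoded in $H_s$, it suffices to analyze a single bubble, say $U_{\overline x_1,\Lambda_k}$.

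Next I would rescale around $\overline x_1$: set $\xi_1(z)=\Lambda_k^{-\frac{N-2s}{2}}\xi\big(\overline x_1+\Lambda_k^{-1}z\big)$ and pass to the limit $k\to\infty$. The equation $L_k\xi=0$ converges (using $K(|y|/\mu)\to K(r_0)=1$ on the relevant scale, plus the interaction estimates between distinct bubbles, which are small because the mutual distances are large relative to $\Lambda_k^{-1}$) to the limiting linearized equation
\begin{align*}
(-\Delta)^s \xi_\infty = (2^*_s-1)\, U^{2^*_s-2}\,\xi_\infty \quad \text{in } \R^N .
\end{align*}
By the non-degeneracy of the standard bubble (the kernel being spanned by $Z_0,Z_1,\dots,Z_N$), we get $\xi_\infty=\sum_{i=0}^{N} c_i Z_i$. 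The symmetry constraints defining $H_s$ — evenness in $y_2,y_4,\dots,y_N$ and the $k$-fold rotational invariance in the $(y_1,y_2)$-plane — eliminate most of these: the translations in the $y_4,\dots,y_N$ directions and the ``angular'' translation are killed outright, leaving at most the dilation mode $Z_0$ and the combinations of $Z_1,Z_2,Z_3$ compatible with motion in the $r$ and $h$ variables. So $\xi$ is, to leading order, a linear combination of the derivatives of $W_{r,h,\Lambda}$ with respect to the three reduction parameters $r,h,\Lambda$; the orthogonality conditions in $H_s$ that were imposed on $\omega_k$ do not a priori apply to $\xi$, so this residual three-dimensional space is exactly what must be excluded.

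The final and hardest step is to show these three coefficients vanish, and this is where the Pohozaev identities enter. I would apply three independent local Pohozaev-type identities to $\xi$ on balls $B_d(\overline x_1)$ (and the corresponding lower-surface balls) of radius $d$ comparable to the inter-bubble spacing: one coming from the scaling vector field $z\cdot\nabla$ (detecting the $\Lambda$-mode), and two from the translation vector fields $\partial_{y_r}$ and $\partial_{y_3}$ adapted to the cylinder geometry (detecting the $r$- and $h$-modes). Each identity equates a boundary integral over $\partial B_d$ — which, using the decay estimate from the first step, contributes only a controlled error — with a bulk term involving $\nabla K(|y|/\mu)$ tested against $U^{2^*_s-1}\xi$; by the expansion \eqref{eq13} for $K$ near $r_0$ and the choice $\mu=k^{\frac{N-2s}{N-2s-m}}$, the leading bulk term is nondegenerate in the coefficient being probed (this is precisely where $m\in(\frac{N-2s}{2},N-2s)$ is used, matching the balance already exploited in Section 3). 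Comparing orders of magnitude forces each coefficient to be $o(1)$, hence $\xi\to 0$ in the rescaled pictures; combined with the global decay estimate this contradicts $\|\xi\|_{L^\infty}=1$. The main obstacle I anticipate is bookkeeping the error terms in the Pohozaev identities for the \emph{nonlocal} operator — the extension adds boundary integrals over $\{t>0\}$ slices and requires the weight $t^{1-2s}$ throughout — and ensuring the cross-bubble interaction terms in these identities are genuinely lower order than the $K$-driven main term; this is the step that is ``new in the study of the fractional problems'' and must be done with the extension carefully.
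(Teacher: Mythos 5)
Your proposal follows essentially the same route as the paper: a contradiction argument with a normalized kernel element, a translation/blow-up around $\overline x_1$ whose limit lies in the kernel of the linearized equation at the standard bubble, symmetry reducing the limit to the three modes $\psi_0,\psi_1,\psi_3$, and then three Pohozaev identities via the Caffarelli--Silvestre extension (two translational ones in $y_1$ and $y_3$, and one scaling identity) to kill the coefficients $b_1$, $b_3$, $b_0$ in turn. The only cosmetic differences are your extra rescaling by $\Lambda_k$ (unnecessary since $\Lambda_k$ stays bounded) and your choice of $L^\infty$ rather than the weighted $\|\cdot\|_*$ normalization.
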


\begin{Rem}
First of all, compared with the previous known results about the construction of the k-bubbling solution of the fractional equation, which concentrates on a circle, our solution obtained in this paper concentrates both on the upper and lower sides of an oblate cylinder. In this process, there is one more variable parameter, that is, the height $h$ of the cylinder, which makes the proof of the construction of the solution become more subtle.
In addition, we study the non-degeneracy of this type of bubbling solutions, which makes it possible to continue the gluing process to construct other new solutions concentrated symmetrically at arranged infinitely many points.
Moreover, in the process of using two kinds of Pohozaev identities to prove the non-degeneracy, the analysis of the main terms further shows great difference relative to the case without the parameter $h$.

Compared with the Laplacian, the nonlocal operators make it rather difficult to apply local Pohozaev identities, which is of great importance in the study of non-degeneracy and other properties of the solutions. By the aid of harmonic extension, we put two kinds of equivalent problems together, i.e., local and nonlocal ones,  and  delicately  handle  some integral terms induced by the extensions. The algebra decay at infinity of the approximate solutions, in stark contrast to the classical Laplacian, makes it crucial to give  accurate and precise estimates of the solutions.

\end{Rem}

This paper is organized as follows. In Section 2, we use the Lyapunov-Schmidt reduction procedure to get a finite dimensional setting. In section 3, the main results of the corresponding finite dimensional problems are obtained. Then in section 4, the non-degeneracy result
for the positive multi-bubbling solutions constructed in Theorem \ref{th2} is proved by use of the local Pohozaev
identities. In the Appendix, we give energy expressions and  some useful tools and estimates.

\medskip
\section{Preliminaries and the reduction framework}
Let
$$||u||_{*}=\sup_{y\in\mathbb{R}^{n}}\Big(\sum_{j=1}^k\frac{1}{(1+|y-\overline{x}_{j}|)^{\frac{N-2s}{2}+\tau}}+
\sum_{j=1}^k\frac{1}{(1+|y-\underline{x}_{j}|)^{\frac{N-2s}{2}+\tau}}\Big)^{-1}|u(y)|,$$
and
$$||f||_{**}=\sup_{y\in\mathbb{R}^{n}}\Big(\sum_{j=1}^k\frac{1}{(1+|y-\overline{x}_{j}|)^{\frac{N+2s}{2}+\tau}}+
\sum_{j=1}^k\frac{1}{(1+|y-\underline{x}_{j}|)^{\frac{N+2s}{2}+\tau}}\Big)^{-1}|f(y)|,$$
where $\frac{N-2s-m}{N-2s}<\tau<\min\{\frac{N+2s}{2}-\frac{4s}{N-2s},1+\epsilon\}$ and
$\epsilon>0$ is a small constant.
\smallskip

Denote
$$\overline{\Z}_{1,j}=\frac{\partial U_{\overline{x}_{j},\Lambda}}{\partial r}, \ \ \ \
\underline{\Z}_{1,j}=\frac{\partial U_{\underline{x}_{j},\Lambda}}{\partial r}  \ \
\text{for}\ j=1,\cdots,k,$$
$$\overline{\Z}_{2,j}=\frac{\partial U_{\overline{x}_{j},\Lambda}}{\partial h}, \ \ \ \
\underline{\Z}_{2,j}=\frac{\partial U_{\underline{x}_{j},\Lambda}}{\partial h}\ \
\text{for}\ j=1,\cdots,k, $$

$$\overline{\Z}_{3,j}=\frac{\partial U_{\overline{x}_{j},\Lambda}}{\partial \Lambda}, \ \ \ \
\underline{\Z}_{3,j}=\frac{\partial U_{\underline{x}_{j},\Lambda}}{\partial \Lambda}  \ \
\text{for}\ j=1,\cdots,k.$$

 Define 
\begin{equation}\label{e}
\E=\left\{v:v\in H_{s},\int_{\R^N} U_{\overline{x}_{j},\Lambda}^{2^*_s-2}\overline{\mathbb Z}_{l,j}v=0 \ \ \text{and}\ \int_{\R^N} U_{\underline{x}_{j},\Lambda}^{2^*_s-2}
\underline{\Z}_{l,j}v=0, \ j=1,\cdots,k,\ \ l=1,2,3\right\}.\end{equation}

Moreover, denote the splitting domains $\Omega_{j}$ for $j=1,\cdots,k$ as
$$\Omega_{j}:=\left\{ y=(y_{1},y_{2},y_{3},y'')\in\mathbb{R}^{3}\times\mathbb{R}^{N-3}:\Big\langle\frac{(y_{1},y_{2})}{|(y_{1},y_{2})|},
\Big(\cos\frac{2(j-1)\pi}{k},\sin\frac{2(j-1)\pi}{k}\Big)\Big\rangle_{\mathbb{R}^2}\geqslant\cos\frac{\pi}{k}\right\},$$
and we divide the $\Omega_{j}$ into upper and lower regions:
$$\Omega^{+}_{j}=\left\{y=(y_{1},y_{2},y_{3},y'')\in\Omega_{j},y_{3}\geqslant 0\right\},$$
$$\Omega^{-}_{j}=\left\{y=(y_{1},y_{2},y_{3},y'')\in\Omega_{j},y_{3}<0\right\}.$$

Obviously,
$$\R^N=\displaystyle\cup^k_{j=1}\Omega_j,\quad\Omega_j^+\cup\Omega_j^-$$
and
$$\Omega_j\cap\Omega_i=\varnothing,\quad
\Omega_j^+\cap\Omega_j^-=\varnothing,\quad \text{if}\ \ i\ne j.$$

Consider the following linearized problem:

\begin{equation}
\begin{cases}\label{21}
 \displaystyle(-\Delta)^s\phi_k-(2^*_s-1)K\Big(\frac{|y|}{\mu}\Big)W^{2^*_s-2}_{r,h,\Lambda}\phi_k=f_k+\sum_{i=1}^3c_i\Big(\sum_{j=1}^kU^{2^*_s-2}_{\overline{x}_{j,\Lambda}}\overline{\Z}_{i,j}+
 \sum_{j=1}^kU^{2^*_s-2}_{\underline{x}_{j,\Lambda}}\underline{\Z}_{i,j}\Big),\\
 \phi_k\in \E,
\end{cases}
\end{equation}
with some constants $c_i$.
\begin{Lem}\label{lem 21}
Assume that $\phi_k$ solves problem \eqref{21} for $f=f_k$. If  $||f_k||_{**}$ goes to zero as k goes to infinity, so does $||\phi_k||_*$.
\end{Lem}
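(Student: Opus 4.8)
The plan is to argue by contradiction following the standard Lyapunov--Schmidt blow-up scheme. Suppose the conclusion fails: there exist $k\to\infty$, functions $f_k$ with $\|f_k\|_{**}\to 0$, and solutions $\phi_k$ of \eqref{21} with $\|\phi_k\|_*=1$ (after normalization) and corresponding constants $c_i = c_i^{(k)}$. First I would derive the basic representation formula: since $(-\Delta)^s$ has the Green's representation with kernel behaving like $|y-z|^{-(N-2s)}$, one writes $\phi_k(y) = \int_{\R^N} G_s(y,z)\big[(2^*_s-1)K(|z|/\mu)W_{r,h,\Lambda}^{2^*_s-2}\phi_k + f_k + \sum_i c_i(\cdots)\big]\,dz$. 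The key technical input is the pointwise convolution estimate (presumably proved in the Appendix) that $\int \frac{1}{|y-z|^{N-2s}}\cdot\frac{1}{(1+|z-x_j|)^{\frac{N+2s}{2}+\tau}}\,dz \lesssim \frac{1}{(1+|y-x_j|)^{\frac{N-2s}{2}+\tau}}$, valid because of the constraint $\tau < \frac{N+2s}{2}-\frac{4s}{N-2s}$, together with the fact that $W_{r,h,\Lambda}^{2^*_s-2}$ decays like a sum of terms $(1+|z-x_j|)^{-4s}$ near each bubble. Combining these gives $\|\phi_k\|_* \lesssim o(1)\|\phi_k\|_* + \|f_k\|_{**} + \sum_i |c_i|$, so that everything reduces to controlling the Lagrange multipliers $c_i$.

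Next I would estimate the constants $c_i$. Testing equation \eqref{21} against each $\overline{\Z}_{l,j}$ (and $\underline{\Z}_{l,j}$), using the orthogonality $\phi_k\in\E$ and the near-orthogonality of the kernels $U_{x_j,\Lambda}^{2^*_s-2}\overline{\Z}_{l,j}$ associated to distinct bubbles and distinct indices $l$, one obtains a nearly diagonal linear system for the $c_i$. The diagonal entries $\int U^{2^*_s-2}|\overline{\Z}_{l,j}|^2$ are bounded below away from zero (after the appropriate normalization of $\overline{\Z}_{l,j}$ by powers of $\Lambda$, $r$, $h$), while the right-hand side contributions are $\|f_k\|_{**}\cdot\|\overline{\Z}_{l,j}\|_{\text{appropriate}}$ plus error terms of size $o(1)\|\phi_k\|_*$ coming from the interaction $\int (2^*_s-1)(K W^{2^*_s-2} - U_{x_j}^{2^*_s-2})\phi_k \overline{\Z}_{l,j}$ — here one uses that $K(|y|/\mu)\to K(r_0)=1$ on the relevant scale and that the cross terms between different bubbles are small (this is where $\mu=k^{(N-2s)/(N-2s-m)}$ and the separation of the $2k$ points enter). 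Solving the system yields $|c_i| \lesssim \|f_k\|_{**} + o(1)\|\phi_k\|_* = o(1)$.

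Feeding this back, $\|\phi_k\|_* \leqslant o(1)$, contradicting $\|\phi_k\|_*=1$; hence the lemma. The main obstacle I anticipate is not the abstract scheme but the care needed in the convolution/decay estimates: because the fractional kernel and the bubbles $U_{x,\Lambda}$ decay only algebraically (unlike the exponential/faster decay familiar from the Laplacian case), one must track the interaction between $k$ bubbles on the upper circle and $k$ on the lower circle, verify that the number-of-bubbles factor $k$ times the pairwise interaction still tends to zero, and confirm that the admissible range $\frac{N-2s-m}{N-2s}<\tau<\min\{\frac{N+2s}{2}-\frac{4s}{N-2s},1+\epsilon\}$ genuinely makes the weighted convolution bounded with the correct weight. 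A secondary delicate point is that one cannot directly conclude compactness/a limiting equation from $\|\phi_k\|_*=1$ as in bounded-domain problems; instead one argues that if the supremum in $\|\phi_k\|_*$ is attained (or nearly attained) near some bubble $x_j$, a rescaling $\tilde\phi_k(z)=\Lambda^{-\frac{N-2s}{2}}\phi_k(x_j+z/\Lambda)$ converges to a bounded solution of the limiting linearized equation $(-\Delta)^s\psi=(2^*_s-1)U^{2^*_s-2}\psi$ that is orthogonal to all the kernels $Z_0,\dots,Z_N$, forcing $\psi\equiv 0$ by non-degeneracy of the Aubin--Talenti bubble, while if the supremum is attained far from all bubbles a direct estimate gives smallness — either way a contradiction, which must be reconciled with the $c_i$-estimate argument above (the two are essentially the same argument organized differently, and I would present the contradiction version as it is cleanest for the fractional setting).
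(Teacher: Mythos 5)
Your proposal follows essentially the same route as the paper's proof: contradiction with the normalization $\|\phi_k\|_*=1$, the Green's representation combined with the weighted convolution lemmas, estimation of the multipliers $c_i$ by testing against $\overline{\Z}_{l,j}$, and finally a blow-up near a concentrating bubble $\overline{x}_j$ together with the non-degeneracy of the Aubin--Talenti bubble and the orthogonality constraints in $\E$. The one caveat is that your intermediate claim ``feeding this back, $\|\phi_k\|_*\leqslant o(1)$'' is premature---the estimate of the potential term only improves the decay exponent by $\theta$, which beats the weight only away from the points $\overline{x}_j,\underline{x}_j$, so no direct smallness of $\|\phi_k\|_*$ follows---but you correctly identify and supply the required blow-up step in your final paragraph, which is exactly how the paper closes the argument.
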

\begin{proof}
We prove it by contradiction. Assume that there exist $f_k$ with $||f_k||_{**}\to 0$ as $k\to\infty$, $(r_k,h_k,\Lambda_k)$ satisfies \eqref{eq15} and $\phi_k$ solves problem  \eqref{21} for $f=f_k, \Lambda=\Lambda_k, r=r_k, h=h_k$ with $||\phi_k||_{*}\geqslant c>0$. Without loss of generality, we always assume that $||\phi_k||_{*}\equiv1$. For the sake of convenience, we drop the subscript $k$.

Since \eqref{21}, we get
\begin{align*}
 \displaystyle \phi(y)&\leqslant C\int_{\R^N}\frac{1}{|z-y|^{N-2s}}K\Big(\frac{|z|}{\mu}\Big)W^{2^*_s-2}_{r,h,\Lambda}\phi(z)dz+\int_{\R^N}\frac{1}{|z-y|^{N-2s}}f(z)dz\nonumber\\
 &\quad+\int_{\R^N}\frac{1}{|z-y|^{N-2s}}\sum_{i=1}^3c_i\Big(\sum_{j=1}^kU^{2^*_s-2}_{\overline{x}_{j,\Lambda}}\overline{\Z}_{i,j}+
 \sum_{j=1}^kU^{2^*_s-2}_{\underline{x}_{j,\Lambda}}\underline{\Z}_{i,j}\Big)dz.
\end{align*}

For the first term, using Lemma \ref{lemA3}, we have
\begin{align*}
\Big|&\int_{\R^N}\frac{1}{|z-y|^{N-2s}}K\Big(\frac{|z|}{\mu}\Big)W^{2^*_s-2}_{r,h,\Lambda}\phi(z)dz\Big|\nonumber\\
\leqslant &||\phi||_*\int_{\R^N}\frac{1}{|z-y|^{N-2s}}W^{2^*_s-2}_{r,h,\Lambda}\sum_{j=1}^k\Big(\frac{1}{(1+|z-\overline{x}_{j}|)^{\frac{N-2s}{2}+\tau}}+
\frac{1}{(1+|z-\underline{x}_{j}|)^{\frac{N-2s}{2}+\tau}}\Big)dz\nonumber\\
\leqslant &||\phi||_*\sum_{j=1}^k\Big(\frac{1}{(1+|y-\overline{x}_{j}|)^{\frac{N-2s}{2}+\tau+\theta}}+
\frac{1}{(1+|y-\underline{x}_{j}|)^{\frac{N-2s}{2}+\tau+\theta}}\Big),
\end{align*}
and for the second term, from Lemma \ref{lemA2}, it holds that
\begin{align*}
\Big|&\int_{\R^N}\frac{1}{|z-y|^{N-2s}}f(z)dz\Big|\nonumber\\
\leqslant &||f||_{**}\int_{\R^N}\frac{1}{|z-y|^{N-2s}}\sum_{j=1}^k\Big(\frac{1}{(1+|z-\overline{x}_{j}|)^{\frac{N+2s}{2}+\tau}}+
\frac{1}{(1+|z-\underline{x}_{j}|)^{\frac{N+2s}{2}+\tau}}\Big)dz\nonumber\\
\leqslant &||f||_{**}\sum_{j=1}^k\Big(\frac{1}{(1+|y-\overline{x}_{j}|)^{\frac{N-2s}{2}+\tau}}+
\frac{1}{(1+|y-\underline{x}_{j}|)^{\frac{N-2s}{2}+\tau}}\Big).
\end{align*}

Since
\begin{align}\label{22}
|\overline{\Z}_{1,j}|\leqslant\frac{C}{(1+|y-\overline{x}_{j}|)^{N-2s}},\ |\overline{\Z}_{2,j}|\leqslant\frac{Cr}{(1+|y-\overline{x}_{j}|)^{N-2s}},\
|\overline{\Z}_{3,j}|\leqslant\frac{C}{(1+|y-\overline{x}_{j}|)^{N-2s}},\nonumber\\
|\underline{\Z}_{1,j}|\leqslant\frac{C}{(1+|y-\underline{x}_{j}|)^{N-2s}},\ |\underline{\Z}_{2,j}|\leqslant\frac{Cr}{(1+|y-\underline{x}_{j}|)^{N-2s}},\
|\underline{\Z}_{3,j}|\leqslant\frac{C}{(1+|y-\underline{x}_{j}|)^{N-2s}}.
\end{align}
Combing \eqref{22}, we  obtain,
\begin{align*}
&\quad\Big|\int_{\R^N}\frac{1}{|z-y|^{N-2s}}\Big(\sum_{j=1}^kU^{2^*_s-2}_{\overline{x}_{j,\Lambda}}\overline{\Z}_{i,j}+
 \sum_{j=1}^kU^{2^*_s-2}_{\underline{x}_{j,\Lambda}}\underline{\Z}_{i,j}\Big)dz\Big|\nonumber\\
&\leqslant C\sum_{j=1}^k\Big(\frac{1+r\pmb{\delta_{i2}}}{(1+|y-\overline{x}_{j}|)^{\frac{N-2s}{2}+\tau}}+
\frac{1+r\pmb{\delta_{i2}}}{(1+|y-\underline{x}_{j}|)^{\frac{N-2s}{2}+\tau}}\Big),
\end{align*}
where
\begin{equation*}
\pmb{\delta_{ij}}=
\begin{cases}
1,\ \ \ &\text{if} \ i=j,\\
0,\ \ \ &\text{if} \ i\neq j.
\end{cases}
\end{equation*}
Next we estimate $c_i, i=1,2,3$. Multiplying \eqref{21} by $\overline{\Z}_{i,1}$ and integrating, it shows that $c_i$ satisfies
\begin{align}\label{23}
&\quad\displaystyle\Big\langle(-\Delta)^s\phi-(2^*_s-1)K\Big(\frac{|y|}{\mu}\Big)W^{2^*_s-2}_{r,h,\Lambda}\phi,\overline{\Z}_{i,1}\Big\rangle-\Big\langle t,\overline{\Z}_{i,1}\Big\rangle\nonumber\\
&=\Big\langle\sum_{i=1}^3c_i\Big(\sum_{j=1}^kU^{2^*_s-2}_{\overline{x}_{j,\Lambda}}\overline{\Z}_{i,j}+
 \sum_{j=1}^kU^{2^*_s-2}_{\underline{x}_{j,\Lambda}}\underline{\Z}_{i,j}\Big),\overline{\Z}_{i,1}\Big\rangle.
\end{align}
where $\langle x,y\rangle=\int_{\R^N}xydz$. By using Lemma \ref{lemA1}, we have
\begin{align}\label{24}
\Big|\Big\langle f,\overline{\Z}_{i,1}\Big\rangle\Big|
&\leqslant ||f||_{**}\int_{\R^N}\frac{1+r\pmb{\delta_{i2}}}{(1+|z-\overline{x}_{1}|)^{N-2s}}\sum_{j=1}^k\Big(\frac{1}{(1+|y-\overline{x}_{j}|)^{\frac{N+2s}{2}+\tau}}+
\frac{1}{(1+|y-\underline{x}_{j}|)^{\frac{N+2s}{2}+\tau}}\Big)dz\nonumber\\
&\leqslant C(1+r\pmb{\delta_{i2}})||f||_{**}.
\end{align}

Next, we calculate
\begin{align*}
&\quad \Big\langle(-\Delta)^s\phi-(2^*_s-1)K\Big(\frac{|y|}{\mu}\Big)W^{2^*_s-2}_{r,h,\Lambda}\phi,\overline{\Z}_{i,1}\Big\rangle\nonumber\\
&=\Big\langle(-\Delta)^s\overline{\Z}_{i,1}-(2^*_s-1)K\Big(\frac{|y|}{\mu}\Big)W^{2^*_s-2}_{r,h,\Lambda}\overline{\Z}_{i,1},\phi\Big\rangle\nonumber\\
&=(2^*_s-1)\Big\langle(U^{2^*_s-2}_{\overline{x}_{1,\Lambda}}-W^{2^*_s-2}_{r,h,\Lambda})\overline{\Z}_{i,1},
\phi\Big\rangle-(2^*_s-1)\Big\langle\Big[K\Big(\frac{|y|}{\mu}\Big)-1\Big]W^{2^*_s-2}_{r,h,\Lambda}\overline{\Z}_{i,1},\phi\Big\rangle.\nonumber\\
&:=H_1-H_2.
\end{align*}
For  $H_1$, it holds that
\begin{align}\label{25}
H_1&\leqslant C\|\phi\|_*\int_{\R^N}(U^{2^*_s-2}_{\overline{x}_{1,\Lambda}}-W^{2^*_s-2}_{r,h,\Lambda})\frac{1+r\pmb{\delta_{i2}}}{(1+|z-\overline{x}_{1}|)^{N-2s}}\nonumber\\
&\quad\times\sum_{j=1}^k\Big(\frac{1}{(1+|y-\overline{x}_{j}|)^{\frac{N-2s}{2}+\tau}}+
\frac{1}{(1+|y-\underline{x}_{j}|)^{\frac{N-2s}{2}+\tau}}\Big)dz\nonumber\\
&\leqslant C\|\phi\|_*\frac{1+r\pmb{\delta_{i2}}}{\mu^\sigma}.
\end{align}
Using the condition of $K(|y|)$, we  obtain $H_2$,
\begin{align}\label{26}
H_2&\leqslant C\|\phi\|_*\int_{\R^N}\Big[K\Big(\frac{|y|}{\mu}\Big)-1\Big]W^{2^*_s-2}_{r,h,\Lambda}\overline{\Z}_{i,1}
\sum_{j=1}^k\Big(\frac{1}{(1+|y-\overline{x}_{j}|)^{\frac{N-2s}{2}+\tau}}+
\frac{1}{(1+|y-\underline{x}_{j}|)^{\frac{N-2s}{2}+\tau}}\Big)dz\nonumber\\
&\leqslant C\|\phi\|_*\int_{\{\Big||y|-\mu r_0\Big|\leqslant\sqrt{\mu}\}}\Big[K\Big(\frac{|y|}{\mu}\Big)-1\Big]W^{2^*_s-2}_{r,h,\Lambda}\overline{\Z}_{i,1}
\sum_{j=1}^k\Big(\frac{1}{(1+|y-\overline{x}_{j}|)^{\frac{N-2s}{2}+\tau}}+
\frac{1}{(1+|y-\underline{x}_{j}|)^{\frac{N-2s}{2}+\tau}}\Big)dz\nonumber\\
&\quad+ C\|\phi\|_*\int_{\{\Big||y|-\mu r_0\Big|>\sqrt{\mu}\}}\Big[K\Big(\frac{|y|}{\mu}\Big)-1\Big]W^{2^*_s-2}_{r,h,\Lambda}\overline{\Z}_{i,1}
\sum_{j=1}^k\Big(\frac{1}{(1+|y-\overline{x}_{j}|)^{\frac{N-2s}{2}+\tau}}+
\frac{1}{(1+|y-\underline{x}_{j}|)^{\frac{N-2s}{2}+\tau}}\Big)dz\nonumber\\
&\leqslant C\frac{\|\phi\|_*}{\mu^{\frac{m}{2}}}\int_{\{\Big||y|-\mu r_0\Big|\leqslant\sqrt{\mu}\}}\Big[K\Big(\frac{|y|}{\mu}\Big)-1\Big]W^{2^*_s-2}_{r,h,\Lambda}\frac{1+r\pmb{\delta_{i2}}}{(1+|z-\overline{x}_{1}|)^{N-2s}}
\sum_{j=1}^k\frac{1}{(1+|y-\overline{x}_{j}|)^{\frac{N-2s}{2}+\tau}}dz\nonumber\\
&\quad+ C\frac{\|\phi\|_*}{\mu^{\sigma}}\int_{\{\Big||y|-\mu r_0\Big|>\sqrt{\mu}\}}\Big[K\Big(\frac{|y|}{\mu}\Big)-1\Big]W^{2^*_s-2}_{r,h,\Lambda}\frac{1+r\pmb{\delta_{i2}}}{(1+|z-\overline{x}_{1}|)^{N-2s}}
\sum_{j=1}^k\frac{1}{(1+|y-\overline{x}_{j}|)^{\frac{N-2s}{2}+\tau-2\sigma}})dz\nonumber\\
&\leqslant C\|\phi\|_*\frac{1+r\pmb{\delta_{i2}}}{\mu^\sigma}.
\end{align}
By using \eqref{25} and \eqref{26}, we   get
\begin{align}\label{27}
\Big\langle(-\Delta)^s\phi-(2^*_s-1)K\Big(\frac{|y|}{\mu}\Big)W^{2^*_s-2}_{r,h,\Lambda}\phi,\overline{\Z}_{i,1}\Big\rangle=C\|\phi\|_*\frac{1+r\pmb{\delta_{i2}}}{\mu^\sigma}.
\end{align}
However, there is a constant $c>0$ satisfies
\begin{align}\label{28}
\Big\langle\Big(\sum_{j=1}^kU^{2^*_s-2}_{\overline{x}_{j,\Lambda}}\overline{\Z}_{t,j}+
 \sum_{j=1}^kU^{2^*_s-2}_{\underline{x}_{j,\Lambda}}\underline{\Z}_{t,j}\Big),\overline{\Z}_{i,1}\Big\rangle=\overline{c}_t\delta_{ti}(1+\pmb{\delta_{i2}}r^2),
 \end{align}
where $\overline{c}_t$ is a constant.
So, we  find that by substituting \eqref{24}, \eqref{27} and \eqref{28} into equation \eqref{23},
$$c_i=\frac{1+r\pmb{\delta_{i2}}}{1+r^2\pmb{\delta_{i2}}}O\Big(\frac{1}{\mu^{\sigma}}||\phi||_*+||f||_{**}\Big)=o(1).$$

Thus,
\begin{align}\label{29}
\displaystyle||\phi||_*\leqslant\Big(||f||_{**}+\frac{\displaystyle\sum_{j=1}^k\Big(\frac{1}{(1+|y-\overline{x}_{j}|)^{\frac{N-2s}{2}+\tau+\theta}}+
\frac{1}{(1+|y-\underline{x}_{j}|)^{\frac{N-2s}{2}+\tau+\theta}}\Big)}{\displaystyle\sum_{j=1}^k\Big(\frac{1}{(1+|y-\overline{x}_{j}|)^{\frac{N-2s}{2}+\tau}}+
\frac{1}{(1+|y-\underline{x}_{j}|)^{\frac{N-2s}{2}+\tau}}\Big)}\Big).
\end{align}
Since $||\phi||_*=1$, we find that there is $R>0$ from \eqref{29}, such that
\begin{align}\label{210}
\displaystyle||\phi||_{L^\infty(B_{R}(\overline{x}_{i}))}\geqslant a>0,
\end{align}
for some $\overline{x}_{i}$. But $\overline{\phi}(y)=\phi(y-\overline{x}_{i})$ converges uniformly in any compact set to a solution $u$ of
\begin{align}\label{211}
(-\Delta)^s u-(2^*_s-1)U_{0,\Lambda}^{2^*_s-2}u=0,\ \text{in} \ \R^N,
\end{align}
for some $\Lambda\in[L_1,L_2]$, $u$ must be a linear combination of the functions
$$\frac{\partial U_{0,\Lambda}}{\partial \Lambda}\Big\arrowvert_{\Lambda=1},\ \ \frac{\partial U_{0,1}}{\partial y_1},\ \ \ \frac{\partial U_{0,1}}{\partial y_3}.$$

However $u$ is perpendicular to the kernel of \eqref{211}. So, $u=0$, which is in conflict with \eqref{210}.

\end{proof}

From Lemma \ref{lem 21}, using the same argument as the proof for Proposition 2.2 in \cite{DLP-MM-W}, we  prove the following result:
\begin{Lem}\label{lem 22}
There exist $k_0>0$ and a constant $C>0$, independent of k, such that for all $k\geqslant k_0$ and all $f\in L^{\infty}(\R^N)$, problem \eqref{21} has a unique solution $\phi\equiv \mathbf{L}_k(f)$. Furthermore,
\begin{align}\label{212}
\|\mathbf{L}_k(f)\|_*\leqslant C||f||_{**},\ \ |c_i|\leqslant \frac{C}{1+r\pmb{\delta_{i2}}}||f||_{**},\quad i=1,2,3.
\end{align}
\end{Lem}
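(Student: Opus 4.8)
The plan is to deduce this from the a priori estimate of Lemma \ref{lem 21} by a Fredholm-alternative argument, carried out in the Hilbert space furnished by the Caffarelli--Silvestre extension (equivalently, one may work directly with $\dot H^s(\R^N)$ and the inverse operator of $(-\Delta)^s$). First I would note that testing \eqref{21} against functions in $\E$ annihilates the term $\sum_{i=1}^{3}c_i(\cdots)$, since by the very definition of $\E$ every $\psi\in\E$ is $L^2$-orthogonal to all $U_{\overline{x}_{j},\Lambda}^{2^*_s-2}\overline{\Z}_{i,j}$ and $U_{\underline{x}_{j},\Lambda}^{2^*_s-2}\underline{\Z}_{i,j}$. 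Hence, leaving the $c_i$ free, problem \eqref{21} is equivalent to finding $\phi\in\E$ with
\begin{equation*}
\langle\phi,\psi\rangle_{\dot H^s}-(2^*_s-1)\int_{\R^N}K\Big(\tfrac{|y|}{\mu}\Big)W_{r,h,\Lambda}^{2^*_s-2}\phi\psi\,dy=\int_{\R^N}f\psi\,dy\qquad\text{for all }\psi\in\E,
\end{equation*}
and by the Riesz representation theorem this is equivalent to $(\mathrm{Id}-\mathcal T_k)\phi=\overline f_k$ in $\E$, where $\mathcal T_k\phi,\overline f_k\in\E$ are determined by $\langle\mathcal T_k\phi,\psi\rangle_{\dot H^s}=(2^*_s-1)\int_{\R^N}KW^{2^*_s-2}_{r,h,\Lambda}\phi\psi$ and $\langle\overline f_k,\psi\rangle_{\dot H^s}=\int_{\R^N}f\psi$ for all $\psi\in\E$.

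The next step is to check that $\mathcal T_k$ is compact. Since each bubble $U_{x,\Lambda}$ decays like $|y|^{-(N-2s)}$, the potential $W_{r,h,\Lambda}^{2^*_s-2}$ lies in $L^{N/(2s)}(\R^N)$ with tails tending to $0$ at infinity; truncating it to a large ball $B_R$ produces a bounded, compactly supported multiplier whose multiplication operator is compact on $\E$ thanks to the compact embedding $\dot H^s\hookrightarrow L^2(B_R)$. Letting $R\to\infty$ exhibits $\mathcal T_k$ as an operator-norm limit of compact operators, hence compact; composing with the orthogonal projection onto $\E$ does not affect this.

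By the Fredholm alternative, $\mathrm{Id}-\mathcal T_k$ is invertible precisely when its kernel is trivial, and this gives the existence and uniqueness of $\phi\equiv\mathbf L_k(f)$. To obtain $\ker(\mathrm{Id}-\mathcal T_k)=\{0\}$ for all large $k$ I would argue by contradiction: if along some sequence $k_j\to\infty$ there were $\phi_{k_j}\in\E\setminus\{0\}$ in the kernel, normalize so that $\|\phi_{k_j}\|_*=1$; these functions solve \eqref{21} with $f\equiv0$, so $\|f_{k_j}\|_{**}\to0$, and Lemma \ref{lem 21} then forces $\|\phi_{k_j}\|_*\to0$, a contradiction. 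This produces the integer $k_0$ in the statement.

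It remains to establish the bounds \eqref{212}. If $\|\mathbf L_k(f)\|_*\le C\|f\|_{**}$ failed uniformly for $k\ge k_0$, I would extract $k_j\to\infty$ and $f_{k_j}$ with $\|\mathbf L_{k_j}(f_{k_j})\|_*=1$ but $\|f_{k_j}\|_{**}\to0$, contradicting Lemma \ref{lem 21} again. The estimate for $c_i$ is then read off from the computation already performed inside the proof of Lemma \ref{lem 21}: recovering $c_i$ by testing \eqref{21} against $\overline{\Z}_{i,1}$ and invoking the near-orthogonality relation \eqref{28} gives $c_i=\tfrac{1+r\pmb{\delta_{i2}}}{1+r^2\pmb{\delta_{i2}}}\,O(\mu^{-\sigma}\|\phi\|_*+\|f\|_{**})$, and substituting $\|\phi\|_*\le C\|f\|_{**}$ together with $r\sim r_0\mu\to\infty$ yields $|c_i|\le\frac{C}{1+r\pmb{\delta_{i2}}}\|f\|_{**}$. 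The one genuinely delicate point is the compactness step: at the critical exponent the global Sobolev embedding is not compact, so one has to exploit the decay of $W^{2^*_s-2}$ through the truncation argument; everything else is routine Lyapunov--Schmidt bookkeeping, which is why one may simply follow the analogous Proposition 2.2 of \cite{DLP-MM-W}.
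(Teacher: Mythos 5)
Your argument is correct and is essentially the same as the one the paper invokes: the paper gives no details here, simply deferring to the standard reduction argument of Proposition~2.2 in \cite{DLP-MM-W}, which is precisely your scheme of projecting onto $\E$, writing the projected problem as $(\mathrm{Id}-\mathcal T_k)\phi=\overline f_k$ with $\mathcal T_k$ compact, killing the kernel for large $k$ via the a priori estimate of Lemma~\ref{lem 21}, and reading off the bounds on $\|\phi\|_*$ and $c_i$ from the same contradiction argument and from \eqref{28}. The only point worth making explicit is the converse direction of your ``equivalence'': having solved the projected equation, one recovers the constants $c_i$ because the $\dot H^s$-orthogonal complement of $\E$ inside $H_s$ is spanned by $(-\Delta)^{-s}$ applied to the symmetrized combinations $\sum_j\bigl(U^{2^*_s-2}_{\overline{x}_{j},\Lambda}\overline{\Z}_{i,j}+U^{2^*_s-2}_{\underline{x}_{j},\Lambda}\underline{\Z}_{i,j}\bigr)$, $i=1,2,3$; this is routine but should be stated.
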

\medskip

Now, we consider the following problem \eqref{213},
\begin{equation}
\begin{cases}\label{213}
 \displaystyle(-\Delta)^s(W_{r,h,\Lambda}+\phi)=K\Big(\frac{|y|}{\mu}\Big)(W_{r,h,\Lambda}+\phi)^{2^*_s-1}+\sum_{i=1}^3c_i\Big(\sum_{j=1}^kU^{2^*_s-2}_{\overline{x}_{j,\Lambda}}\overline{\Z}_{i,j}+
 \sum_{j=1}^kU^{2^*_s-2}_{\underline{x}_{j,\Lambda}}\underline{\Z}_{i,j}\Big),\\
 \phi\in \E.
\end{cases}
\end{equation}
where $\E$ is as \eqref{e}.

Rewrite above equation \eqref{213} as
\begin{equation}
\begin{cases}\label{214}
 \displaystyle(-\Delta)^s\phi-(2^*_s-1)K\Big(\frac{|y|}{\mu}\Big)W_{r,h,\Lambda}^{2^*_s-2}\phi=N(\phi)+l_k+\sum_{i=1}^3c_i\Big(\sum_{j=1}^kU^{2^*_s-2}_{\overline{x}_{j,\Lambda}}\overline{\Z}_{i,j}+
 \sum_{j=1}^kU^{2^*_s-2}_{\underline{x}_{j,\Lambda}}\underline{\Z}_{i,j}\Big),\\
 \phi\in\E.
\end{cases}
\end{equation}
where
$$ N(\phi)=K\Big(\frac{|y|}{\mu}\Big)\Big((W_{r,h,\Lambda}+\phi)^{2^*_s-1}-W_{r,h,\Lambda}^{2^*_s-1}-(2^*_s-1)W_{r,h,\Lambda}^{2^*_s-2}\phi\Big),$$
and
\begin{equation}\label{215}
l_k=K\Big(\frac{|y|}{\mu}\Big)W_{r,h,\Lambda}^{2^*_s-1}-\Big(\sum_{j=1}^kU^{2^*_s-1}_{\overline{x}_{j,\Lambda}}+
 \sum_{j=1}^kU^{2^*_s-1}_{\underline{x}_{j,\Lambda}}\Big).
\end{equation}

In order to prove that \eqref{214} is uniquely solvable  using the contraction mapping Theorem, we need to estimate $N(\phi)$ and $l_k$.

 \begin{Lem}\label{lem 23}
If $s\in(0,1)$ for $N\geqslant 4$ and $s\in(0,\frac{1}{2})$ for $N=3$, and $||\phi||_*\leqslant 1$, then
$$||N(\phi)||_{**}\leqslant C||\phi||_*^{\min(2^*_s-1,2)}.$$
\end{Lem}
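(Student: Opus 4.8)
\textbf{Proof proposal for Lemma \ref{lem 23}.}

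The plan is to estimate $N(\phi)$ pointwise and then translate the pointwise bound into a bound on the $\|\cdot\|_{**}$ norm, using the weight functions that define that norm. Recall that
$$N(\phi)=K\Big(\tfrac{|y|}{\mu}\Big)\Big((W_{r,h,\Lambda}+\phi)^{2^*_s-1}-W_{r,h,\Lambda}^{2^*_s-1}-(2^*_s-1)W_{r,h,\Lambda}^{2^*_s-2}\phi\Big),$$
so the elementary inequality
$$\big|(a+b)^{p}-a^{p}-pa^{p-1}b\big|\leqslant C\begin{cases}|b|^{p}, & 1<p\leqslant 2,\\ a^{p-2}b^2+|b|^{p}, & p>2,\end{cases}$$
applied with $p=2^*_s-1$, $a=W_{r,h,\Lambda}$, $b=\phi$, and the boundedness of $K$, gives the pointwise estimate $|N(\phi)|\leqslant C|\phi|^{\min(2^*_s-1,2)}$ in the subcritical-exponent regime $N\geqslant 6$ (where $2^*_s-1\leqslant 2$), and $|N(\phi)|\leqslant C\big(W_{r,h,\Lambda}^{2^*_s-3}|\phi|^2+|\phi|^{2^*_s-1}\big)$ when $2^*_s-1>2$. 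The careful case distinction here is exactly the source of the $\min(2^*_s-1,2)$ in the statement.

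Next I would insert the bound $|\phi(y)|\leqslant\|\phi\|_*\,\Theta(y)$, where
$$\Theta(y):=\sum_{j=1}^k\frac{1}{(1+|y-\overline x_j|)^{\frac{N-2s}{2}+\tau}}+\sum_{j=1}^k\frac{1}{(1+|y-\underline x_j|)^{\frac{N-2s}{2}+\tau}},$$
and compare $|N(\phi)|$ against the target weight $\Lambda_{**}(y):=\sum_{j=1}^k(1+|y-\overline x_j|)^{-\frac{N+2s}{2}-\tau}+\sum_{j=1}^k(1+|y-\underline x_j|)^{-\frac{N+2s}{2}-\tau}$ appearing in $\|\cdot\|_{**}$. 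For the pure power term one must show
$$\Theta(y)^{\min(2^*_s-1,2)}\leqslant C\,\Lambda_{**}(y),$$
which, after reducing (by the symmetry and the standard "one bubble dominates" splitting over the regions $\Omega_j^\pm$) to a single bubble and using Lemma \ref{lemA3}-type estimates on sums of translated powers, amounts to checking the exponent inequality $\big(\tfrac{N-2s}{2}+\tau\big)\min(2^*_s-1,2)\geqslant \tfrac{N+2s}{2}+\tau$; this is where the hypothesis on $s$ (and the admissible range of $\tau$ fixed in Section 2) is used. For the mixed term $W_{r,h,\Lambda}^{2^*_s-3}|\phi|^2$ in the case $2^*_s-1>2$, I would bound $W_{r,h,\Lambda}^{2^*_s-3}$ by $C\sum_j(1+|y-\overline x_j|)^{-(2^*_s-3)\frac{N-2s}{2}}+\cdots$ and again collapse to a single bubble via Lemma \ref{lemA3}, checking that the resulting exponent dominates $\tfrac{N+2s}{2}+\tau$.

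The main obstacle I anticipate is not any single inequality but the bookkeeping when summing the interaction terms: after expanding $\Theta(y)^2$ (or $\Theta(y)^{2^*_s-1}$) one gets diagonal terms $(1+|y-\overline x_j|)^{-(N-2s)-2\tau}$ and many cross terms $(1+|y-\overline x_i|)^{-\frac{N-2s}{2}-\tau}(1+|y-\overline x_j|)^{-\frac{N-2s}{2}-\tau}$ with $i\neq j$; one must show the cross terms, after being summed over $i$, are absorbed into $C\,\Lambda_{**}$ uniformly in $k$, which relies on the fact that the points $\overline x_j,\underline x_j$ are separated by a distance comparable to $\mu/k$ and on the choice $\tau>\frac{N-2s-m}{N-2s}$ ensuring the geometric-type series $\sum_{i\neq j}|\overline x_i-\overline x_j|^{-(\frac{N-2s}{2}+\tau)}$-type quantities converge with a $k$-independent bound. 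This is the standard but delicate estimate underlying all Lyapunov–Schmidt constructions of this kind; once it is in place, combining it with the pointwise bound on $N(\phi)$ and taking the supremum against $\Lambda_{**}^{-1}$ yields $\|N(\phi)\|_{**}\leqslant C\|\phi\|_*^{\min(2^*_s-1,2)}$ as claimed.
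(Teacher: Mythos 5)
Your proposal is correct and is exactly the standard argument that the paper itself omits (it only says ``the proof is similar to \cite{YJ}''): Taylor-expand the nonlinearity with the case split at $2^*_s-1=2$, insert $|\phi|\leqslant\|\phi\|_*\Theta$, and convert $\Theta$-powers into the $\|\cdot\|_{**}$ weight via Lemma \ref{lemA1} and the uniform-in-$k$ summability of $\sum_{i\neq j}|\overline x_i-\overline x_j|^{-\sigma}$. One small bookkeeping point: in the regime $2^*_s-1>2$ the condition you state for ``the pure power term,'' namely $\bigl(\tfrac{N-2s}{2}+\tau\bigr)\min(2^*_s-1,2)\geqslant\tfrac{N+2s}{2}+\tau$ with $\min=2$, reduces to $\tau\geqslant 3s-\tfrac N2$ and need not hold for the admissible $\tau$; but this is harmless because the exponent-$2$ contribution there is the mixed term $W_{r,h,\Lambda}^{2^*_s-3}|\phi|^2$, whose extra factor $W^{2^*_s-3}\lesssim(1+|y-\overline x_j|)^{-(6s-N)}$ supplies precisely the missing decay (the genuine pure power term always carries the full exponent $2^*_s-1$, for which the inequality is automatic), and your separate treatment of the mixed term already handles this correctly.
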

\begin{proof}
Since the proof is similar to the \cite{YJ}, we omit it here.
\end{proof}
Next, we estimate $l_k$.
\begin{Lem}\label{lem 24}
If $s\in(0,1)$ for $N\geqslant 4$ and $s\in(0,\frac{1}{2})$ for $N=3$, then there is a small $\epsilon>0$, such that
$$||l_k||_{**}\leqslant C\Big(\frac{1}{\mu}\Big)^{\frac{m}{2}+\epsilon}.$$
\end{Lem}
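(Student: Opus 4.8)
The plan is to estimate $l_k$ in the $\|\cdot\|_{**}$-norm by splitting it into two pieces: the ``potential error'' coming from replacing $K(|y|/\mu)$ by $1$, and the ``interaction error'' coming from the fact that a sum of bubbles is not an exact solution. Write
\begin{align*}
l_k&=\Big(K\Big(\tfrac{|y|}{\mu}\Big)-1\Big)W_{r,h,\Lambda}^{2^*_s-1}
+\Big(W_{r,h,\Lambda}^{2^*_s-1}-\sum_{j=1}^kU_{\overline x_j,\Lambda}^{2^*_s-1}-\sum_{j=1}^kU_{\underline x_j,\Lambda}^{2^*_s-1}\Big)\\
&=:l_k^{(1)}+l_k^{(2)},
\end{align*}
and bound $\|l_k^{(1)}\|_{**}$ and $\|l_k^{(2)}\|_{**}$ separately.

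For $l_k^{(1)}$, I would work in the region near one bubble, say $\Omega_1^+$ close to $\overline x_1$. Using the symmetry it suffices to estimate $(1+|y-\overline x_1|)^{\frac{N+2s}{2}+\tau}|l_k^{(1)}(y)|$ there. On $\Omega_1^+$ the term $W_{r,h,\Lambda}^{2^*_s-1}$ is comparable to $U_{\overline x_1,\Lambda}^{2^*_s-1}$, which decays like $(1+|y-\overline x_1|)^{-(N+2s)}$. The expansion \eqref{eq13} of $K$ gives $|K(|y|/\mu)-1|\le C(|r(y)-r_0\mu|/\mu)^m$ where $r(y)=|y|$ in the relevant variables; since $|\,|y|-r_0\mu\,|$ is controlled by $|y-\overline x_1|$ (up to the fixed distance $r_0\mu(1-\sqrt{1-h^2})$ and $r_0\mu h$, both of which are $O(\mu h)=o(\mu)$), one gets a factor $\mu^{-m}(1+|y-\overline x_1|)^m$. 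Matching powers, the worst contribution is of order $\mu^{-m}(1+|y-\overline x_1|)^{m-(N+2s)+\frac{N+2s}{2}+\tau}=\mu^{-m}(1+|y-\overline x_1|)^{m-\frac{N+2s}{2}+\tau}$. Because $m<N-2s<\frac{N+2s}{2}$ is false in general, one instead localizes: on $\{|\,|y|-r_0\mu|\le\sqrt\mu\}$ the factor $(|r-r_0|)^m$ is at most $\mu^{-m/2}$, giving $\mu^{-m/2}$ directly, while on the complement the rapid decay of $W^{2^*_s-1}$ (a negative power of $\sqrt\mu$ as large as we like) beats the polynomial growth, as in the estimate of $H_2$ in the proof of Lemma \ref{lem 21}. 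Choosing $\epsilon>0$ small this yields $\|l_k^{(1)}\|_{**}\le C\mu^{-m/2-\epsilon}$, using that $\tau>\frac{N-2s-m}{N-2s}$ so that there is a genuine gain; one also uses $h=O(k^{-(N-2s-1)/(N-2s+1)})$ to absorb the displacement of the centers from the sphere $|y|=r_0\mu$.

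For $l_k^{(2)}$, I would use the standard interaction estimate for critical bubbles: on $\Omega_1^+$,
\[
\Big|W_{r,h,\Lambda}^{2^*_s-1}-\sum_{j}U_{\overline x_j,\Lambda}^{2^*_s-1}-\sum_j U_{\underline x_j,\Lambda}^{2^*_s-1}\Big|
\le C\,U_{\overline x_1,\Lambda}^{2^*_s-2}\sum_{(l,\pm)\ne(1,+)}U_{x_l^{\pm},\Lambda}+ (\text{lower order}),
\]
and then invoke the lemmas in the Appendix (the analogues of Lemma \ref{lemA1}--\ref{lemA3}) that convert sums of the form $\sum_{j\ge 2}(1+|\overline x_1-\overline x_j|)^{-a}$ into powers of $1/\mu$, using the minimal inter-bubble distance $|\overline x_1-\overline x_2|\sim r_0\mu/k\sim \mu^{1-\frac{N-2s-m}{N-2s}}=\mu^{m/(N-2s)}$ on the upper circle and $|\overline x_1-\underline x_1|\sim 2r_0\mu h$ between the two circles. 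With the choice \eqref{qe} of $\mu,h$ one checks that the resulting bound is again $\le C\mu^{-m/2-\epsilon}$ for some small $\epsilon$ (in fact the upper/lower interaction, which is the new feature compared with the single-circle case, is governed by the size of $h$ and is arranged to be of the same order by the choice of $h$). Summing the two bounds gives the claim.

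The main obstacle is the bookkeeping in $l_k^{(1)}$: one must keep careful track of three different scales — the bubble scale $O(1)$, the inter-bubble scale $O(\mu^{m/(N-2s)})$ and the distance $O(\mu h)$ from the sphere where $K$ is maximal — and verify that the constraint $m\in(\frac{N-2s}{2},N-2s)$ together with the window for $\tau$ leaves a strictly positive margin $\epsilon$. In particular one needs the elementary but delicate inequality $|\,|y|-r_0\mu\,|\le |y-\overline x_1|+O(\mu h)$ on $\Omega_1^+$, and then a splitting of $\R^N$ into $\{|\,|y|-r_0\mu|\le\sqrt\mu\}$ and its complement exactly as done for $H_2$ above, so that the polynomial loss $(1+|y-\overline x_1|)^{m}$ is defeated either by the smallness of $\mu^{-m/2}$ (inside) or by the fast decay of $W^{2^*_s-1}$ (outside). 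Once these scale comparisons are in place, the rest is a routine application of the Appendix estimates.
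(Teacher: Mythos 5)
Your proposal is correct and follows essentially the same route as the paper: the paper writes $l_k=J_1+J_2$ with $J_1=K(\tfrac{|y|}{\mu})\bigl(W^{2^*_s-1}-\sum_jU_{\overline x_j,\Lambda}^{2^*_s-1}-\sum_jU_{\underline x_j,\Lambda}^{2^*_s-1}\bigr)$ and $J_2=\bigl(K(\tfrac{|y|}{\mu})-1\bigr)\sum_j\bigl(U_{\overline x_j,\Lambda}^{2^*_s-1}+U_{\underline x_j,\Lambda}^{2^*_s-1}\bigr)$, which differs from your splitting only in where the bounded factor $K$ sits, and then estimates the interaction piece on $\Omega_1^+$ via Lemma \ref{lemA1} and the spacings $|\overline x_1-\overline x_j|$, $|\overline x_1-\underline x_j|$, and the potential piece by exactly the near/far localization about $\{|y|=\mu r_0\}$ that you describe.
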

\begin{proof}
First we  rewrite equation \eqref{215} as
\begin{align*}
l_k&=K\Big(\frac{|y|}{\mu}\Big)\Big(W_{r,h,\Lambda}^{2^*_s-1}-\sum_{j=1}^kU^{2^*_s-1}_{\overline{x}_{j,\Lambda}}-
\sum_{j=1}^kU^{2^*_s-1}_{\underline{x}_{j,\Lambda}}\Big)+\sum_{j=1}^k\Big(U^{2^*_s-1}_{\overline{x}_{j,\Lambda}}+
 U^{2^*_s-1}_{\underline{x}_{j,\Lambda}}\Big)\Big[K\Big(\frac{|y|}{\mu}\Big)-1\Big]\nonumber\\
 &:=J_1+J_2.
 \end{align*}
By symmetry, supposing $y\in\Omega_{1}^+$, we obtain
 $$|y-\overline{x}_{1}|\leqslant|y-\underline{x}_{1}|\ \ \text{and}\ \ |y-\overline{x}_{1}|\leqslant|y-\overline{x}_{j}|\leqslant|y-\underline{x}_{j}|,j=2,3,\cdots,k.$$
So, it holds that
 \begin{align*}
 |J_1|&=K\Big(\frac{|y|}{\mu}\Big)\Big|\Big(\Big(\sum_{j=1}^kU_{\overline{x}_{j},\Lambda}+\sum_{j=1}^kU_{\underline{x}_{j},\Lambda}\Big)
 ^{2^*_s-1}-\sum_{j=1}^kU^{2^*_s-1}_{\overline{x}_{j,\Lambda}}-
\sum_{j=1}^kU^{2^*_s-1}_{\underline{x}_{j,\Lambda}}\Big)\Big|\nonumber\\
&\leqslant C\Big(U_{\overline{x}_{1},\Lambda}^{2^*_s-2}\Big(\sum_{j=2}^kU_{\overline{x}_{j},\Lambda}+\sum_{j=1}^kU_{\underline{x}_{j},\Lambda}\Big)
 +\Big(\sum_{j=2}^kU_{\overline{x}_{j},\Lambda}+
\sum_{j=1}^kU_{\underline{x}_{j},\Lambda}\Big)^{2^*_s-1}\Big)\nonumber\\
&\leqslant C\frac{1}{(1+|y-\overline{x}_{1}|)^{4s}}\Big(\sum_{j=2}^k\frac{1}{(1+|y-\overline{x}_{j}|)^{N-2s}}+\sum_{j=1}^k
\frac{1}{(1+|y-\underline{x}_{j}|)^{N-2s}}\Big)\nonumber\\
&\quad+\Big(\sum_{j=2}^k\frac{1}{(1+|y-\overline{x}_{j}|)^{N-2s}}\Big)^{2^*_s-1}
\end{align*}

By Lemma \ref{lemA1}, since $\displaystyle \Big(\frac{N+2s}{2}-\tau\Big)\frac{m}{N-2s}>\frac{m}{2}+\epsilon$ , from \eqref{qe} we  get
\begin{align} \label{217}
\frac{1}{(1+|y-\overline{x}_{1}|)^{4s}}\sum_{j=2}^k\frac{1}{(1+|y-\overline{x}_{j}|)^{N-2s}}&\leqslant\frac{1}{(1+|y-\overline{x}_{1}|)^{\frac{N+2s}{2}+\tau}}
\sum_{j=2}^k\frac{1}{|\overline{x}_{j}-\overline{x}_{1}|^{\frac{N+2s}{2}-\tau}}\nonumber\\
&\leqslant\frac{1}{(1+|y-\overline{x}_{1}|)^{\frac{N+2s}{2}+\tau}}
\Big(\frac{k}{\mu\sqrt{1-h^2}}\Big)^{\frac{N+2s}{2}-\tau}\nonumber\\
&\leqslant\frac{1}{(1+|y-\overline{x}_{1}|)^{\frac{N+2s}{2}+\tau}}
\Big(\frac{1}{\mu}\Big)^{\frac{m}{2}+\epsilon}
\end{align}

and
\begin{align} \label{218}
\frac{1}{(1+|y-\overline{x}_{1}|)^{4s}}\sum_{j=1}^k\frac{1}{(1+|y-\underline{x}_{j}|)^{N-2s}}&\leqslant\frac{1}{(1+|y-\overline{x}_{1}|)^{\frac{N+2s}{2}+\tau}}
\sum_{j=1}^k\frac{1}{|\underline{x}_{j}-\overline{x}_{1}|^{\frac{N+2s}{2}-\tau}}\nonumber\\
&\leqslant\frac{1}{(1+|y-\overline{x}_{1}|)^{\frac{N+2s}{2}+\tau}}
\Big(\frac{1}{rh}\Big)^{\frac{N+2s}{2}-\tau}\frac{hk}{\sqrt{1-h^2}}\nonumber\\
&\leqslant\frac{1}{(1+|y-\overline{x}_{1}|)^{\frac{N+2s}{2}+\tau}}
\Big(\frac{1}{\mu}\Big)^{\frac{m}{2}+\epsilon}.
\end{align}

By H\"older inequalities, for $\frac{N+2s}{4s}(\frac{N-2s}{2}-\frac{N-2s}{N+2s}\tau)>1$, there holds that
\begin{align}\label{219}
\Big(\sum_{j=2}^k\frac{1}{(1+|y-\overline{x}_{j}|)^{N-2s}}\Big)^{2^*_s-1}
&\leqslant\sum_{j=2}^k\frac{1}{(1+|y-\overline{x}_{j}|)^{\frac{N+2s}{2}+\tau}}\Big(\sum_{j=2}^k\frac{1}{(|\overline{x}_{j}-\overline{x}_{1}|)^{\frac{N+2s}{4s}(\frac{N-2s}{2}-\frac{N-2s}{N+2s}\tau)}}
\Big)^{\frac{4s}{N-2s}}\nonumber\\
&\leqslant\frac{1}{(1+|y-\overline{x}_{1}|)^{\frac{N+2s}{2}+\tau}}
\Big(\frac{1}{\mu}\Big)^{\frac{m}{2}+\epsilon}.
\end{align}

Combining \eqref{217},\eqref{218},\eqref{219}, we obtain

 $$||J_1||_{**}\leqslant
\Big(\frac{1}{\mu}\Big)^{\frac{m}{2}+\epsilon}.$$

Next, we consider $J_2$ to estimate
\begin{align*}
&\qquad J_2\leqslant2\Big(\Big[K\Big(\frac{|y|}{\mu}\Big)-1\Big]U^{2^*_s-1}_{\overline{x}_{1,\Lambda}}+
 \Big[K\Big(\frac{|y|}{\mu}\Big)-1\Big]\sum_{j=2}^kU^{2^*_s-1}_{\overline{x}_{j,\Lambda}}\Big).
\end{align*}
For the first term, for every $y\in\Omega_{1}^+$, when $\arrowvert|y|-\mu r_0\arrowvert\geqslant\delta\mu$, we obtain
$$|y-\overline{x}_{1}|\geqslant||y|-|\overline{x}_{1}||\geqslant||y|-\mu r_0|-||\overline{x}_{1}|-\mu r_0||\geqslant\frac{1}{2}\delta\mu.$$
Hence,
\begin{align}\label{220}
[K\Big(\frac{|y|}{\mu}\Big)-1]U_{\overline{x}_{1},\Lambda}^{2^{*}_s-1}
&\leqslant C\frac{1}{(1+|y-\overline{x}_{1}|)^{\frac{N+2s}{2}+\tau}}\frac{1}{\mu^{\frac{N+2s}{2}-\tau}}\nonumber\\
&\leqslant\frac{1}{(1+|y-\overline{x}_{1}|)^{\frac{N+2s}{2}+\tau}}
\Big(\frac{1}{\mu}\Big)^{\frac{m}{2}+\epsilon},
\end{align}
where the last inequalities is due to $m\in(\frac{N-2s}{2},N-2s)$, $\frac{m}{2}+\epsilon<\frac{N+2s}{2}-\tau$.

\smallskip

When $y\in\Omega_{1}^+$ and $\arrowvert|y|-\mu r_0\arrowvert<\delta\mu$, we have $|y-\overline{x}_{1}|\leqslant||y|-\mu r_0|+||\overline{x}_{1}|-\mu r_0||\leqslant2\delta\mu$. From
the condition of $K(|y|)$, we get
\begin{align*}
\Big[K\Big(\frac{|y|}{\mu}\Big)-1\Big] \leqslant\frac{C}{\mu^m}\Big(||y|-|\overline{x}_{1}||^m+\frac{1}{\mu^{m\theta}}\Big).
\end{align*}
Thus,
\begin{align}\label{221}
\Big[K\Big(\frac{|y|}{\mu}\Big)-1\Big]U_{\overline{x}_{1},\Lambda}^{2^{*}_s-1}
&\leqslant \frac{C}{(1+|y-\overline{x}_{1}|)^{\frac{N+2s}{2}+\sigma}}\frac{1}{\mu^{\frac{m}{2}+\epsilon}}.
\end{align}
For the second term, we  get
\begin{align}\label{222}
\Big[K\Big(\frac{|y|}{\mu}\Big)-1\Big]\sum_{j=2}^kU^{2^*_s-1}_{\overline{x}_{j,\Lambda}}&\leqslant\frac{C}{(1+|y-\overline{x}_{1}|)^{\frac{N+2s}{2}}}\sum_{j=2}^k\frac{C}{(1+|y-\overline{x}_{j}|)^{\frac{N+2s}{2}}}\nonumber\\
&\leqslant\frac{C}{(1+|y-\overline{x}_{1}|)^{\frac{N+2s}{2}+\sigma}}\sum_{j=2}^k\frac{C}{|\overline{x}_{1}-\overline{x}_{j}|^{\frac{N+2s}{2}-\sigma}}\nonumber\\
&\leqslant\frac{C}{(1+|y-\overline{x}_{1}|)^{\frac{N+2s}{2}+\sigma}}\Big(\frac{1}{\mu}\Big)^{\frac{m}{2}+\epsilon}.
\end{align}
Combing \eqref{220},\eqref{221},\eqref{222}, we  obtain
 $$||J_2||_{**}\leqslant C\Big(\frac{1}{\mu}\Big)^{\frac{m}{2}+\epsilon}.$$
\end{proof}

Next, we  prove the following result through the contraction mapping Theorem.
\begin{Lem}\label{lem 25}
There exist $k_0>0$, such that for all $k\geqslant k_0, (r,h,\Lambda)\in\D$. The problem  \eqref{213} has a unique solution $\phi=\phi(r,h,\Lambda)$, satisfying
\begin{equation}\label{223}
||\phi||_*\leqslant C\Big(\frac{1}{\mu}\Big)^{\frac{m}{2}+\epsilon}
\end{equation}
and
\begin{equation}\label{224}
|c_t|\leqslant \frac{C}{1+\pmb{\delta_{i2}}r}\Big(\frac{1}{\mu}\Big)^{\frac{m}{2}+\epsilon}
,\end{equation}
where $\epsilon>0$ is a small constant.
\end{Lem}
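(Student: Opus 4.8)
The plan is to recast the nonlinear problem \eqref{214} (which is equivalent to \eqref{213}) as a fixed point equation and apply the Banach contraction principle in a small ball of the weighted space $(\E,\|\cdot\|_*)$. By Lemma \ref{lem 22}, for every $f\in L^\infty(\R^N)$ the linear problem \eqref{21} is uniquely solvable by $\phi=\mathbf L_k(f)\in\E$, with the constants $c_i$ determined by $f$, and with $\|\mathbf L_k(f)\|_*\leqslant C\|f\|_{**}$ and $|c_i|\leqslant\frac{C}{1+r\pmb{\delta_{i2}}}\|f\|_{**}$. Hence solving \eqref{214} amounts to finding $\phi\in\E$ with
\[
\phi=\mathbf A(\phi):=\mathbf L_k\big(N(\phi)+l_k\big).
\]
I would work in the complete metric space $\mathcal B:=\{\phi\in\E:\ \|\phi\|_*\leqslant\rho_k\}$ with $\rho_k:=C_0(1/\mu)^{\frac m2+\epsilon}$, the constant $C_0$ to be fixed large below, and note that $\mathbf A(\mathcal B)\subset\E$ automatically since the range of $\mathbf L_k$ lies in $\E$.

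\emph{Self-mapping.} For $\phi\in\mathcal B$, combining the boundedness of $\mathbf L_k$ with Lemma \ref{lem 23} and Lemma \ref{lem 24} gives
\[
\|\mathbf A(\phi)\|_*\leqslant C\big(\|N(\phi)\|_{**}+\|l_k\|_{**}\big)\leqslant C\|\phi\|_*^{\min(2^*_s-1,2)}+C\Big(\frac1\mu\Big)^{\frac m2+\epsilon}.
\]
Since $\min(2^*_s-1,2)>1$, the first term equals $C\rho_k^{\min(2^*_s-1,2)}=o(\rho_k)$ as $k\to\infty$; thus, choosing $C_0$ suitably and then $k_0$ large, we get $\|\mathbf A(\phi)\|_*\leqslant\rho_k$, i.e. $\mathbf A(\mathcal B)\subset\mathcal B$.

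\emph{Contraction.} For $\phi_1,\phi_2\in\mathcal B$, the pointwise mean value inequality for the map $t\mapsto |t|^{2^*_s-1}t$, handled exactly as in the proof of Lemma \ref{lem 23} (splitting $\R^N$ according to which of the bubbles $U_{\overline x_j,\Lambda},U_{\underline x_j,\Lambda}$ dominates), yields
\[
\|N(\phi_1)-N(\phi_2)\|_{**}\leqslant C\big(\|\phi_1\|_*+\|\phi_2\|_*\big)^{\min(2^*_s-1,2)-1}\|\phi_1-\phi_2\|_*,
\]
hence $\|\mathbf A(\phi_1)-\mathbf A(\phi_2)\|_*\leqslant C\rho_k^{\min(2^*_s-1,2)-1}\|\phi_1-\phi_2\|_*\leqslant\frac12\|\phi_1-\phi_2\|_*$ for $k\geqslant k_0$. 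So $\mathbf A$ is a contraction on $\mathcal B$ and admits a unique fixed point $\phi=\phi(r,h,\Lambda)\in\mathcal B$, which gives \eqref{223}; the dependence of $\phi$ on $(r,h,\Lambda)$ is as regular as the data, by applying the implicit function theorem to $(\phi,r,h,\Lambda)\mapsto\phi-\mathbf A(\phi)$.

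\emph{Estimate of $c_t$ and main obstacle.} Once $\phi$ is found, the $c_t$ are precisely those produced by $\mathbf L_k$ with right-hand side $f=N(\phi)+l_k$, so the second bound in \eqref{212} together with $\|N(\phi)\|_{**}+\|l_k\|_{**}\leqslant C(1/\mu)^{\frac m2+\epsilon}$ gives \eqref{224}. The only point requiring genuine care is the Lipschitz estimate for $N(\phi)$ above: one must treat uniformly the regime $2^*_s-1\geqslant 2$ (where $N$ is essentially quadratic, $N\leqslant 6s$) and the regime $2^*_s-1<2$ (where $N$ is only Hölder of order $2^*_s-1$), which is why the exponent $\min(2^*_s-1,2)-1$ appears; this is a routine but slightly delicate adaptation of the estimates already carried out for $l_k$. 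Everything else is a direct consequence of Lemmas \ref{lem 22}--\ref{lem 24}.
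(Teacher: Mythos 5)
Your proposal is correct and follows essentially the same route as the paper: both recast \eqref{214} as the fixed point equation $\phi=\mathbf L_k(N(\phi))+\mathbf L_k(l_k)$, verify self-mapping of the ball $\|\phi\|_*\leqslant C(1/\mu)^{\frac m2+\epsilon}$ via Lemmas \ref{lem 22}--\ref{lem 24}, obtain the contraction from the same two-regime Lipschitz bound on $N$ (the paper's $p\leqslant 3$ versus $p>3$ cases correspond to your $\min(2^*_s-1,2)$ dichotomy), and read off \eqref{224} from the second estimate in \eqref{212}.
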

\begin{proof}
Define
$$\bar{S}:=\left\{\phi:\phi\in \E\cap C^\infty_0(\R^N), \|\phi\|_{*}\leqslant C\Big(\frac{1}{\mu}\Big)^{\frac{m}{2}+\epsilon}
\right\}.$$
By Lemma \ref{lem 22}, $\mathbf{L}$ is invertible, so \eqref{214} is equivalent to
$$\phi=A(\phi):=\mathbf{L}_k(N(\phi))+\mathbf{L}_k(l_k).$$

We  will conclude  that $A$ is a contraction mapping  from $\bar{S}$ to $\bar{S}.$ In fact, by Lemmas \ref{lem 23} and Lemma \ref{lem 24}, we get
\begin{align*}
\|A(\phi)\|_*&\leqslant C\|N(\phi)\|_{**}+C\|l_k\|_{**}\nonumber\\
&\leqslant\Big(\frac{1}{\mu}\Big)^{\frac{m}{2}+\epsilon}.
\end{align*}

So, $A$ maps $\bar{S}$ to $\bar{S}$. And if $p\leqslant3$, we have $|N'(t)|\leqslant|t|^{p-2}$; if $p>3$, we have $|N'(t)|\leqslant C(W^{2^*_s-3}_{r,h,\Lambda}|t|+|t|^{p-2})$. Thus if $p\leqslant3$, for all $\phi_1, \phi_2\in\bar{S}$, it holds that
\begin{align*}
\|A(\phi_{1})-A(\phi_{2})\|_*&=\|L_k(N(\phi_1))-L_k(N(\phi_2))\|_*\\
&\leqslant\frac{1}{2}\|\phi_1-\phi_2\|_*.
\end{align*}

Therefore, $A$ is a contracting mapping. The case $p>3$  can be obtained similarly. According to the contraction mapping Theorem there is a unique $\phi\in \E$, such that
$$\phi=A(\phi).$$

Besides, by  Lemma \ref{lem 22}, we  conclude \eqref{223} and \eqref{224}. \end{proof}

\section{Finite dimensional problem}
Define
$$F(r,h,\Lambda)=I(W_{r,h,\Lambda}+\phi),\ \ \ \forall(r,h,\Lambda)\in
\D,$$
where $r=|\overline{x}_1|$ and $\phi$ is the function gained in Lemma \ref{lem 25}.

In this section, we will give the energy expansions for $F(r,h,\Lambda), \frac{\partial F(r,h,\Lambda)}{\partial\Lambda}$ and $\frac{\partial F(r,h,\Lambda)}{\partial h}$.
\begin{Prop}\label{pp31}
We have
\begin{align}\label{31}
F(r,h,\Lambda)&=k\Big(A+\frac{A_1}{\Lambda^m \mu^m}+\frac{A_2}{\Lambda^{m-2}\mu^m}(\mu r_{0}-r)^2
-\frac{B_{1}k^{N-2s}}{\Lambda^{N-2s}(r\sqrt{1-h^2})^{N-2s}}
-\frac{B_2}{\Lambda^{N-2s}(rh)^{N-2s}}\frac{hk}{\sqrt{1-h^2}}\nonumber\\
&\quad+\frac{C}{\mu^m}|\mu r_0-r|^{2+\theta}+\frac{C}{\mu^{m+\theta}}
+O\Big(\frac{1}{\mu^{N-2s-\frac{N-2s-m}{N-2s}-\frac{(N-2s-m)(N-2s-1)^2}{(N-2s)(N-2s+1)}+\theta}}\Big)\Big),
\end{align}
where $ A, A_1, A_2, B_{1}\ and\ B_{2}$ are some positive constants.
\end{Prop}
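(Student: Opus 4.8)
\textbf{Proof proposal for Proposition~\ref{pp31}.}

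The plan is to expand the functional $I(W_{r,h,\Lambda}+\phi)$ around the approximate solution $W_{r,h,\Lambda}$ using the estimate $\|\phi\|_*\leqslant C\mu^{-(m/2+\epsilon)}$ from Lemma~\ref{lem 25}, and then to carefully compute the energy $I(W_{r,h,\Lambda})$ itself. First I would write $I(W+\phi)=I(W)+\langle I'(W),\phi\rangle+\frac12\langle I''(W)\phi,\phi\rangle+\text{higher order}$. The linear term is controlled by $\|l_k\|_{**}\|\phi\|_*$ together with the orthogonality conditions in $\E$ (so the contribution of $\sum c_i(\cdots)$ vanishes), and since both $\|l_k\|_{**}$ and $\|\phi\|_*$ are of order $\mu^{-(m/2+\epsilon)}$, this produces a term of order $\mu^{-(m+2\epsilon)}$, which is absorbed into the error $O(\mu^{-(m+\theta)})$ after choosing $\theta$ suitably small relative to $\epsilon$. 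The quadratic term $\langle I''(W)\phi,\phi\rangle$ is bounded by $\|\phi\|_*^2\lesssim\mu^{-(m+2\epsilon)}$ likewise, and Lemma~\ref{lem 23} handles the remaining nonlinear contributions. Hence modulo the stated error it suffices to expand $I(W_{r,h,\Lambda})$.

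The core computation is therefore the expansion of
\[
I(W_{r,h,\Lambda})=\frac12\int_{\R^N}\bigl|(-\Delta)^{s/2}W\bigr|^2-\frac{1}{2^*_s}\int_{\R^N}K\Bigl(\tfrac{|y|}{\mu}\Bigr)W^{2^*_s},
\]
which I would carry out in the spirit of the Laplacian case treated in \cite{DLP-MM-W} and \cite{JS}, using the harmonic extension to compute the quadratic form. The quadratic part gives $2k$ copies of the Aubin--Talenti energy $A$ plus the interaction terms; the leading interactions come from pairs of neighbouring bubbles on the same circle (producing the term $-B_1 k^{N-2s}/(\Lambda^{N-2s}(r\sqrt{1-h^2})^{N-2s})$, since the distance between consecutive $\overline{x}_j$ is of order $r\sqrt{1-h^2}/k$) and from vertically opposite pairs $\overline{x}_j,\underline{x}_j$ (producing $-B_2 hk/((rh)^{N-2s}\Lambda^{N-2s}\sqrt{1-h^2})$, since $|\overline{x}_j-\underline{x}_j|=2rh$ and the number of such interacting pairs over the whole configuration scales like $hk/\sqrt{1-h^2}$). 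For the potential term I would split $\R^N=\cup_j(\Omega_j^+\cup\Omega_j^-)$ and on each region use $K(|y|/\mu)=1-c_0|\,|y|/\mu-r_0|^m+O(|\cdots|^{m+\theta})$; expanding around the bubble centre $\overline{x}_j$ (where $|\,\overline{x}_j|/\mu$ may differ from $r_0$) and using $\int_{\R^N}U_{0,\Lambda}^{2^*_s}|y|^m\,dy$ type integrals produces $A_1/(\Lambda^m\mu^m)$ and the quadratic-in-$(\mu r_0-r)$ correction $A_2(\mu r_0-r)^2/(\Lambda^{m-2}\mu^m)$, with the remainder giving the $C|\mu r_0-r|^{2+\theta}/\mu^m$ and $C/\mu^{m+\theta}$ terms; the constraint $m\in(\frac{N-2s}{2},N-2s)$ ensures these moment integrals converge and that the error from the far region $|\,|y|-\mu r_0|>\delta\mu$ is negligible. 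Collecting everything and factoring out $k$ gives \eqref{31}; one checks the orders of all neglected cross-terms (bubble-interaction $\times$ potential-perturbation, next-nearest-neighbour interactions, etc.) are dominated by the displayed error exponent $N-2s-\frac{N-2s-m}{N-2s}-\frac{(N-2s-m)(N-2s-1)^2}{(N-2s)(N-2s+1)}+\theta$, using the definitions $\mu=k^{(N-2s)/(N-2s-m)}$ and $h=O(k^{-(N-2s-1)/(N-2s+1)})$ from \eqref{qe}.

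The main obstacle I anticipate is the bookkeeping of the vertical interaction term and the role of the new parameter $h$: unlike the single-circle case, here $h$ is small but nonzero, so one must track how the factor $hk/\sqrt{1-h^2}$ enters both the interaction count and the effective separation $2rh$, and verify that with the prescribed scaling $h=O(k^{-(N-2s-1)/(N-2s+1)})$ the term $B_2 hk/((rh)^{N-2s}\sqrt{1-h^2})$ is genuinely of the same order as, and competes with, the horizontal interaction term $B_1 k^{N-2s}/(r\sqrt{1-h^2})^{N-2s}$ and the potential term $A_1/(\Lambda^m\mu^m)$ — this balance is exactly what forces the choices in \eqref{qe}. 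A secondary technical point is justifying the harmonic-extension manipulations for the nonlocal quadratic form uniformly in the $2k$ bubbles; I would invoke the estimates in the Appendix (Lemmas~\ref{lemA1}--\ref{lemA3}) to control all the pairwise convolution integrals $\int U_{\overline x_i,\Lambda}^{2^*_s-1}U_{\overline x_j,\Lambda}$ and their $h$-derivatives with the required precision.
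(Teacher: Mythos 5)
Your proposal is correct and follows essentially the same route as the paper: reduce $F(r,h,\Lambda)$ to $I(W_{r,h,\Lambda})$ up to errors of order $k\mu^{-(m+\theta)}$ using $\|\phi\|_*\lesssim\mu^{-(m/2+\epsilon)}$ together with the orthogonality in $\E$ and Lemmas~\ref{lem 23}--\ref{lem 24}, and then expand $I(W_{r,h,\Lambda})$ by separating the self-energy, the horizontal and vertical bubble interactions (via Lemma~\ref{lemA4}), and the potential contribution, exactly as in the paper's Appendix Lemma~\ref{lemA5}. The only cosmetic difference is that you Taylor-expand $I$ around $W$ while the paper expands around $W+\phi$ using $\langle I'(W+\phi),\phi\rangle=0$; these are equivalent given the same estimates.
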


\begin{proof}
Since
$$\langle I'(W_{r,h,\Lambda}+\phi),\phi\rangle=0,\quad \quad \forall\phi\in\bar{S},$$
there is a constant $\theta\in(0,1)$ such that
\begin{align*}
F(r,h,\Lambda)&=I(W_{r,h,\Lambda})+\frac{1}{2}D^2I(W_{r,h,\Lambda}+\theta\phi)(\phi,\phi)\nonumber\\
&=I(W_{r,h,\Lambda})-\frac{2^*_s-1}{2}\int_{\R^N}K\Big(\frac{|y|}{\mu}\Big)\Big((W_{r,h,\Lambda}+\theta\phi)^{2^*_s-2}-W_{r,h,\Lambda}^{2^*_s-2}\Big)\phi^2
+\frac{1}{2}\int_{\R^N}(N(\phi)+l_k)\phi\nonumber\\
&=I(W_{r,h,\Lambda})+O\Big(\int_{\R^N}(|\phi|^{2^*_s}+|N(\phi)||\phi|+|l_k||\phi|)\Big).
\end{align*}
Since
\begin{align*}
\int_{\R^N}(|N(\phi)||\phi|+|l_k||\phi|)&\leqslant C(||N(\phi)||_{**}+||l_k||_{**})||\phi||_{*}\nonumber\\
&\quad\times\int_{\R^N}\Big(\sum_{j=1}^k\frac{1}{(1+|y-\overline{x}_{j}|)^{\frac{N-2s}{2}+\tau}}+
\sum_{j=1}^k\frac{1}{(1+|y-\underline{x}_{j}|)^{\frac{N-2s}{2}+\tau}}\Big)\nonumber\\
&\quad\times
\Big(\sum_{i=1}^k\frac{1}{(1+|y-\overline{x}_{i}|)^{\frac{N+2s}{2}+\tau}}+
\sum_{i=1}^k\frac{1}{(1+|y-\underline{x}_{i}|)^{\frac{N+2s}{2}+\tau}}\Big),
\end{align*}
then by Lemma \ref{lemA1}, we obtain
\begin{align*}
&\sum_{j=1}^k\frac{1}{(1+|y-\overline{x}_{j}|)^{\frac{N-2s}{2}+\tau}}\sum_{i=1}^k\frac{1}{(1+|y-\overline{x}_{i}|)^{\frac{N+2s}{2}+\tau}}
\leqslant C\sum_{j=1}^k\frac{1}{(1+|y-\overline{x}_{j}|)^{N+\tau}}.
\end{align*}
Thus
\begin{align*}
\int_{\R^N}(|N(\phi)||\phi|+|l_k||\phi|)&\leqslant Ck(||N(\phi)||_{**}+||l_k||_{**})||\phi||_{*}\nonumber\\&\leqslant
\frac{Ck}{\mu^{m+\theta}}.
\end{align*}

On the other hand, we have
$$\int_{\R^N}|\phi|^{2^*_s}\leqslant C||\phi||_*^{2^*_s}\int_{\R^N}\Big(\sum_{j=1}^k\frac{1}{(1+|y-\overline{x}_{j}|)^{\frac{N-2s}{2}+\tau}}+
\sum_{j=1}^k\frac{1}{(1+|y-\underline{x}_{j}|)^{\frac{N-2s}{2}+\tau}}\Big)^{2^*_s}.$$
By using Lemma \ref{lemA1}, if $y\in\Omega_{1}^+$, we can find a small constant $\alpha>0$ such that
\begin{align*}
&\quad\sum_{j=2}^k\frac{1}{(1+|y-\overline{x}_{j}|)^{\frac{N-2s}{2}+\tau}}+
\sum_{j=1}^k\frac{1}{(1+|y-\underline{x}_{j}|)^{\frac{N-2s}{2}+\tau}}\nonumber\\
&\leqslant C\sum_{j=1}^k\frac{1}{(1+|y-\overline{x}_{1}|)^{\frac{N-2s}{2}+\frac{1}{2}\alpha}}\Big(\frac{1}{|\overline{x}_{j}-\overline{x}_{1}|^{\tau-\frac{1}{2}\alpha}}
+\frac{1}{|\underline{x}_{j}-\overline{x}_{1}|^{\tau-\frac{1}{2}\alpha}}\Big)\nonumber\\
&\leqslant C\sum_{j=1}^k\frac{1}{(1+|y-\overline{x}_{1}|)^{\frac{N-2s}{2}+\frac{1}{2}\alpha}},
\end{align*}
and so
$$\Big(\sum_{j=1}^k\frac{1}{(1+|y-\overline{x}_{j}|)^{\frac{N-2s}{2}+\tau}}+
\sum_{j=1}^k\frac{1}{(1+|y-\underline{x}_{j}|)^{\frac{N-2s}{2}+\tau}}\Big)^{2^*_s}
\leqslant \frac{C}{(1+|y-\overline{x}_{1}|)^{N+2^*_s\frac{1}{2}\alpha}}.$$
Since
$$\int_{\R^N}\Big(\sum_{j=1}^k\frac{1}{(1+|y-\overline{x}_{j}|)^{\frac{N-2s}{2}+\tau}}+
\sum_{j=1}^k\frac{1}{(1+|y-\underline{x}_{j}|)^{\frac{N-2s}{2}+\tau}}\Big)^{2^*_s}\leqslant Ck,$$
we have
\begin{align*}
\int_{\R^N}|\phi|^{2^*_s}\leqslant Ck||\phi||_*^{2^*_s}\leqslant \frac{Ck}{\mu^{m+\theta}}.
\end{align*}
\end{proof}

\begin{Prop}\label{pp33}
\begin{align}\label{33}
\frac{\partial F(r,h,\Lambda)}{\partial h}&=k\Big(\frac{B_2(N-2s-1)k}{\Lambda^{N-2s}r^{N-2s}h^{N-2s}\sqrt{1-h^2}}
-\frac{B_1(N-2s)k^{N-2s}h}{\Lambda^{N-2s}r^{N-2s}(\sqrt{1-h^2})^{N-2s+2}}\\
&\quad+O\Big(\frac{1}{\mu^{N-2s-\frac{N-2s-m}{N-2s}-\frac{(N-2s-m)(N-2s-1)}{(N-2s)(N-2s+1)}+\theta}}\Big)\Big).
\end{align}
where $  B_{1}\ and\ B_{2}$ are some positive constants.
\end{Prop}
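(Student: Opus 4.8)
The plan is to differentiate the energy expansion of Proposition~\ref{pp31} with respect to $h$, keeping careful track of which terms actually contribute to the two leading-order quantities. Writing
\[
F(r,h,\Lambda)=k\Big(A+\frac{A_1}{\Lambda^m\mu^m}+\frac{A_2}{\Lambda^{m-2}\mu^m}(\mu r_0-r)^2-\frac{B_1 k^{N-2s}}{\Lambda^{N-2s}(r\sqrt{1-h^2})^{N-2s}}-\frac{B_2}{\Lambda^{N-2s}(rh)^{N-2s}}\frac{hk}{\sqrt{1-h^2}}+(\text{error})\Big),
\]
the first three bracketed terms are $h$-independent, so $\partial_h$ annihilates them. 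Differentiating the $B_1$ term gives $-B_1 k^{N-2s}\Lambda^{-(N-2s)}r^{-(N-2s)}\partial_h(1-h^2)^{-(N-2s)/2}=-B_1(N-2s)k^{N-2s}h\,\Lambda^{-(N-2s)}r^{-(N-2s)}(\sqrt{1-h^2})^{-(N-2s+2)}$, which is the second main term claimed. Differentiating the $B_2$ term: rewrite it as $-B_2 k\,\Lambda^{-(N-2s)}r^{-(N-2s)}h^{-(N-2s)+1}(1-h^2)^{-1/2}=-B_2 k\,\Lambda^{-(N-2s)}r^{-(N-2s)}h^{-(N-2s-1)}(1-h^2)^{-1/2}$; since by \eqref{qe} $h=O(k^{-(N-2s-1)/(N-2s+1)})\to 0$, the dominant part of its $h$-derivative comes from hitting the $h^{-(N-2s-1)}$ factor, producing $B_2(N-2s-1)k\,\Lambda^{-(N-2s)}r^{-(N-2s)}h^{-(N-2s)}(1-h^2)^{-1/2}$, i.e.\ the first main term claimed, while the $(1-h^2)^{-1/2}$ derivative yields a term smaller by a factor $h^2$, hence absorbable into the error. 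This explains the structure of \eqref{33}.

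The technical heart is controlling $\partial_h(\text{error})$, i.e.\ showing that differentiating the $O$-term in \eqref{31} in $h$ only costs a bounded factor and lands inside the stated remainder $O(\mu^{-(N-2s-\frac{N-2s-m}{N-2s}-\frac{(N-2s-m)(N-2s-1)}{(N-2s)(N-2s+1)}+\theta)})$. To do this rigorously I would not differentiate the already-estimated $O$-term directly; instead I would go back to the identity $F(r,h,\Lambda)=I(W_{r,h,\Lambda}+\phi)$ and compute
\[
\frac{\partial F}{\partial h}=\Big\langle I'(W_{r,h,\Lambda}+\phi),\frac{\partial W_{r,h,\Lambda}}{\partial h}+\frac{\partial\phi}{\partial h}\Big\rangle.
\]
Using that $\phi$ solves \eqref{213}, the term $\langle I'(W+\phi),\partial_h\phi\rangle$ reduces to $\sum_i c_i\langle\sum_j U^{2^*_s-2}_{\overline x_{j,\Lambda}}\overline\Z_{i,j}+\sum_j U^{2^*_s-2}_{\underline x_{j,\Lambda}}\underline\Z_{i,j},\partial_h\phi\rangle$; since $\phi\in\E$ is orthogonal to those kernels, differentiating the orthogonality relations in $h$ shows $\langle U^{2^*_s-2}\Z,\partial_h\phi\rangle=-\langle\partial_h(U^{2^*_s-2}\Z),\phi\rangle$, which combined with the bounds $|c_i|\le C(1+r\pmb{\delta_{i2}})^{-1}(1/\mu)^{m/2+\epsilon}$ from Lemma~\ref{lem 25} and $\|\phi\|_*\le C(1/\mu)^{m/2+\epsilon}$ is of lower order. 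So the main contribution is $\langle I'(W+\phi),\partial_h W\rangle$, which I would expand as $\langle I'(W),\partial_h W\rangle$ plus a correction quadratic in $\phi$, estimated exactly as in the proof of Proposition~\ref{pp31} using Lemmas~\ref{lem 23}, \ref{lem 24}, \ref{lemA1}; and $\partial_h W=\sum_j(\overline\Z_{2,j}+\underline\Z_{2,j})$ so $\langle I'(W),\partial_h W\rangle$ is computed from the energy formula for $I(W)$ in the Appendix, differentiated term by term.

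The main obstacle I anticipate is the bookkeeping of the interaction terms: $\partial_h\overline x_j$ and $\partial_h\underline x_j$ introduce extra factors of $r$ (and the oblate geometry makes $|\underline x_j-\overline x_j|\sim rh$ small), so one must verify that differentiating the bubble-interaction sums in $h$—in particular the cross terms between the upper circle and the lower circle, whose separation is the small quantity $rh$—still produces the clean leading behavior $B_2(N-2s-1)k/(\Lambda^{N-2s}r^{N-2s}h^{N-2s}\sqrt{1-h^2})$ and that the $h$-derivative of the $O(|\mu r_0-r|^{2+\theta}/\mu^m)$ and $O(1/\mu^{m+\theta})$ pieces, as well as the derivative of the term coming from $\int|\phi|^{2^*_s}$, all remain below the stated threshold. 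This is where the presence of the extra parameter $h$ genuinely changes the analysis compared with the circle case, and it requires the sharper estimates on $|\overline x_j-\overline x_1|\sim \mu\sqrt{1-h^2}/k$ and $|\underline x_j-\overline x_1|$ that already appear in \eqref{217}--\eqref{222}, now applied to the $h$-differentiated kernels.
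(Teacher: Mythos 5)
Your rigorous route is the same as the paper's: write $\partial_h F=\langle I'(W_{r,h,\Lambda}+\phi),\partial_h W_{r,h,\Lambda}+\partial_h\phi\rangle$, kill the $\partial_h\phi$ contribution using equation \eqref{213}, the differentiated orthogonality relations and the bounds on $c_i$ and $\|\phi\|_*$, and read the main terms off the expansion of $\partial_h I(W_{r,h,\Lambda})$ in Lemma \ref{lemA7}; your term-by-term differentiation of \eqref{31} correctly predicts both leading coefficients.

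One step is mis-stated, though. The correction to $\langle I'(W_{r,h,\Lambda}),\partial_h W_{r,h,\Lambda}\rangle$ coming from $\phi$ is \emph{linear} in $\phi$, not quadratic: unlike in Proposition \ref{pp31}, where $\langle I'(W+\phi),\phi\rangle=0$ removes the first-order term, here the pairing is against $\partial_h W_{r,h,\Lambda}$ and the term $(2^*_s-1)\int K\big(\tfrac{|y|}{\mu}\big)W_{r,h,\Lambda}^{2^*_s-2}\,\partial_h W_{r,h,\Lambda}\,\phi$ survives. Its naive bound (using $|\overline{\Z}_{2,j}|\leqslant Cr(1+|y-\overline{x}_j|)^{-(N-2s)}$ and $\|\phi\|_*\lesssim\mu^{-m/2-\epsilon}$) is far too large for the stated remainder. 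To control it you must invoke the orthogonality $\phi\in\E$ once more, replacing $W_{r,h,\Lambda}^{2^*_s-2}\partial_h W_{r,h,\Lambda}$ by its difference with $\sum_j U^{2^*_s-2}_{\overline{x}_j,\Lambda}\partial_h U_{\overline{x}_j,\Lambda}+\sum_j U^{2^*_s-2}_{\underline{x}_j,\Lambda}\partial_h U_{\underline{x}_j,\Lambda}$ and treating separately the piece weighted by $K-1$; this gains the extra smallness that brings the term down to $O(\mu^{-(m-1)-2\epsilon})$, which is exactly how the paper closes the estimate. With that correction your argument matches the paper's proof.
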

\begin{proof}We have
\begin{align*}
\frac{\partial F(r,h,\Lambda)}{\partial h}&=\Big\langle I'(W_{r,h,\Lambda}+\phi),\frac{\partial W_{r,h,\Lambda}}{\partial h}+\frac{\partial\phi}{\partial h}\Big\rangle\nonumber\\
&=\Big\langle I'(W_{r,h,\Lambda}+\phi),\frac{\partial W_{r,h,\Lambda}}{\partial h}\Big\rangle
+\sum_{t=1}^3\sum_{j=1}^k c_t\Big\langle\Big(U^{2^*_s-2}_{\overline{x}_{j,\Lambda}}\overline{\Z}_{t,j}+
 U^{2^*_s-2}_{\underline{x}_{j,\Lambda}}\underline{\Z}_{t,j}\Big),\frac{\partial\phi}{\partial h}\Big\rangle\nonumber\\
 &=\frac{\partial I(W_r,h,\Lambda)}{\partial h}+\Big\langle K\Big(\frac{|y|}{\mu}\Big)[(W_{r,h,\Lambda}+\phi)^{2^*_s-1}-W_{r,h,\Lambda}^{2^*_s-1}],\frac{\partial W_{r,h,\Lambda}}{\partial h}\Big\rangle
\nonumber\\&\quad+\sum_{t=1}^3\sum_{j=1}^k c_t\Big\langle\Big(U^{2^*_s-2}_{\overline{x}_{j,\Lambda}}\overline{\Z}_{t,j}+
 U^{2^*_s-2}_{\underline{x}_{j,\Lambda}}\underline{\Z}_{t,j}\Big),\frac{\partial\phi}{\partial h}\Big\rangle.
\end{align*}
In views of the orthogonality, we get
$$\Big\langle\Big(U^{2^*_s-2}_{\overline{x}_{j,\Lambda}}\overline{\Z}_{t,j}+
 U^{2^*_s-2}_{\underline{x}_{j,\Lambda}}\underline{\Z}_{t,j}\Big),\frac{\partial\phi}{\partial h}\Big\rangle=
 -\Big\langle\frac{\partial}{\partial h}\Big(U^{2^*_s-2}_{\overline{x}_{j,\Lambda}}\overline{\Z}_{t,j}+
 U^{2^*_s-2}_{\underline{x}_{j,\Lambda}}\underline{\Z}_{t,j}\Big),\phi\Big\rangle.$$
Through Lemma \ref{lem 25}, it holds that
\begin{align*}
&\quad\Big|\sum_{j=1}^k c_t\Big\langle\Big(U^{2^*_s-2}_{\overline{x}_{j,\Lambda}}\overline{\Z}_{t,j}+
 U^{2^*_s-2}_{\underline{x}_{j,\Lambda}}\underline{\Z}_{t,j}\Big),\frac{\partial\phi}{\partial h}\Big\rangle\Big|\nonumber\\
 &\leqslant C|c_t|||\phi||_*\int_{\R^N}\Big(\frac{\partial U^{2^*_s-2}_{\overline{x}_{j,\Lambda}}\overline{\Z}_{t,j}}{\partial h}+\frac{\partial U^{2^*_s-2}_{\underline{x}_{j,\Lambda}}\underline{\Z}_{t,j}}{\partial h}\Big)\nonumber\\
&\quad\times\Big(\sum_{j=1}^k\frac{1}{(1+|y-\overline{x}_{j}|)^{\frac{N-2s}{2}+\tau}}+
\sum_{j=1}^k\frac{1}{(1+|y-\underline{x}_{j}|)^{\frac{N-2s}{2}+\tau}}\Big)\nonumber\\
&\leqslant C|c_t|||\phi||_*\int_{\R^N}\frac{\mu(1+r\pmb{\delta_{i2}})}{(1+|y-\overline{x}_{j}|)^{N+2s+1}}\nonumber\\
&\quad\times\Big(\sum_{j=1}^k\frac{1}{(1+|y-\overline{x}_{j}|)^{\frac{N-2s}{2}+\tau}}+
\sum_{j=1}^k\frac{1}{(1+|y-\underline{x}_{j}|)^{\frac{N-2s}{2}+\tau}}\Big)\\
&\leqslant C\Big(\frac{1}{\mu}\Big)^{m-1+2\epsilon}.
\end{align*}
We have
\begin{align*}
&\quad\int_{\R^N}K\Big(\frac{|y|}{\mu}\Big)(W_{r,h,\Lambda}+\phi)^{2^*_s-1}\frac{\partial W_{r,h,\Lambda}}{\partial h}\nonumber\\
&=\int_{\R^N}K\Big(\frac{|y|}{\mu}\Big)W_{r,h,\Lambda}^{2^*_s-1}\frac{\partial W_{r,h,\Lambda}}{\partial h}
+(2^*_s-1)\int_{\R^N}K\Big(\frac{|y|}{\mu}\Big)W_{r,h,\Lambda}^{2^*_s-2}\frac{\partial W_{r,h,\Lambda}}{\partial h}\phi
+O\Big(\int_{\R^N}|\phi|^{2^*_s}\Big).
\end{align*}
Furthermore, for every $\phi\in\E$, we obtain
\begin{align*}
&\quad\int_{\R^N}K\Big(\frac{|y|}{\mu}\Big)W_{r,h,\Lambda}^{2^*_s-2}\frac{\partial W_{r,h,\Lambda}}{\partial h}\phi\\
&=\int_{\R^N}K\Big(\frac{|y|}{\mu}\Big)\Big(W_{r,h,\Lambda}^{2^*_s-2}\frac{\partial W_{r,h,\Lambda}}{\partial h}-
\sum_{j=1}^kU^{2^*_s-2}_{\overline{x}_{j,\Lambda}}\frac{\partial U_{\overline{x}_{j},\Lambda}}{\partial h}
-\sum_{j=1}^kU^{2^*_s-2}_{\underline{x}_{j,\Lambda}}\frac{\partial U_{\underline{x}_{j},\Lambda}}{\partial h}\Big)\phi\\
&\quad+\int_{\R^N}\Big(K\Big(\frac{|y|}{\mu}\Big)-1\Big)\Big(\sum_{j=1}^kU^{2^*_s-2}_{\overline{x}_{j,\Lambda}}\frac{\partial U_{\overline{x}_{j},\Lambda}}{\partial h}
+\sum_{j=1}^kU^{2^*_s-2}_{\underline{x}_{j,\Lambda}}\frac{\partial U_{\underline{x}_{j},\Lambda}}{\partial h}\Big)\phi\nonumber\\
&=2k\int_{\Omega_{1}^{+}}K\Big(\frac{|y|}{\mu}\Big)\Big(W_{r,h,\Lambda}^{2^*_s-2}\frac{\partial W_{r,h,\Lambda}}{\partial h}-
\sum_{j=1}^kU^{2^*_s-2}_{\overline{x}_{j,\Lambda}}\frac{\partial U_{\overline{x}_{j},\Lambda}}{\partial h}
-\sum_{j=1}^kU^{2^*_s-2}_{\underline{x}_{j,\Lambda}}\frac{\partial U_{\underline{x}_{j},\Lambda}}{\partial h}\Big)\phi\nonumber\\
&\quad+2k\int_{\R^N}\Big(K\Big(\frac{|y|}{\mu}\Big)-1\Big)\Big(U^{2^*_s-2}_{\overline{x}_{1},\Lambda}\frac{\partial U_{\overline{x}_{1},\Lambda}}{\partial h}
+U^{2^*_s-2}_{\underline{x}_{1},\Lambda}\frac{\partial U_{\underline{x}_{1},\Lambda}}{\partial h}\Big)\phi.
\end{align*}

Since
\begin{align*}
\Big|\int_{\Omega_{1}^{+}}K\Big(\frac{|y|}{\mu}\Big)\Big(W_{r,h,\Lambda}^{2^*_s-2}\frac{\partial W_{r,h,\Lambda}}{\partial h}-
\sum_{j=1}^kU^{2^*_s-2}_{\overline{x}_{j,\Lambda}}\frac{\partial U_{\overline{x}_{j},\Lambda}}{\partial h}
-\sum_{j=1}^kU^{2^*_s-2}_{\underline{x}_{j,\Lambda}}\frac{\partial U_{\underline{x}_{j},\Lambda}}{\partial h}\Big)\phi\Big|\leqslant C\Big(\frac{1}{\mu}\Big)^{m-1+2\epsilon}
\end{align*}
and
\begin{align*}
&\quad\Big|\int_{\R^N}\Big(K\Big(\frac{|y|}{\mu}\Big)-1\Big)\Big(U^{2^*_s-2}_{\overline{x}_{1},\Lambda}\frac{\partial U_{\overline{x}_{1},\Lambda}}{\partial h}
+U^{2^*_s-2}_{\underline{x}_{1},\Lambda}\frac{\partial U_{\underline{x}_{1},\Lambda}}{\partial h}\Big)\phi\Big|\nonumber\\
&\leqslant
\Big|\int_{||y|-\mu r_0|\leqslant \sqrt{\mu}}\Big(K\Big(\frac{|y|}{\mu}\Big)-1\Big)\Big(U^{2^*_s-2}_{\overline{x}_{1},\Lambda}\frac{\partial U_{\overline{x}_{1},\Lambda}}{\partial h}
+U^{2^*_s-2}_{\underline{x}_{1},\Lambda}\frac{\partial U_{\underline{x}_{1},\Lambda}}{\partial h}\Big)\phi\Big|\nonumber\\
&\quad+
\Big|\int_{||y|-\mu r_0|\geqslant \sqrt{\mu}}\Big(K\Big(\frac{|y|}{\mu}\Big)-1\Big)\Big(U^{2^*_s-2}_{\overline{x}_{1},\Lambda}\frac{\partial U_{\overline{x}_{1},\Lambda}}{\partial h}
+U^{2^*_s-2}_{\underline{x}_{1},\Lambda}\frac{\partial U_{\underline{x}_{1},\Lambda}}{\partial h}\Big)\phi\Big|
\nonumber\\
&\leqslant C\Big(\frac{1}{\mu}\Big)^{m-1+2\epsilon},
\end{align*}
then by Lemma \ref{lemA7}, it is easy to show that
$$\Big(\frac{1}{\mu}\Big)^{m-1+2\epsilon} \leqslant \frac{1}{\mu^{N-2s-\frac{N-2s-m}{N-2s}-\frac{(N-2s-m)(N-2s-1)}{(N-2s)(N-2s+1)}+\theta}}.$$
\end{proof}

\begin{Prop}\label{pp32}
\begin{align}\label{32}
\frac{\partial F(r,h,\Lambda)}{\partial\Lambda}&=k\Big(-\frac{mA_1}{\Lambda^{m+1} \mu^m}-\frac{A_2(m-2)}{\Lambda^{m-1}\mu^m}(\mu r_{0}-r)^2
+\frac{B_{1}(N-2s)k^{N-2s}}{\Lambda^{N-2s+1}(r\sqrt{1-h^2})^{N-2s}}\nonumber\\
&\quad+\frac{B_2(N-2s)}{\Lambda^{N-2s+1}(rh)^{N-2s}}\frac{hk}{\sqrt{1-h^2}}
+O\Big(\frac{1}{\mu^{N-2s-\frac{N-2s-m}{N-2s}-\frac{(N-2s-m)(N-2s-1)^2}{(N-2s)(N-2s+1)}+\theta}}\Big)\Big),
\end{align}
where $  A_1,  B_{1}\ and\ B_{2}$ are some positive constants.
\end{Prop}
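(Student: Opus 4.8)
The plan is to follow the proof of Proposition~\ref{pp33} almost verbatim, with the parameter $h$ replaced throughout by $\Lambda$ and with attention to the different weights produced when the bubbles are differentiated in $\Lambda$. First I would use $F(r,h,\Lambda)=I(W_{r,h,\Lambda}+\phi)$ together with the fact that \eqref{213} forces $\langle I'(W_{r,h,\Lambda}+\phi),\cdot\rangle$ to be a linear combination of the $U^{2^*_s-2}_{\overline{x}_{j,\Lambda}}\overline{\Z}_{i,j}$ and $U^{2^*_s-2}_{\underline{x}_{j,\Lambda}}\underline{\Z}_{i,j}$, and that $\partial_\Lambda W_{r,h,\Lambda}=\sum_{j=1}^k\overline{\Z}_{3,j}+\sum_{j=1}^k\underline{\Z}_{3,j}$, to write
\begin{align*}
\frac{\partial F(r,h,\Lambda)}{\partial\Lambda}&=\frac{\partial I(W_{r,h,\Lambda})}{\partial\Lambda}+\Big\langle K\Big(\tfrac{|y|}{\mu}\Big)\big[(W_{r,h,\Lambda}+\phi)^{2^*_s-1}-W_{r,h,\Lambda}^{2^*_s-1}\big],\frac{\partial W_{r,h,\Lambda}}{\partial\Lambda}\Big\rangle\\
&\quad+\sum_{t=1}^3\sum_{j=1}^k c_t\Big\langle U^{2^*_s-2}_{\overline{x}_{j,\Lambda}}\overline{\Z}_{t,j}+U^{2^*_s-2}_{\underline{x}_{j,\Lambda}}\underline{\Z}_{t,j},\frac{\partial\phi}{\partial\Lambda}\Big\rangle.
\end{align*}
The last double sum is handled exactly as in Proposition~\ref{pp33}: the orthogonality built into $\E$ lets one move the $\Lambda$-derivative onto the bubble factor, $\langle U^{2^*_s-2}_{\overline{x}_{j,\Lambda}}\overline{\Z}_{t,j},\partial_\Lambda\phi\rangle=-\langle\partial_\Lambda(U^{2^*_s-2}_{\overline{x}_{j,\Lambda}}\overline{\Z}_{t,j}),\phi\rangle$ (and analogously for the lower bubbles), and then the pointwise bound $|\partial_\Lambda(U^{2^*_s-2}_{\overline{x}_{j,\Lambda}}\overline{\Z}_{t,j})|\leqslant C(1+r\pmb{\delta_{t2}})(1+|y-\overline{x}_{j}|)^{-(N+2s)}$, Lemma~\ref{lemA1} and the bounds \eqref{223}, \eqref{224} on $\|\phi\|_*$ and $|c_t|$ show this term is $O\big(\mu^{-(m-1+2\epsilon)}\big)$.

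Next I would Taylor-expand the bracket in the middle term, producing the linear contribution $(2^*_s-1)\int_{\R^N}K(|y|/\mu)W_{r,h,\Lambda}^{2^*_s-2}(\partial_\Lambda W_{r,h,\Lambda})\phi$ together with $O\big(\int_{\R^N}|\phi|^{2^*_s}\big)=O\big(k\|\phi\|_*^{2^*_s}\big)$, the latter being exactly the quantity already bounded in the proof of Proposition~\ref{pp31}. Since $\partial_\Lambda U_{\overline{x}_{j},\Lambda}=\overline{\Z}_{3,j}$ and $\phi$ is orthogonal to $U^{2^*_s-2}_{\overline{x}_{j,\Lambda}}\overline{\Z}_{3,j}$ (and likewise for the lower bubbles), subtracting $\sum_{j}U^{2^*_s-2}_{\overline{x}_{j,\Lambda}}\overline{\Z}_{3,j}+\sum_{j}U^{2^*_s-2}_{\underline{x}_{j,\Lambda}}\underline{\Z}_{3,j}$ from $W^{2^*_s-2}_{r,h,\Lambda}\partial_\Lambda W_{r,h,\Lambda}$ leaves only an interaction remainder and, after multiplication by $K(|y|/\mu)-1$, a potential defect. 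Splitting $\R^N$ into $\{\big||y|-\mu r_0\big|\leqslant\sqrt{\mu}\}$ and its complement and using the algebraic decay of the bubbles, Lemma~\ref{lemA1} and hypothesis \eqref{eq13} — exactly as for $J_1,J_2$ in Lemma~\ref{lem 24} and for $H_2$ in the proof of Lemma~\ref{lem 21} — both pieces are again $O\big(\mu^{-(m-1+2\epsilon)}\big)$, which by Lemma~\ref{lemA7} is absorbed into $O\big(\mu^{-(N-2s-\frac{N-2s-m}{N-2s}-\frac{(N-2s-m)(N-2s-1)^2}{(N-2s)(N-2s+1)}+\theta)}\big)$.

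Finally, the leading part $\partial_\Lambda I(W_{r,h,\Lambda})$ is computed parallel to Proposition~\ref{pp31}, using the energy and interaction expansions of the Appendix (in particular Lemma~\ref{lemA7}); it produces exactly the four main terms $-mA_1\Lambda^{-m-1}\mu^{-m}$, $-A_2(m-2)\Lambda^{-(m-1)}\mu^{-m}(\mu r_0-r)^2$, $B_1(N-2s)k^{N-2s}\Lambda^{-(N-2s+1)}(r\sqrt{1-h^2})^{-(N-2s)}$ and $B_2(N-2s)\Lambda^{-(N-2s+1)}(rh)^{-(N-2s)}hk/\sqrt{1-h^2}$, with remainder of the stated order, the $\Lambda$-derivative causing no loss since on $\D$ the variable $\Lambda$ stays bounded away from $0$ and $\infty$. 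I expect the main obstacle to lie in the bookkeeping of the second step: one must verify that differentiating the ansatz in $\Lambda$ does not lower the order of the interaction integral and of the potential-defect integral — in particular that the new upper-lower interaction carried by the coefficient $B_2$ (absent in the single-circle constructions) differentiates cleanly — and that $O(\mu^{-(m-1+2\epsilon)})$, $O(k\|\phi\|_*^{2^*_s})$ and the $c_t$-term are all dominated by the claimed power of $\mu$; this forces $\epsilon$ (equivalently $\tau$) to be taken small and uses the precise relations $\mu=k^{(N-2s)/(N-2s-m)}$ and $h=O\big(k^{-(N-2s-1)/(N-2s+1)}\big)$ from \eqref{qe}.
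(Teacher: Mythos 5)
Your proposal is correct and follows exactly the route the paper intends: the paper's own ``proof'' of Proposition \ref{pp32} is just the remark that it is similar to Proposition \ref{pp33}, and your write-up is a faithful transcription of that argument with $\partial_h$ replaced by $\partial_\Lambda$ and the leading term supplied by the Appendix expansion. The only slip is a citation: the leading part $\partial_\Lambda I(W_{r,h,\Lambda})$ comes from Lemma \ref{lemA6}, not Lemma \ref{lemA7} (the latter is the $h$-derivative expansion, used only to absorb the $O(\mu^{-(m-1+2\epsilon)})$ remainders).
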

\begin{proof}
The proof of this Lemma is similar to Proposition \ref{pp33}, and we omit it here.
\end{proof}

Now we  analyze $F(r,h,\Lambda),\frac{\partial F(r,h,\Lambda)}{\partial  h}\ \text{and} \ \frac{\partial F(r,h,\Lambda)}{\partial  \Lambda}$.

Let $h_0$ be the solution of

$$
\frac{B_2(N-2s-1)k}{h^{N-2s}\sqrt{1-h^2}}
-\frac{B_1(N-2s)k^{N-2s}h}{(\sqrt{1-h^2})^{N-2s+2}}=0.
$$
It holds that
$$h_0=\frac{1}{\Big(1+\Big(\frac{B_1(N-2s)k^{N-2s-1}}{B_2}\Big)^{\frac{2}{N-2s+1}}\Big)^{\frac{1}{2}}}=O\Big(\frac{1}{k^{\frac{N-2s-1}{N-2s+1}}}\Big).$$
Let $\Lambda_0$ be the solution of
$$-\frac{mA_1}{\Lambda^{m+1} \mu^m}
+\frac{B_{1}(N-2s)k^{N-2s}}{\Lambda^{N-2s+1}\Big(r\sqrt{1-h_0^2}\Big)^{N-2s}}
+\frac{B_2(N-2s)}{\Lambda^{N-2s+1}(rh_0)^{N-2s}}\frac{hk}{\sqrt{1-h_0^2}}=0.$$
We get
$$\Lambda_0=\frac{1}{\mu(mA_1)^{\frac{1}{N-2s-m}}}\Big(\frac{B_1(N-2s)k^{N-2s}}{\Big(\sqrt{1-h_0^2}\Big)^{N-2s}}+\frac{B_2(N-2s)k}{h_0^{N-2s-1}\sqrt{1-h_0^2}}\Big)^{\frac{1}{N-2s-m}}.$$
Since
$$\frac{r}{\mu}=r_0+O(\frac{1}{\mu^{1+\theta}}),\ \ h=h_0+o(\frac{1}{\mu^\theta})$$
and for some positive constants $B_3, B_4, B_5$ and $B_6$, by \eqref{31} we get
\begin{align}\label{b6}
&\quad\frac{B_{1}k^{N-2s}}{(r\sqrt{1-h^2})^{N-2s}}+\frac{B_2}{(rh)^{N-2s}}\frac{hk}{\sqrt{1-h^2}}\\
&=\frac{B_{3}}{\mu^{m}}+\frac{B_4}{\mu^{N-2s-\frac{N-2s-m}{N-2s}-\frac{(N-2s-m)(N-2s-1)^2}{(N-2s)(N-2s+1)}}}
+\frac{B_5}{\mu^{m+\epsilon}}+\frac{B_6}{\mu^{N-2s-\frac{N-2s-m}{N-2s}-\frac{(N-2s-m)(N-2s-1)^2}{(N-2s)(N-2s+1)}}}\Big(1-\frac{h}{h_0}\Big)^2\nonumber\\
&\quad+O\Big(\frac{1}{\mu^{N-2s-\frac{N-2s-m}{N-2s}-\frac{(N-2s-m)(N-2s-1)^2}{(N-2s)(N-2s+1)}}}\Big(1-\frac{h}{h_0}\Big)^3\Big).\nonumber
\end{align}

\medskip

Rewrite \eqref{31}, \eqref{32} and \eqref{33} respectively as
\begin{align}\label{34}
F(r,h,\Lambda)&=k\Big(A+\frac{A_1}{\Lambda^m \mu^m}+\frac{A_2}{\Lambda^{m-2}\mu^m}(\mu r_{0}-r)^2
-\frac{B_{3}}{\Lambda^{N-2s}\mu^{m}}+\frac{C}{\mu^m}|\mu r_0-r|^{2+\theta}
\nonumber\\
&\quad
-\frac{B_4}{\Lambda^{N-2s}\mu^{N-2s-\frac{N-2s-m}{N-2s}-\frac{(N-2s-m)(N-2s-1)^2}{(N-2s)(N-2s+1)}}}
+\frac{B_6}{\Lambda^{N-2s}\mu^{N-2s-\frac{N-2s-m}{N-2s}-\frac{(N-2s-m)(N-2s-1)^2}{(N-2s)(N-2s+1)}}}\Big(1-\frac{h}{h_0}\Big)^2\nonumber\\
&\quad+O\Big(\frac{1}{\mu^{N-2s-\frac{N-2s-m}{N-2s}-\frac{(N-2s-m)(N-2s-1)^2}{(N-2s)(N-2s+1)}}}\Big(1-\frac{h}{h_0}\Big)^3
+\frac{1}{\mu^{N-2s-\frac{N-2s-m}{N-2s}-\frac{(N-2s-m)(N-2s-1)^2}{(N-2s)(N-2s+1)}+\theta}}\Big)\Big),
\end{align}

\begin{align}\label{35}
\frac{\partial F(r,h,\Lambda)}{\partial\Lambda}&=k\Big(-\frac{mA_1}{\Lambda^{m+1} \mu^m}+\frac{A_2(m-2)}{\Lambda^{m-1}\mu^m}\Big(\mu r_{0}-r\Big)^2
+\frac{B_{3}(N-2s)}{\Lambda^{N-2s+1}\mu^{m}}
+O\Big(\frac{1}{\mu^m}|\mu r_0-r|^{2+\theta}\Big)\Big),
\end{align}
and
\smallskip
\begin{align}\label{36}
\frac{\partial F(r,h,\Lambda)}{\partial h}&=\frac{k}{\Lambda^{N-2s}}\Big(\frac{B_7}{\mu^{N-2s-\frac{N-2s-m}{N-2s}-\frac{(N-2s)(N-2s-m)}{N-2s+1}}}\Big(1-\frac{h}{h_0}\Big)+
O\Big(\frac{1}{\mu^{N-2s-\frac{N-2s-m}{N-2s}-\frac{(N-2s)(N-2s-m)}{N-2s+1}}}\Big(1-\frac{h}{h_0}\Big)^2\Big)
\Big)\nonumber\\
&\quad+kO\Big(\frac{1}{\mu^{N-2s-\frac{N-2s-m}{N-2s}-\frac{(N-2s-m)(N-2s-1)}{(N-2s)(N-2s+1)}+\theta}}\Big),
\end{align}
where $A, A_1, A_2, B_3, B_4, B_5, B_6,$ and $B_7$ are positive constants defined in Lemma \ref{lemA5} and \eqref{b6}.
\smallskip

Define
$$\bar{F}(r,h,\Lambda)=-F(r,h,\Lambda),\quad\quad (r,h,\Lambda)\in \D.$$

Let
$$\alpha_2=k(-A+\beta),$$
and
$$\alpha_1=k\Big(-A-\Big(\frac{A_1}{\Lambda^{m}_0}-\frac{B_{3}}{\Lambda^{N-2s}_0}\Big)
\frac{1}{\mu^m}-\frac{1}{\mu^{m+\frac{5}{2}\theta}}\Big),$$
where $\theta,\beta>0$ is a small constant.\\
\smallskip

Denote the energy level set
$$\bar{F}^\alpha=\{(r,h,\Lambda)|(r,h,\Lambda)\in\D,\bar{F}
(r,h,\Lambda)\leqslant \alpha\}.$$
Consider
\begin{equation*}
\begin{cases}
\frac{dr}{dt}=-D_r\bar{F},t>0;\\
\frac{dh}{dt}=-D_h\bar{F},t>0;\\
\frac{d\Lambda}{dt}=-D_\Lambda\bar{F},t>0;\\
(r,h,\Lambda)\in \bar{F}^{\alpha_2}.
\end{cases}
\end{equation*}

\begin{Prop}\label{34}
The flow $(r(t),h(t),\Lambda(t))$ does not leave $\D$ before it reaches $\bar{F}^{\alpha_{1}}$.
\begin{proof}
If $\Lambda=\Lambda_0+\frac{1}{\mu^{\frac{3}{2}\theta}}$, such that $|r-\mu r_0|\leqslant\frac{1}{\mu^\theta}$, and $|h-h_0|\leqslant\frac{1}{\mu^{\theta}}$, we  gain from  \eqref{35} that
$$\frac{\partial \bar{F}(W_{r,h,\Lambda})}{\partial\Lambda}=k\Big(\frac{c}{\mu^{m+\frac{3}{2}\theta}}+O\Big(\frac{1}{\mu^{m+2\theta}}\Big)\Big)>0,$$
where $c$ is a positive constant. Thus, the flow does not leave $\D$.\\

Similarly, if $\Lambda=\Lambda_0-\frac{1}{\mu^{\frac{3}{2}\theta}}$, such that $|r-\mu r_0|\leqslant\frac{1}{\mu^\theta}$, and $|h-h_0|\leqslant\frac{1}{\mu^{\theta}}$, we  gain from  \eqref{35} that
$$\frac{\partial \bar{F}(W_{r,h,\Lambda})}{\partial\Lambda}=k\Big(-\frac{c}{\mu^{m+\frac{3}{2}\theta}}+O\Big(\frac{1}{\mu^{m+2\theta}}\Big)\Big)<0.$$
where $c$ is a positive constant. Thus, the flow does not leave $\D$.\\

If $h=h_0+\frac{1}{\mu^{\theta}}$, such that $|r-\mu r_0|\leqslant\frac{1}{\mu^\theta}$, and $|\Lambda-\Lambda_0|\leqslant\frac{1}{\mu^{\frac{3}{2}\theta}}$, we  gain from \eqref{36}  that
$$\frac{\partial \bar{F}(W_{r,h,\Lambda})}{\partial h}=-\frac{k}{\Lambda^{N-2s}}\Big(\frac{B_7}{\mu^{N-2s-\frac{N-2s-m}{N-2s}-\frac{(N-2s)(N-2s-m)}{N-2s+1}+\theta}}+
O\Big(\frac{1}{\mu^{N-2s-\frac{N-2s-m}{N-2s}-\frac{(N-2s)(N-2s-m)}{N-2s+1}+2\theta}}
\Big)\Big)<0.$$
Thus, the flow does not leave $\D$.\\

Similarly, if $h=h_0-\frac{1}{\mu^{\theta}}$, such that $|r-\mu r_0|\leqslant\frac{1}{\mu^\theta}$, and $|\Lambda-\Lambda_0|\leqslant\frac{1}{\mu^{\frac{3}{2}\theta}}$, we  gain from  \eqref{36} that
$$\frac{\partial \bar{F}(W_{r,h,\Lambda})}{\partial h}=\frac{k}{\Lambda^{N-2s}}\Big(\frac{B_7}{\mu^{N-2s-\frac{N-2s-m}{N-2s}-\frac{(N-2s)(N-2s-m)}{N-2s+1}+\theta}}+
O\Big(\frac{1}{\mu^{N-2s-\frac{N-2s-m}{N-2s}-\frac{(N-2s)(N-2s-m)}{N-2s+1}+2\theta}}
\Big)\Big)>0.$$
Thus, the flow does not leave $\D$.\\

Assuming $|r-\mu r_0|=\frac{1}{\mu^\theta}$, since $|\Lambda-\Lambda_0|\leqslant\frac{1}{\mu^{\frac{3}{2}\theta}},|h-h_0|\leqslant\frac{1}{\mu^{\theta}}$,
we get
\begin{align}\label{37}
\frac{B_{3}}{\Lambda^{N-2s}}-\frac{A_1}{\Lambda^{m}}&=\Big(\frac{B_{3}}{\Lambda^{N-2s}_0}-\frac{A_1}{\Lambda^{m}_0}\Big)+O\Big(|\Lambda-\Lambda_0|^2\Big)\nonumber\\
&=\Big(\frac{B_{3}}{\Lambda^{N-2s}_0}-\frac{A_1}{\Lambda^{m}_0}\Big)+O\Big(\frac{1}{\mu^{3\theta}}\Big).
\end{align}
So, using \eqref{34}, \eqref{37}, we obtain
\begin{align}\label{38}
\bar{F}(r,h,\Lambda)=k\Big(-A+\Big(\frac{B_{3}}{\Lambda^{N-2s}_0}-\frac{A_1}{\Lambda^{m}_0}\Big)\frac{1}{\mu^m}-\frac{A_2}{\Lambda^{m-2}\mu^{m+2\theta}}
+O\Big(\frac{1}{\mu^{m+\frac{5}{2}\theta}}\Big)\Big)<\alpha_1.
\end{align}
\end{proof}
\end{Prop}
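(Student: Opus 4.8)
The plan is the usual gradient-flow argument. Along the flow, $\frac{d}{dt}\bar F(r(t),h(t),\Lambda(t))=-|\nabla\bar F|^2\le 0$, so $\bar F$ is non-increasing; hence it suffices to show that the trajectory cannot cross any of the six faces of $\partial\D$ (see \eqref{eq15}) as long as $\bar F>\alpha_1$. I would split the faces into three pairs: on the $\Lambda$- and $h$-faces I would show the flow points strictly into $\D$, and on the two $r$-faces I would show that $\bar F<\alpha_1$ already holds.

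On $\Lambda=\Lambda_0\pm\mu^{-\frac{3}{2}\theta}$ (with $|r-\mu r_0|\le\mu^{-\theta}$ and $|h-h_0|\le\mu^{-\theta}$) I would substitute these constraints into the expansion \eqref{35} of $\partial_\Lambda F$. Since $\Lambda_0$ is by definition the zero of the principal part $-\frac{mA_1}{\Lambda^{m+1}\mu^m}+\frac{(N-2s)B_3}{\Lambda^{N-2s+1}\mu^m}$ of \eqref{35}, a one-term Taylor expansion at $\Lambda_0$ gives $\partial_\Lambda F=\mp k\,c\,\mu^{-m-\frac{3}{2}\theta}+O(k\mu^{-m-2\theta})$ for a constant $c\neq 0$ whose sign is fixed by $m<N-2s$ (it is exactly the sign for which $\Lambda_0$ is a nondegenerate critical point of $\Lambda\mapsto F$). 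Because $\frac{3}{2}\theta<2\theta$, the first term dominates, so $\partial_\Lambda\bar F=-\partial_\Lambda F$ has the sign that makes $\frac{d\Lambda}{dt}=-\partial_\Lambda\bar F$ push $\Lambda$ back towards $\Lambda_0$ on both faces. The faces $h=h_0\pm\mu^{-\theta}$ are handled the same way from \eqref{36}: $h_0$ is chosen so that the leading part of $\partial_h F$ vanishes at $h_0$, so \eqref{36} is an expansion in $1-h/h_0$ with positive leading coefficient; since on each $h$-face $1-h/h_0$ has a definite nonzero sign (negative on the upper one, positive on the lower one) and the leading term beats the remaining $O$-terms of \eqref{36}, $\frac{dh}{dt}$ again points inward on each $h$-face. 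Consequently the trajectory can only leave $\D$ through an $r$-face.

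To exclude that, on $r=\mu r_0\pm\mu^{-\theta}$ I would not inspect $\partial_r\bar F$ but instead evaluate $\bar F$ itself from the energy expansion \eqref{31} (rewritten by means of \eqref{b6}). There $(\mu r_0-r)^2=\mu^{-2\theta}$, and since $\partial_\Lambda\big(\frac{B_3}{\Lambda^{N-2s}}-\frac{A_1}{\Lambda^m}\big)$ vanishes at $\Lambda_0$, Taylor-expanding at $\Lambda_0$ (using $|\Lambda-\Lambda_0|\le\mu^{-\frac{3}{2}\theta}$) yields
\[
\bar F(r,h,\Lambda)=k\Big(-A-\Big(\frac{A_1}{\Lambda_0^{m}}-\frac{B_3}{\Lambda_0^{N-2s}}\Big)\frac{1}{\mu^m}-\frac{A_2}{\Lambda^{m-2}\mu^{m+2\theta}}+O\Big(\frac{1}{\mu^{m+\frac{5}{2}\theta}}\Big)\Big).
\]
The defining equation of $\Lambda_0$ gives $\Lambda_0^{N-2s-m}=\frac{(N-2s)B_3}{mA_1}$, whence $\frac{A_1}{\Lambda_0^{m}}-\frac{B_3}{\Lambda_0^{N-2s}}=\frac{A_1}{\Lambda_0^{m}}\big(1-\frac{m}{N-2s}\big)>0$; and since $2\theta<\frac{5}{2}\theta$ the negative term $-\frac{A_2}{\Lambda^{m-2}}\mu^{-m-2\theta}$ dominates both the $O$-error above and the $-\mu^{-m-\frac{5}{2}\theta}$ appearing in the definition of $\alpha_1$. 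Hence $\bar F<\alpha_1$ throughout each $r$-face, so any trajectory touching such a face has already entered $\bar F^{\alpha_1}$. Combining the three cases proves the proposition.

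The step I expect to be the main obstacle is the uniformity claimed in the second paragraph: once the parts killed by the defining equations of $\Lambda_0$ and $h_0$ are removed, one must verify that the surviving linear terms dominate every $O$-remainder \emph{uniformly over $\D$}. For $\partial_\Lambda\bar F$ this reduces to the exponent comparison $\frac{3}{2}\theta<2\theta$; for $\partial_h\bar F$ one must also account for the smallness of $h_0=O(k^{-(N-2s-1)/(N-2s+1)})$ and work in the rescaled variable $1-h/h_0$, after which the relevant condition is again an inequality between fixed powers of $\mu$ built from $N$, $s$, $m$ and $\theta$. One also has to confirm that $\Lambda_0$ (which is of order a constant) and $h_0$ sit strictly inside the windows of \eqref{eq15}, so that all the Taylor expansions used above are legitimate; everything else is a routine gradient-flow argument.
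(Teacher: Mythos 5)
Your proposal is correct and follows essentially the same route as the paper: inward-pointing derivative signs on the $\Lambda$- and $h$-faces via the expansions \eqref{35} and \eqref{36} (with $\Lambda_0$, $h_0$ killing the leading terms and the exponent comparison $\tfrac32\theta<2\theta$, resp.\ $\theta<2\theta$, controlling the remainders), and the energy bound $\bar F<\alpha_1$ on the $r$-faces via \eqref{34} together with the second-order Taylor expansion in $\Lambda-\Lambda_0$ that is the paper's \eqref{37}. Your extra verifications (the sign of $f'(\Lambda_0)$ from $m<N-2s$ and the identity $\frac{A_1}{\Lambda_0^m}-\frac{B_3}{\Lambda_0^{N-2s}}=\frac{A_1}{\Lambda_0^m}(1-\frac{m}{N-2s})>0$) are consistent with, and slightly more explicit than, the paper's argument.
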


\begin{proof}[\textbf{Proof of Theorem \ref{th2}}:]

Define
\begin{align*}
\Gamma=\Big\{&f:f(r,h,\Lambda)=\Big(f_1(r,h,\Lambda),f_2(r,h,\Lambda),f_3(r,h,\Lambda)\Big)
\in \D,(r,h,\Lambda)\in \D,\nonumber\\
&f(r,h,\Lambda)=(r,h,\Lambda),
\text{if}\  |r-\mu r_0|=\frac{1}{\mu^\theta}\Big\}.
\end{align*}

Let
$$c=\mathop{\inf}\limits_{f\in\tau}\mathop{\max}\limits_{(r,h,\Lambda)\in \D}\bar{F}(f(r,h,\Lambda)).$$
In fact we can follow \cite{JS} to prove\\

$(i)\alpha_1<c<\alpha_2;$\\

$(ii)\mathop{\sup}\limits_{|r-\mu r_0|=\frac{1}{\mu^\theta}}\bar{F}(f(r,h,\Lambda))<\alpha_1,\forall f\in\Gamma.$

 Thus $c$ is a critical value of $\bar{F}$.

\end{proof}

\section{The non-degeneracy of the solutions}
In this section, we prove the non-degeneracy of the 2k-bubbling solutions constructed by Theorem \ref{th2}. Recall
\begin{equation}\label{eq41}
(-\Delta)^s u= K\Big(\frac{|y|}{\mu}\Big) u^{2_s^*-1},
\end{equation}

\begin{equation*}
(-\Delta)^s \xi=(2^*_s-1) K\Big(\frac{|y|}{\mu}\Big) u^{2_s^*-2}\xi,
\end{equation*}
 and the solution $u_k$ for \eqref{eq41}, satisfying $$\displaystyle u_k=W_{r,h,\Lambda}+\omega_k,$$
where $\displaystyle W_{r,h,\Lambda}=\sum_{i=1}^k U_{\overline{x}_{i},\Lambda}+\sum_{i=1}^k U_{\underline{x}_{i,\Lambda}}.$

In order to apply local Pohozaev identities, we quote the extension of $\tilde u$ and $\tilde\xi$ to have
\begin{equation*}
\begin{cases}
\displaystyle div(t^{1-2s}\nabla\tilde u)=0,  &{x\in\R^{N+1}_+},\\
\displaystyle-\lim_{t\rightarrow0}t^{1-2s}\partial_t\tilde u(y,t)= K\Big(\frac{|y|}{\mu}\Big)u^{2^*_s-1}   &{x\in\R^{N}},
\end{cases}
\end{equation*}
and
\begin{equation*}
\begin{cases}
\displaystyle div(t^{1-2s}\nabla\tilde \xi)=0,  &{x\in\R^{N+1}_+},\\
\displaystyle-\lim_{t\rightarrow0}t^{1-2s}\partial_t\tilde \xi(y,t)= (2^*_s-1) K\Big(\frac{|y|}{\mu}\Big) u^{2_s^*-2}\xi  &{x\in\R^{N}}.
\end{cases}
\end{equation*}
For any smooth domain $\Omega$ in $\R^N$,
we set
\begin{align*}
\Omega^+=\{(y,t),y\in\Omega,t>0\}\subseteq\R^{N+1}_+,\\
\partial''\Omega^+=\{(y,t),y\in\partial\Omega,t>0\}\subseteq\R^{N+1}_+.
\end{align*}
There hold the following  Pohozaev identities.

\begin{Lem}\label{lem41}
There hold that
\begin{align}\label{42}
&\quad-\int_{\partial''\Omega^+}t^{1-2s}\frac{\partial\tilde u}{\partial\nu}\frac{\partial\tilde \xi}{\partial y_i}
-\int_{\partial''\Omega^+}t^{1-2s}\frac{\partial\tilde \xi}{\partial\nu}\frac{\partial\tilde u}{\partial y_i}
+ \int_{\partial''\Omega^+}t^{1-2s}\langle\nabla\tilde u,\nabla\tilde\xi\rangle\nu_i\nonumber\\
&=\int_{\Omega}\frac{\partial K\Big(\frac{|y|}{\mu}\Big)}{\partial y_i}u^{2^*_s-1}\xi+\int_{\partial \Omega} K\Big(\frac{|y|}{\mu}\Big)u^{2^*_s-1}\xi\nu_i,
\end{align}
and
\begin{align}\label{43}
&\quad\int_{\Omega}u^{2^*_s-1}\xi\langle\nabla K\Big(\frac{|y|}{\mu}\Big),y-x_0\rangle\\
&=\int_{\partial \Omega}K\Big(\frac{|y|}{\mu}\Big)u^{2^*_s-1}\xi\langle\nu,y-x_0\rangle
+\int_{\partial''\Omega^+}t^{1-2s}\frac{\partial\tilde u}{\partial\nu}\langle\nabla\tilde\xi,Y-X_0\rangle
+\int_{\partial''\Omega^+}t^{1-2s}\frac{\partial\tilde \xi}{\partial\nu}\langle\nabla\tilde u,Y-X_0\rangle\nonumber\\&
\quad-\int_{\partial''\Omega^+}t^{1-2s}\langle\nabla\tilde u,\nabla\tilde\xi\rangle\langle\nu,Y-X_0\rangle
+\frac{N-2s}2\int_{\partial''\Omega^+}t^{1-2s}\tilde\xi\frac{\partial\tilde u}{\partial\nu}
+\frac{N-2s}2\int_{\partial''\Omega^+}t^{1-2s}\tilde u\frac{\partial\tilde\xi}{\partial\nu},\nonumber
\end{align}
where $Y=(y,t),X_0=(x_0,0)\in\Omega^+$.
\end{Lem}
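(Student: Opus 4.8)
The plan is to derive both identities by the standard integration-by-parts technique applied to the extended functions $\tilde u$ and $\tilde\xi$, which are degenerate-harmonic in $\R^{N+1}_+$ with the prescribed Neumann data on $\{t=0\}$. The two identities are of different type: \eqref{42} is a ``translation'' Pohozaev identity (pairing the equation for $\tilde u$ against $\partial_{y_i}\tilde\xi$ and symmetrically), while \eqref{43} is a ``dilation'' Pohozaev identity (pairing against $\langle\nabla\tilde\xi,Y-X_0\rangle$ plus the Euler term $\tfrac{N-2s}{2}\tilde\xi$).

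For \eqref{42}: first I would multiply $\operatorname{div}(t^{1-2s}\nabla\tilde u)=0$ by $\partial_{y_i}\tilde\xi$ and integrate over $\Omega^+$; integrating by parts produces a boundary term $\int_{\partial''\Omega^+}t^{1-2s}\frac{\partial\tilde u}{\partial\nu}\partial_{y_i}\tilde\xi$, a term on $\{t=0\}$ which by the Neumann condition becomes $\int_\Omega K(|y|/\mu)u^{2^*_s-1}\partial_{y_i}\xi$, and the volume term $-\int_{\Omega^+}t^{1-2s}\langle\nabla\tilde u,\nabla\partial_{y_i}\tilde\xi\rangle$. Doing the symmetric computation with $u$ and $\xi$ interchanged, adding, and using $\langle\nabla\tilde u,\nabla\partial_{y_i}\tilde\xi\rangle+\langle\nabla\tilde\xi,\nabla\partial_{y_i}\tilde u\rangle=\partial_{y_i}\langle\nabla\tilde u,\nabla\tilde\xi\rangle$, the sum of volume terms collapses to $\int_{\Omega^+}t^{1-2s}\partial_{y_i}\langle\nabla\tilde u,\nabla\tilde\xi\rangle=\int_{\partial''\Omega^+}t^{1-2s}\langle\nabla\tilde u,\nabla\tilde\xi\rangle\nu_i$ (the $\{t=0\}$ contribution vanishes since $t^{1-2s}$ carries no $y_i$-dependence and $\nu_i=0$ there). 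On the $\{t=0\}$ slice one is left with $\int_\Omega K u^{2^*_s-1}\partial_{y_i}\xi+\int_\Omega (2^*_s-1)K u^{2^*_s-2}\xi\,\partial_{y_i}u$; writing this as $\int_\Omega \partial_{y_i}\!\big(K u^{2^*_s-1}\xi\big)-\int_\Omega \xi u^{2^*_s-1}\partial_{y_i}K$ and applying the divergence theorem in $\R^N$ gives exactly the right-hand side of \eqref{42}.

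For \eqref{43}: the approach is analogous but now I pair $\operatorname{div}(t^{1-2s}\nabla\tilde u)=0$ against $\langle\nabla\tilde\xi,Y-X_0\rangle+\tfrac{N-2s}{2}\tilde\xi$, integrate over $\Omega^+$, and do the same with $u,\xi$ swapped, then add. The vector field $Y-X_0$ has divergence $N+1$ in $\R^{N+1}$, and the weight $t^{1-2s}$ contributes an extra factor: $\operatorname{div}\big(t^{1-2s}(Y-X_0)\big)=(N+2-2s)t^{1-2s}$ away from $t=0$. The bookkeeping of volume terms — the quadratic form $t^{1-2s}\langle\nabla\tilde u,\nabla\tilde\xi\rangle$ dilated by the radial field, the cross terms with $\tfrac{N-2s}{2}\tilde\xi$, and the homogeneity of the critical nonlinearity on the boundary slice — is arranged precisely so that all interior contributions cancel, leaving only the listed boundary integrals over $\partial''\Omega^+$ and $\partial\Omega$, together with the bulk term $\int_\Omega u^{2^*_s-1}\xi\,\langle\nabla K(|y|/\mu),y-x_0\rangle$ coming from differentiating $K$. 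I expect the main obstacle to be the careful treatment of the behaviour near the degenerate boundary $\{t=0\}$: one must justify that the weighted boundary integrals on $\{t=0,\ y\in\Omega\}$ converge and combine correctly given the $t^{1-2s}$ weight (using $\tilde u,\tilde\xi\in C^\infty(\R^{N+1}_+)$ together with the known decay of $\nabla\tilde u$ in $L^2(t^{1-2s})$), and to verify that no extra term arises from the corner $\partial\Omega\cap\{t=0\}$. Once the limit $t\to0^+$ of the weighted normal derivatives is identified via the Neumann conditions, both identities follow from Gauss–Green in $\Omega^+$.
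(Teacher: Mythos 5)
The paper does not actually prove this lemma; it simply cites \cite{GNNT}, so your write-up is the only argument on the table, and the route you take (testing $\operatorname{div}(t^{1-2s}\nabla\tilde u)=0$ against $\partial_{y_i}\tilde\xi$, resp.\ against $\langle\nabla\tilde\xi,Y-X_0\rangle+\tfrac{N-2s}{2}\tilde\xi$, symmetrizing in $u,\xi$, and converting the $\{t=0\}$ contributions via the Neumann data) is exactly the standard one behind such extended Pohozaev identities. Your derivation of \eqref{42} is correct, with one caveat: carried out as you describe, the $\{t=0\}$ slice produces $\int_\Omega K\,\partial_{y_i}(u^{2^*_s-1}\xi)=\int_{\partial\Omega}Ku^{2^*_s-1}\xi\,\nu_i-\int_\Omega(\partial_{y_i}K)u^{2^*_s-1}\xi$, so the volume term on the right-hand side comes with a \emph{minus} sign, not the plus sign printed in \eqref{42}; this is evidently a typo in the statement, since the paper's own application \eqref{45} uses the minus sign, but you should not claim to recover ``exactly'' the printed right-hand side. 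For \eqref{43} your outline is correct but the cancellation you assert deserves to be displayed rather than asserted: with $V=Y-X_0$ one has $\nabla V=I$ and $\operatorname{div}(t^{1-2s}V)=(N+2-2s)t^{1-2s}$, so after symmetrizing the interior terms reduce to
\begin{align*}
\int_{\partial''\Omega^+}t^{1-2s}\langle\nabla\tilde u,\nabla\tilde\xi\rangle\langle V,\nu\rangle-(N-2s)\int_{\Omega^+}t^{1-2s}\langle\nabla\tilde u,\nabla\tilde\xi\rangle,
\end{align*}
the second of which is precisely absorbed by the Euler multipliers $\tfrac{N-2s}{2}\tilde\xi$, $\tfrac{N-2s}{2}\tilde u$, while on $\{t=0\}$ the identity $\tfrac{N-2s}{2}\,2^*_s=N=\operatorname{div}_y(y-x_0)$ makes the bulk terms in $\int_\Omega Ku^{2^*_s-1}\xi$ cancel and leaves the $\langle\nabla K,y-x_0\rangle$ term. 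Finally, your closing remark about justifying convergence near $\{t=0\}$ and at the corner $\partial\Omega\times\{0\}$ is the genuinely delicate point (one should work on $\Omega\times(\varepsilon,T)$ and pass to the limit using $t^{1-2s}|\nabla\tilde u||\nabla\tilde\xi|\in L^1_{loc}$ and the regularity of $\tilde u,\tilde\xi$); flagging it without executing it leaves your argument at the level of a detailed sketch, but the approach is sound and would go through.
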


\begin{proof}

For a similar proof, we can refer to \cite{GNNT}.
\end{proof}

\smallskip
\smallskip
\smallskip
To obtain the non-degeneracy result, we should improve the estimates of the $2k$-bubbling solution of \eqref{eq41} obtained in Theorem \ref{th2}.
Precisely, we have the following result.

\begin{Lem}\label{lem42}
There holds that
\begin{align*}
|u_k(y)|\leqslant C\sum_{j=1}^k\frac{1}{(1+|y-\overline{x}_{j}|)^{N-2s}}
+C\sum_{j=1}^k\frac{1}{(1+|y-\underline{x}_{j}|)^{N-2s}},\ \ \text{for}\ \text{all}\ y\in\R^N.
\end{align*}
\end{Lem}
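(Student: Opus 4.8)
The plan is to upgrade the weighted $\|\cdot\|_*$-bound on $u_k = W_{r,h,\Lambda}+\omega_k$ into the pointwise decay estimate claimed in Lemma~\ref{lem42}, by a bootstrap argument on the integral representation of $u_k$. Recall that by Theorem~\ref{th2} and Lemma~\ref{lem 25} we have $\|\omega_k\|_* \leqslant C\mu^{-(m/2+\epsilon)}\to 0$, and since $U_{\overline x_j,\Lambda}, U_{\underline x_j,\Lambda}$ decay like $(1+|y-\overline x_j|)^{-(N-2s)}$ (resp. with $\underline x_j$), the norm $\|u_k\|_*$ is bounded; i.e.
$$
|u_k(y)| \leqslant C\sum_{j=1}^k\Big(\frac{1}{(1+|y-\overline x_j|)^{\frac{N-2s}{2}+\tau}}+\frac{1}{(1+|y-\underline x_j|)^{\frac{N-2s}{2}+\tau}}\Big).
$$
The goal is to replace the exponent $\frac{N-2s}{2}+\tau$ by the optimal $N-2s$.

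First I would write $u_k$ via the Green representation for $(-\Delta)^s$ on $\R^N$,
$$
u_k(y) = c(N,s)\int_{\R^N}\frac{K(|z|/\mu)\,u_k(z)^{2^*_s-1}}{|z-y|^{N-2s}}\,dz,
$$
using $u_k>0$ and $(-\Delta)^s u_k = K(|y|/\mu)u_k^{2^*_s-1}$. Then I would feed in the current bound on $u_k$ to estimate the right-hand side. Raising the $\|\cdot\|_*$-type bound to the power $2^*_s-1 = \frac{N+2s}{N-2s}$ produces a sum of terms behaving like $(1+|z-\overline x_j|)^{-(\frac{N-2s}{2}+\tau)(2^*_s-1)}$ together with cross terms; by Lemma~\ref{lemA2} (or Lemma~\ref{lemA1}) the convolution with $|z-y|^{-(N-2s)}$ gains the correct decay and one obtains an improved exponent $\min\{N-2s,\ (\frac{N-2s}{2}+\tau)(2^*_s-1) - 2s\}$ at each center $\overline x_j,\underline x_j$, uniformly in $k$ (here one uses $\mu = k^{\frac{N-2s}{N-2s-m}}$ and $h = O(k^{-\frac{N-2s-1}{N-2s+1}})$ to control the interaction sums, exactly as in the proof of Lemma~\ref{lem 24}). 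Since $\tau > 0$, each iteration strictly increases the exponent by a fixed amount $(2^*_s-1-1)(\tfrac{N-2s}{2}+\tau) + \ldots$ — more precisely the map $\beta \mapsto \min\{N-2s, \beta(2^*_s-1)-2s\}$ has $N-2s$ as its only fixed point in the relevant range and is an expanding iteration below it — so after finitely many steps the exponent reaches $N-2s$, which is the decay rate of the bubbles themselves and cannot be improved further (the convolution $\int |z-y|^{-(N-2s)}(1+|z-\overline x_j|)^{-(N+2s)}dz \sim (1+|y-\overline x_j|)^{-(N-2s)}$ is stationary). This yields the asserted bound.

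The main obstacle is making the iteration step uniform in $k$: the nonlinearity is critical, so one must be careful that the constants coming from the convolution estimates and from summing the $k$ bubble contributions do not blow up as $k\to\infty$. This is handled exactly as in Lemmas~\ref{lem 24} and \ref{lem 21}: split $\R^N$ into the regions $\Omega_j^\pm$, use that in $\Omega_1^+$ one has $|y-\overline x_1|\leqslant |y-\overline x_j|, |y-\overline x_1|\leqslant|y-\underline x_j|$, and bound the tails by $\sum_{j\geqslant 2}|\overline x_j-\overline x_1|^{-\sigma}$ and $\sum_j|\underline x_j-\overline x_1|^{-\sigma}$, which are $O(1)$ (indeed $o(1)$) for suitable $\sigma>0$ thanks to the choice of $\mu$ and $h$ in \eqref{qe}. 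One also needs $2^*_s-1 > 1$, i.e. $N > 2s$, which holds throughout, to ensure the iteration genuinely improves the exponent; the constraint $s\in(0,\tfrac12)$ for $N=3$ guarantees $2^*_s-1\leqslant 2$ stays in the regime where the estimates of Lemma~\ref{lem 23} apply, but only the subcriticality-type inequalities on the weights (from $\tau$ in the stated range) are actually needed here. Once the exponent stabilizes at $N-2s$ the proof is complete.
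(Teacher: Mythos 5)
Your proposal is correct and follows essentially the same route as the paper: both start from the Green-function representation $u_k=c(N,s)\int|z-y|^{-(N-2s)}K(|z|/\mu)u_k^{2^*_s-1}dz$, insert the $\|\cdot\|_*$-type bound, use Lemma \ref{lemA1}/\ref{lemA2} to control the cross terms and the convolution, and iterate, gaining a fixed positive increment in the decay exponent at each step until it saturates at $N-2s$. The paper realizes the per-step gain as $\frac{4(\tau-\tau_1)}{N-2s}$ via a splitting parameter $\tau_1\in(\frac{N-2s-2}{N-2s},\tau)$ rather than your $\beta\mapsto\min\{N-2s,\beta(2^*_s-1)-2s\}$ bookkeeping, but this is only a cosmetic difference.
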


\begin{proof}
Since \eqref{eq41}, for some constant $\sigma_{N,s}$,  we have
\begin{align*}
u_k=\sigma_{N,s}\int_{\R^N}\frac{1}{|z-y|^{N-2s}}K(\frac {|z|}{\mu}) u^{2^*_s-1}_k(z)dz.
\end{align*}
We estimate by Lemma \ref{lemA3} that
\begin{align*}
|u_k|&\leqslant C\int_{\R^N}\frac{1}{|z-y|^{N-2s}} u^{2^*_s-1}_k(z)dz\\
&\leqslant C\int_{\R^N}\frac{1}{|y-z|^{N-2s}}\Big(\sum_{j=1}^k\frac{1}{(1+|z-\overline{x}_{j}|)^{\frac{N-2s}{2}+\tau}}
+\sum_{j=1}^k\frac{1}{(1+|z-\underline{x}_{j}|)^{\frac{N-2s}{2}+\tau}}\Big)^{2^*_s-1}dz\\
&\leqslant C\int_{\R^N}\frac{1}{|z-y|^{N-2s}}\sum_{j=1}^k\frac{1}{(1+ |z-\overline{x}_{j}|)^{\frac{N+2s}{2}+\tau_1+\frac{(N+2s)(\tau-\tau_1)}{N-2s}}}
\Big(\sum_{j=2}^k\frac{1}{(|\overline{x}_{1}-\overline{x}_{j}|)^{\tau_1}}\Big)^{2^*_s-2}dz\\
&\quad+C\int_{\R^N}\frac{1}{|z-y|^{N-2s}}\sum_{j=1}^k\frac{1}{(1+ |z-\overline{x}_{j}|)^{\frac{N+2s}{2}+\tau_1+\frac{(N+2s)(\tau-\tau_1)}{N-2s}}}
\Big(\sum_{j=1}^k\frac{1}{(|\overline{x}_{1}-\underline{x}_{j}|)^{\tau_1}}\Big)^{2^*_s-2}dz\\
&\leqslant C\sum_{j=1}^k\frac{1}{(1+|y-\overline{x}_{j}|)^{\frac{N-2s}{2}+\tau_1+\frac{(N+2s)(\tau-\tau_1)}{N-2s}}}
+C\sum_{j=1}^k\frac{1}{(1+|y-\underline{x}_{j}|)^{\frac{N-2s}{2}+\tau_1+\frac{(N+2s)(\tau-\tau_1)}{N-2s}}},
\end{align*}
where $\frac{N-2s-2}{N-2s}<\tau_1<\tau$.
Since it holds that
\begin{align*}
\frac{N-2s}2+\tau_1+\frac{(N+2s)(\tau-\tau_1)}{N-2s}=\frac{N-2s}2+\tau+\frac{4(\tau-\tau_1)}{N-2s}>\frac{N-2s}2+\tau,
\end{align*}
we  continue this process to obtain the result.
\end{proof}
\smallskip

Recall the linear operator
\begin{align*}
L_k\xi=(-\Delta)^s\xi-(2^*_s-1)K\Big(\frac{|y|}{\mu}\Big)u_k^{2^*_s-2}\xi.
\end{align*}

We prove Theorem \ref{th3} by contradiction. Suppose that there exists some $k\rightarrow+\infty$ such that $\|\xi_n\|_*=1$
and $L_{k}\xi_n=0.$ Let
\begin{align*}
\bar\xi_n(y)=\xi_n(y+\overline{x}_{1}).
\end{align*}

\begin{Lem}\label{lem43}
There exist some constants $b_0$, $b_1$ and $b_3$ such that
\begin{align*}
\bar\xi_n\rightarrow b_0\psi_0+b_1\psi_1+b_3\psi_3,
\end{align*}
uniformly in $C^1(B_R(0))$ for any $R>0$,
$$\psi_0=\frac{\partial U_{0,\Lambda}}{\partial \Lambda}\Big\arrowvert_{\Lambda=1},\ \ \psi_i=\frac{\partial U_{0,1}}{\partial y_i},i=1,3.$$

\end{Lem}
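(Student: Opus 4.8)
The plan is to run the classical blow-up analysis for the linearized operator around the first bubble. First I would record the uniform bound furnished by the normalization $\|\xi_n\|_*=1$: by definition of the $*$-norm,
$$|\xi_n(y)|\leqslant C\Big(\sum_{j=1}^k\frac{1}{(1+|y-\overline{x}_{j}|)^{\frac{N-2s}{2}+\tau}}+\sum_{j=1}^k\frac{1}{(1+|y-\underline{x}_{j}|)^{\frac{N-2s}{2}+\tau}}\Big),$$
and since for large $k$ the points $\overline{x}_j,\underline{x}_j$ are mutually separated by a distance tending to infinity, the right-hand side is bounded uniformly in $k$; hence $\|\xi_n\|_{L^\infty}\leqslant C$ and, after translating, $\|\bar\xi_n\|_{L^\infty(B_R(0))}\leqslant C$ for every fixed $R$. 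Rewriting $L_k\xi_n=0$ gives
$$(-\Delta)^s\bar\xi_n=(2^*_s-1)K\Big(\tfrac{|y+\overline{x}_1|}{\mu}\Big)u_k(y+\overline{x}_1)^{2^*_s-2}\bar\xi_n=:g_n,$$
and by Lemma \ref{lem42} the function $u_k(\cdot+\overline{x}_1)$ is locally bounded, so $\|g_n\|_{L^\infty(B_R(0))}\leqslant C_R$. Interior regularity for $(-\Delta)^s$ — applied to the Caffarelli–Silvestre extension $\widetilde{\bar\xi_n}$, which satisfies $\mathrm{div}(t^{1-2s}\nabla\widetilde{\bar\xi_n})=0$ with Neumann datum $g_n$, or equivalently through the Riesz potential representation of $\bar\xi_n$ — then upgrades this to a uniform bound $\|\bar\xi_n\|_{C^{1,\alpha}(B_R(0))}\leqslant C_R$ for some $\alpha\in(0,1)$.

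Next I would extract the limit. By Arzelà–Ascoli and a diagonal argument, along a subsequence $\bar\xi_n\to\xi_\infty$ in $C^1_{loc}(\R^N)$, and the pointwise bound passes to the limit, giving $|\xi_\infty(y)|\leqslant C(1+|y|)^{-\frac{N-2s}{2}-\tau}$. To pass to the limit in the equation I would observe that, on each ball $B_R(0)$, $K(|\cdot+\overline{x}_1|/\mu)\to K(r_0)=1$, and $u_k(\cdot+\overline{x}_1)\to U_{0,\Lambda_*}$ uniformly, where $\Lambda_*:=\lim_k\Lambda_k$ exists and is positive because $(r_k,h_k,\Lambda_k)\in\D$: indeed $u_k=W_{r_k,h_k,\Lambda_k}+\omega_k$ with $\|\omega_k\|_{L^\infty}\to0$ by Theorem \ref{th2}, the bubble $U_{\overline{x}_1,\Lambda_k}$ translates to $U_{0,\Lambda_k}\to U_{0,\Lambda_*}$, and all the remaining $2k-1$ bubbles are centered at points whose distance to $\overline{x}_1$ tends to infinity, hence contribute a quantity tending to zero on $\overline{x}_1+B_R(0)$. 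Consequently $\xi_\infty$ is a bounded weak solution of
$$(-\Delta)^s\xi_\infty=(2^*_s-1)U_{0,\Lambda_*}^{2^*_s-2}\xi_\infty\quad\text{in }\R^N.$$

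Finally I would invoke the non-degeneracy of the Aubin–Talenti bubble recalled in Section 1: every bounded solution of this linearized equation is a linear combination of $\partial U_{0,\Lambda}/\partial\Lambda|_{\Lambda=\Lambda_*}$ and $\partial U_{0,\Lambda_*}/\partial y_i$, $i=1,\dots,N$. It then remains to annihilate the directions $i=2,4,5,\dots,N$. Since $\xi_n\in H_s$ is even in $y_l$ for $l=2,4,5,\dots,N$ and $\overline{x}_1=r_k(\sqrt{1-h_k^2},0,h_k,\mathbf 0)$ has vanishing $l$-th coordinate for each such $l$, the translated functions $\bar\xi_n$, and therefore $\xi_\infty$, are even in $y_l$ for $l=2,4,5,\dots,N$; but $\partial U_{0,\Lambda_*}/\partial y_l$ is odd in $y_l$, so its coefficient must vanish. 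Hence $\xi_\infty=b_0\psi_0+b_1\psi_1+b_3\psi_3$ (with $\psi_0,\psi_1,\psi_3$ understood at the limiting scale $\Lambda_*$, equal to $1$ after the normalization used in the statement). I expect the main obstacle to be the uniform local $C^{1,\alpha}$ estimate for the nonlocal operator together with the verification that $u_k(\cdot+\overline{x}_1)^{2^*_s-2}\to U_{0,\Lambda_*}^{2^*_s-2}$ uniformly on compact sets — that is, controlling simultaneously the error $\omega_k$ and the algebraic tails of the other $2k-1$ bubbles; everything else is the standard Bahri–Coron type argument.
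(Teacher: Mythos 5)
Your proposal is correct and follows essentially the same route as the paper's (much terser) proof: uniform $L^\infty$ bound from $\|\xi_n\|_*=1$, local compactness and passage to the limit in the linearized equation, the classification of bounded kernel elements of the linearized bubble operator, and the evenness in $y_l$, $l=2,4,\dots,N$ (preserved by the translation since $\overline{x}_1$ has zero $l$-th coordinates) to kill the remaining directions. The extra details you supply — the local $C^{1,\alpha}$ estimate via the extension/Riesz representation and the identification of the limiting coefficient $U_{0,\Lambda_*}^{2^*_s-2}$ through the decay of the other $2k-1$ bubbles and of $\omega_k$ — are exactly the steps the paper leaves implicit.
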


\begin{proof}
Since by assumption,  $|\bar\xi|\leqslant C$, we assume that $\bar\xi\rightarrow\xi$ in $C_{loc}(\R^N)$. It is easy to find that $\xi$ satisfies
\begin{align}\label{44}
(-\Delta)^s\xi=(2^*_s-1)U^{2^*_s-2}\xi,\ \text{ in}\ \R^N,
\end{align}
which gives that
$$\xi=\sum_{i=0}^N b_i\psi_i.$$
Since $\xi_n$ is even in $y_i,i=2,4,\ldots,N$, we obtain that $b_i=0,i=2,4,\ldots,N$.

\end{proof}
\smallskip

Similar to Lemma \ref{lem42}, we show the following result.
\begin{Lem}\label{lem44}
There holds that
\begin{align*}
|\xi_k(y)|\leqslant C\sum_{j=1}^k\frac{1}{(1+|y-\overline{x}_{j}|)^{N-2s}}
+C\sum_{j=1}^k\frac{1}{(1+|y-\underline{x}_{j}|)^{N-2s}},\ \ \text{for}\ \text{all}\ y\in\R^N.
\end{align*}
\end{Lem}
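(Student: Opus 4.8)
The plan is to establish the pointwise decay estimate for $\xi_k$ by exactly imitating the bootstrap argument used for $u_k$ in Lemma \ref{lem42}, taking advantage of the fact that $\xi_k$ solves a linear equation whose coefficient $(2^*_s-1)K(|y|/\mu)u_k^{2^*_s-2}$ is already controlled by Lemma \ref{lem42}. First I would write $\xi_k$ via the Riesz potential representation: since $(-\Delta)^s\xi_k=(2^*_s-1)K(|y|/\mu)u_k^{2^*_s-2}\xi_k$, there is a constant $\sigma_{N,s}$ with
\begin{align*}
\xi_k(y)=\sigma_{N,s}\int_{\R^N}\frac{1}{|z-y|^{N-2s}}(2^*_s-1)K\Big(\frac{|z|}{\mu}\Big)u_k^{2^*_s-2}(z)\,\xi_k(z)\,dz.
\end{align*}
Using $\|\xi_k\|_*=1$ together with the bound from Lemma \ref{lem42}, the kernel coefficient $u_k^{2^*_s-2}(z)$ is dominated by $\big(\sum_j (1+|z-\overline{x}_j|)^{-(N-2s)}+\sum_j(1+|z-\underline{x}_j|)^{-(N-2s)}\big)^{2^*_s-2}$, so the integrand is bounded by the same type of expression that appears in the proof of Lemma \ref{lem42} with $u_k^{2^*_s-1}$ replaced by $u_k^{2^*_s-2}\xi_k$; since $2^*_s-2+1=2^*_s-1$, the total power matches.

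Next I would invoke Lemma \ref{lemA3} (and Lemma \ref{lemA1} for the cross-term manipulations) precisely as in Lemma \ref{lem42}: bounding $|\xi_k(z)|\le \sum_j(1+|z-\overline{x}_j|)^{-(\frac{N-2s}{2}+\tau)}+\sum_j(1+|z-\underline{x}_j|)^{-(\frac{N-2s}{2}+\tau)}$ from the normalization $\|\xi_k\|_*=1$, multiplying by $u_k^{2^*_s-2}$, convolving against $|z-y|^{-(N-2s)}$, and extracting a decay exponent $\frac{N-2s}{2}+\tau_1+\frac{(N+2s)(\tau-\tau_1)}{N-2s}$ for some $\frac{N-2s-2}{N-2s}<\tau_1<\tau$. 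Because
\begin{align*}
\frac{N-2s}{2}+\tau_1+\frac{(N+2s)(\tau-\tau_1)}{N-2s}=\frac{N-2s}{2}+\tau+\frac{4(\tau-\tau_1)}{N-2s}>\frac{N-2s}{2}+\tau,
\end{align*}
the estimate strictly improves each iteration, and the iteration can be repeated finitely many times (choosing at each stage a new exponent closer to $\tau$ from below, or rather pushing the effective exponent up) until the power reaches $N-2s$, which yields the asserted bound
\begin{align*}
|\xi_k(y)|\le C\sum_{j=1}^k\frac{1}{(1+|y-\overline{x}_j|)^{N-2s}}+C\sum_{j=1}^k\frac{1}{(1+|y-\underline{x}_j|)^{N-2s}}.
\end{align*}

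The only substantive point to check, and the part I expect to require the most care, is that the bootstrap really terminates at exponent $N-2s$ rather than overshooting or stalling: one must track how the gain $\frac{4(\tau-\tau_1)}{N-2s}$ accumulates and verify that the convolution lemmas remain applicable at each stage (i.e.\ the relevant exponents stay in the admissible ranges $\frac{N-2s-2}{N-2s}<\tau_1<\tau$ and the sums $\sum_{j\ge 2}|\overline{x}_1-\overline{x}_j|^{-\tau_1}$, $\sum_j|\overline{x}_1-\underline{x}_j|^{-\tau_1}$ stay uniformly bounded in $k$, which uses the separation of the points $\overline{x}_j,\underline{x}_j$ exactly as in Lemma \ref{lem42}). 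Since this is word-for-word the mechanism already carried out for $u_k$, with $u_k^{2^*_s-1}$ replaced by $u_k^{2^*_s-2}\xi_k$ and with the a priori input being $\|\xi_k\|_*=1$ plus Lemma \ref{lem42} for $u_k$, I would simply remark that the argument is identical and omit the repeated computation, exactly as the paper does. This completes the proof of Lemma \ref{lem44}.
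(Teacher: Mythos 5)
Your proposal is correct and follows exactly the route the paper intends: the paper gives no separate proof of Lemma \ref{lem44}, stating only that it is ``similar to Lemma \ref{lem42}'', and your argument is precisely that adaptation — Riesz potential representation for the linearized equation, domination of the coefficient $u_k^{2^*_s-2}$ via Lemma \ref{lem42}, the normalization $\|\xi_k\|_*=1$ as the starting decay, and the same iteration through Lemmas \ref{lemA1}--\ref{lemA3} terminating at exponent $N-2s$. No substantive gap.
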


From Lemma \ref{lem43}, we know that $b_{0}$, $b_{1}$ and $b_{3}$ are both bounded.
Now we are ready to prove the following result.
\begin{Lem}\label{lem45}
It holds $b_0=b_1=b_3=0$.
\end{Lem}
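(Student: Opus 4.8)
The plan is to show $b_0=b_1=b_3=0$ by plugging the limit profile from Lemma~\ref{lem43} into three carefully chosen local Pohozaev identities (the ones recorded in Lemma~\ref{lem41}) centred at the bubble point $\overline{x}_1$, and matching the leading-order terms on both sides. The intuition is that each of the three coefficients is ``detected'' by a different identity: $b_1$ (the translation mode $\partial_{y_1}U$, i.e. the mode tangent to the circle/radial direction) by the identity \eqref{42} with $i$ chosen in the $y_1$-direction; $b_3$ (the $\partial_{y_3}U$ mode, tangent to the $h$-direction of the cylinder) by \eqref{42} with $i$ in the $y_3$-direction; and $b_0$ (the dilation mode $\partial_\Lambda U$) by the dilation identity \eqref{43} with $x_0=\overline{x}_1$.

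The key steps, in order, are as follows. First, I would fix a domain $\Omega=B_d(\overline{x}_1)$ with $d$ a small fixed fraction of the inter-bubble distance $\sim \mu\sqrt{1-h^2}/k$ (so that $\Omega$ contains only the single bubble $U_{\overline{x}_1,\Lambda}$ and no other $\overline{x}_j$ or $\underline{x}_j$), and write $u_k=W_{r,h,\Lambda}+\omega_k$, $\xi_k$, together with their harmonic extensions $\tilde u_k,\tilde\xi_k$. Second, I would estimate the boundary integrals on $\partial''\Omega^+$ and $\partial\Omega$ in \eqref{42} and \eqref{43}: on this fixed-size sphere, away from all concentration points, the bubbles and their derivatives decay like $(1+\mathrm{dist})^{-(N-2s)}$ (Lemma~\ref{lem42}, Lemma~\ref{lem44}, plus the algebraic decay of the Poisson extension), so after rescaling $y=\overline{x}_1+z$ these boundary terms are of strictly smaller order than the interior ``$\nabla K$'' term, which carries the information about $K$ near its maximum. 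Third, I would compute the interior term: using \eqref{eq13}, $\nabla K(|y|/\mu)$ near $\overline{x}_1$ behaves like $-\frac{c_0 m}{\mu^m}|r-r_0\mu|^{m-1}\,\widehat{e}_r+\dots$ in the radial direction, times $u_k^{2^*_s-1}\xi_k\approx U_{\overline{x}_1,\Lambda}^{2^*_s-1}(b_0\psi_0+b_1\psi_1+b_3\psi_3)$ near the bubble; carrying out the rescaled integral against $U^{2^*_s-1}\psi_\ell$ over $\R^N$ picks out exactly one of $b_0,b_1,b_3$ because $\int_{\R^N}U^{2^*_s-1}\psi_\ell$-type integrals vanish or not depending on parity and on the orthogonality relations satisfied by $\psi_0,\psi_1,\psi_3$. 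Matching orders in $\mu$ (using the definitions of $\mu,h_0,\Lambda_0$ and the configuration space $\D$) then forces each coefficient to vanish.

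The main obstacle I anticipate is the bookkeeping of error terms induced by the harmonic extension: unlike the classical Laplacian case, the identities \eqref{42}–\eqref{43} involve weighted integrals $\int_{\partial''\Omega^+}t^{1-2s}(\dots)$ over the lateral boundary in the half-space, and one must show these are genuinely lower order. This requires pointwise decay estimates on $\tilde u_k,\tilde\xi_k$ and $\nabla\tilde u_k,\nabla\tilde\xi_k$ in $\overline{\R^{N+1}_+}$, obtained from the Poisson kernel representation $\tilde u_k=\mathcal P_s[u_k]$ together with Lemma~\ref{lem42}; the slow (algebraic, not exponential) decay means the cancellation between the three lateral-boundary integrals in \eqref{42} — namely $-\partial_\nu\tilde u\,\partial_{y_i}\tilde\xi-\partial_\nu\tilde\xi\,\partial_{y_i}\tilde u+\langle\nabla\tilde u,\nabla\tilde\xi\rangle\nu_i$ — has to be exploited rather than bounding term by term. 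A secondary difficulty, specific to this paper, is that the extra parameter $h$ makes the competition of scales finer: one must check that the leading coefficient in front of, say, $b_3$ is nonzero precisely because $h_0\ne 0$, so the argument genuinely uses the oblate-cylinder geometry. Once the three leading balances are set up correctly, concluding $b_0=b_1=b_3=0$ is then a routine matter of non-vanishing of the relevant one-dimensional integrals of Talenti bubbles.
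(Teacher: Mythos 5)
Your treatment of $b_1$ and $b_3$ coincides with the paper's: apply the translation identity \eqref{42} on a ball about $\overline{x}_1$ in the $y_1$- and $y_3$-directions, control the lateral boundary terms via the pointwise decay of $\nabla\tilde U_{x,\Lambda}$ (Lemma \ref{lemB2}), and read off $b_1,b_3$ from the interior term, which the paper expands as $\frac{K''(|\overline{x}_1|/\mu)b_i}{\mu\Lambda}\int_{\R^N} U^{2^*_s-1}\psi_i y_i$. Two small corrections to your intuition. First, the nonvanishing of the coefficient of $b_3$ has nothing to do with $h_0\neq 0$: it is the same Hessian-of-$K$ integral as for $b_1$; the oblate geometry enters only in the boundary estimate $E_3$, where $(\underline{x}_j-\overline{x}_1)_3=-2rh$ produces the term $O\big(k/(r^{N-2s+1}h^{N-2s})\big)$ that must be checked to be of lower order. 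Second, the paper does not exploit cancellation among the three lateral integrals in \eqref{42}; it uses that the self-interaction of $U_{\overline{x}_1,\Lambda}$ with the limit profile of $\xi_k$ drops out (the free Pohozaev identity for a single bubble), so only the cross-terms $\sum_j(\overline{x}_j-\overline{x}_1)_i/|\overline{x}_j-\overline{x}_1|^{N-2s+2}$ survive, and these are bounded term by term.

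The genuine divergence, and the place where your plan has a gap, is the $b_0$ step. You propose the dilation identity \eqref{43} on the small ball with $X_0=\overline{x}_1$. There the interior term is $\approx\frac{b_0\,\Delta K(|\overline{x}_1|/\mu)}{N\mu^2}\int_{\R^N}U^{2^*_s-1}\psi_0|y|^2$, of size roughly $b_0\mu^{-m}$, while the cross-interaction contributions to the lateral boundary terms of \eqref{43} (the bubble at $\overline{x}_1$ against the tails of the other $2k-1$ bubbles) are $O\big(b_0\sum_j|\overline{x}_j-\overline{x}_1|^{-(N-2s)}\big)=O(b_0\mu^{-m})$ --- the \emph{same} order. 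Matching orders therefore does not force $b_0=0$ unless you compute and compare the actual constants on both sides. The paper avoids this entirely: it applies \eqref{43} with $\Omega=\R^N$, where Lemma \ref{lemA11} guarantees the boundary contributions vanish and the identity collapses to $\int_{\R^N}u_k^{2^*_s-1}\xi_k\langle\nabla K,y\rangle=0$; by the $\mathbb Z_k\times\mathbb Z_2$ symmetry this localizes to $\Omega_1^+$, and the decomposition $\langle\nabla K,y\rangle=\langle\nabla K,y-\overline{x}_1\rangle+r\,\partial_{y_1}K$ lets one evaluate $\int_{B_\delta(\overline{x}_1)}u_k^{2^*_s-1}\xi_k\,\partial_{y_1}K$ a second time through the Step 1 identity (now with $b_1=0$), so that the Hessian term in $b_0$ is equated with quantities of strictly smaller order. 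To repair your version you would either have to track the exact constants in the local boundary cross-terms, or switch to the paper's global-identity-plus-symmetry route.
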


\begin{proof}
$\textbf{Step 1.}$  we apply the Pohozaev identities \eqref{42} in the domain $\Omega=B_\delta(\overline{x}_{1})$ and obtain that
\begin{align}\label{45}
&\quad-\int_{\partial''B_\delta^+(\overline{x}_{1})}t^{1-2s}\frac{\partial \tilde{u}_{k}}{\partial\nu}\frac{\partial\tilde{\xi}_{k}}{\partial y_1}
-\int_{\partial''B_\delta^+(\overline{x}_{1})}t^{1-2s}\frac{\partial \tilde{\xi}_{k}}{\partial\nu}\frac{\partial \tilde{u}_{k}}{\partial y_1}
+ \int_{\partial''B_\delta^+(\overline{x}_{1})}t^{1-2s}\langle\nabla \tilde u_{k},\nabla\tilde \xi_{k}\rangle\nu_1\nonumber\\
&=-\int_{B_\delta(\overline{x}_{1})}\frac{\partial K\Big(\frac{|y|}{\mu}\Big)}{\partial y_1}u_{k}^{2^*_s-1}\xi_{k}+\int_{\partial B_\delta(\overline{x}_{1})} K\Big(\frac{|y|}{\mu}\Big)u_{k}^{2^*_s-1}\xi_{k}\nu_1.
\end{align}
It is easy to observe that
\begin{align*}
\int_{\partial B_\delta(\overline{x}_{1})} K\Big(\frac{|y|}{\mu}\Big)u_{k}^{2^*_s-1}\xi_{k}\nu_1=O(\frac{1}{\mu^{m+\sigma}}).
\end{align*}
By Lemma \ref{lemB2}, we  get
\begin{align*}
E_1&:=-\int_{\partial''B_\delta^+(\overline{x}_{1})}t^{1-2s}\frac{\partial \tilde{u}_{k}}{\partial\nu}\frac{\partial\tilde{\xi}_{k}}{\partial y_1}
-\int_{\partial''B_\delta^+(\overline{x}_{1})}t^{1-2s}\frac{\partial \tilde{\xi}_{k}}{\partial\nu}\frac{\partial \tilde{u}_{k}}{\partial y_1}
+ \int_{\partial''B_\delta^+(\overline{x}_{1})}t^{1-2s}\langle\nabla \tilde u_{k},\nabla\tilde \xi_{k}\rangle\nu_1\\
&=-\int_{\partial''B_\delta^+(\overline{x}_{1})}t^{1-2s}\frac{\partial \displaystyle(\sum_{i=1}^k \tilde U_{\overline{x}_{i},\Lambda}+\sum_{i=1}^k \tilde U_{\underline{x}_{i,\Lambda}})}{\partial\nu}\frac{\partial\tilde{\xi}_{k}}{\partial y_1}
-\int_{\partial''B_\delta^+(\overline{x}_{1})}t^{1-2s}\frac{\partial \tilde{\xi}_{k}}{\partial\nu}\frac{\partial \displaystyle(\sum_{i=1}^k \tilde U_{\overline{x}_{i},\Lambda}+\sum_{i=1}^k \tilde U_{\underline{x}_{i,\Lambda}})}{\partial y_1}\\
&\quad+\int_{\partial''B_\delta^+(\overline{x}_{1})}t^{1-2s}\langle\nabla \displaystyle(\sum_{i=1}^k \tilde U_{\overline{x}_{i},\Lambda}+\sum_{i=1}^k \tilde U_{\underline{x}_{i,\Lambda}}),\nabla\tilde \xi_{k}\rangle\nu_1\\
&\leqslant Cb_0\int_{\partial''B_\theta^+(\overline{x}_{1})}t^{1-2s}\frac{1}{|y-\overline{x}_{1}|^{N-2s+1}}
\sum_{j=2}^{k}\frac{(\overline{x}_{j}-\overline{x}_{1})_1}{|\overline{x}_{j}-\overline{x}_{1}|^{N-2s+2}}\nonumber\\
&\quad+Cb_0\int_{\partial''B_\theta^+(\overline{x}_{1})}t^{1-2s}\frac{1}{|y-\overline{x}_{1}|^{N-2s+1}}
\sum_{j=1}^{k}\frac{(\underline{x}_{j}-\overline{x}_{1})_1}{|\underline{x}_{j}-\overline{x}_{1}|^{N-2s+2}}+O(\frac{1}{\mu^{m+\sigma}})\nonumber\\
&\leqslant Cb_0\sum_{j=2}^{k}\frac{(\overline{x}_{j}-\overline{x}_{1})_1}{|\overline{x}_{j}-\overline{x}_{1}|^{N-2s+2}}
+Cb_0\sum_{j=1}^{k}\frac{(\underline{x}_{j}-\overline{x}_{1})_1}{|\underline{x}_{j}-\overline{x}_{1}|^{N-2s+2}}+O(\frac{1}{\mu^{m+\sigma}}).
\end{align*}
Since
$$|\overline{x}_{j}-\overline{x}_{1}|=2r\sqrt{1-h^2}\sin\frac{j\pi}{k}$$
and
$$|\overline{x}_{1}-\underline{x}_{j}|=2r\sqrt{(1-h^2)\sin^2\frac{(j-1)\pi}{k}+h^2}, j=2,3,\cdots,k,$$
we have
$$(\overline{x}_{j}-\overline{x}_{1})_1=(\underline{x}_{j}-\overline{x}_{1})_1=r\sqrt{1-h^2}\cos\frac{2(j-1)\pi}{k}-r\sqrt{1-h^2}=
-\frac{|\overline{x}_{j}-\overline{x}_{1}|^2}{2r\sqrt{1-h^2}}$$
and
$$(\underline{x}_{1}-\overline{x}_{1})_1=0.$$
Thus
\begin{align}\label{46}
E_1
&\leqslant\frac{Cb_0}{r\sqrt{1-h^2}}\Big(\sum_{j=2}^{k}
\frac{1}{|\overline{x}_{j}-\overline{x}_{1}|^{N-2s}}+\sum_{j=2}^{k}
\frac{|\overline{x}_{j}-\overline{x}_{1}|^2}{|\underline{x}_{j}-\overline{x}_{1}|^2}
\frac{1}{|\underline{x}_{j}-\overline{x}_{1}|^{N-2s}}\Big)+O(\frac{1}{\mu^{m+\sigma}})\nonumber\\
&=b_0O\Big(\frac{k^{N-2s}}{(r\sqrt{1-h^2})^{N-2s}}\Big)=b_0O\Big(\frac{1}{\mu^m(\sqrt{1-h^2})^{N-2s}}\Big).
\end{align}

We estimate the right hand side of \eqref{45},
\begin{align}\label{47}
&\quad\int_{B_\delta(\overline{x}_{1})}\frac{\partial K\Big(\frac{|y|}{\mu}\Big)}{\partial y_1}u_{k}^{2^*_s-1}\xi_{k}\nonumber\\
&=\int_{B_\delta(\overline{x}_{1})}\Big(\frac{\partial K\Big(\frac{|y|}{\mu}\Big)}{\partial y_1}-\frac{\partial K\Big(\frac{|\overline{x}_{1}|}{\mu}\Big)}{\partial y_1}\Big)u_{k}^{2^*_s-1}\xi_{k}+O(\frac{1}{\mu^{m+\sigma}})\nonumber\\
&=\int_{B_\delta(\overline{x}_{1})}\Big\langle\nabla\frac{\partial K\Big(\frac{|\overline{x}_{1}|}{\mu}\Big)}{\partial y_1},y-\overline{x}_{1}\Big\rangle
u_{k}^{2^*_s-1}\xi_{k}+O(\frac{1}{\mu^{m+\sigma}})\nonumber\\
&=\int_{\R^N}\Big\langle\nabla\frac{\partial K\Big(\frac{|\overline{x}_{1}|}{\mu}\Big)}{\partial y_1},\frac{y}{\mu\Lambda}\Big\rangle
U^{2^*_s-1}(b_0\psi_0+b_1\psi_1+b_3\psi_3)+O(\frac{1}{\mu^{m+\sigma}})\nonumber\\
&=\frac{K''\Big(\frac{|\overline{x}_{1}|}{\mu}\Big)b_1}{\mu\Lambda}\int_{\R^N}
U^{2^*_s-1}\psi_1y_1+O(\frac{1}{\mu^{m+\sigma}}).
\end{align}

Combining \eqref{46} and \eqref{47}, we  obtain
$$\frac{K''\Big(\frac{|\overline{x}_{1}|}{\mu}\Big)b_1}{\mu\Lambda}\int_{\R^N}
U^{2^*_s-1}\psi_1y_1+O(\frac{1}{\mu^{m+\sigma}})\leqslant b_0O\Big(\frac{1}{\mu^m(\sqrt{1-h^2})^{N-2s}}\Big),$$
which gives $b_1=0$.\\

$\textbf{Step 2.}$ We apply the Pohozaev identities in $y_3$ to get
\begin{align*}
&\quad-\int_{\partial''B_\delta^+(\overline{x}_{1})}t^{1-2s}\frac{\partial \tilde u_{k}}{\partial\nu}\frac{\partial\tilde\xi_{k}}{\partial y_3}
-\int_{\partial''B_\delta^+(\overline{x}_{1})}t^{1-2s}\frac{\partial \tilde\xi_{k}}{\partial\nu}\frac{\partial \tilde  u_{k}}{\partial y_3}
+ \int_{\partial''B_\delta^+(\overline{x}_{1})}t^{1-2s}\langle\nabla \tilde  u_{k},\nabla\tilde \xi_{k}\rangle\nu_3\nonumber\\
&=-\int_{B_\delta(\overline{x}_{1})}\frac{\partial K\Big(\frac{|y|}{\mu}\Big)}{\partial y_3}u_{k}^{2^*_s-1}\xi_{k}+\int_{\partial B_\delta(\overline{x}_{1})} K\Big(\frac{|y|}{\mu}\Big)u_{k}^{2^*_s-1}\xi_{k}\nu_3.
\end{align*}
Again by  Lemma \ref{lem44}
\begin{align*}
\int_{\partial B_\delta(\overline{x}_{1})} K\Big(\frac{|y|}{\mu}\Big)u_{k}^{2^*_s-1}\xi_{k}\nu_3=O(\frac{1}{\mu^{m+\sigma}}).
\end{align*}
Let
\begin{align*} E_3&:=-\int_{\partial''B_\delta^+(\overline{x}_{1})}t^{1-2s}\frac{\partial \tilde u_{k}}{\partial\nu}\frac{\partial\tilde\xi_{k}}{\partial y_3}
-\int_{\partial''B_\delta^+(\overline{x}_{1})}t^{1-2s}\frac{\partial \tilde\xi_{k}}{\partial\nu}\frac{\partial \tilde  u_{k}}{\partial y_3}
+ \int_{\partial''B_\delta^+(\overline{x}_{1})}t^{1-2s}\langle\nabla \tilde  u_{k},\nabla\tilde \xi_{k}\rangle\nu_3\nonumber\\
&=-\int_{\partial''B_\delta^+(\overline{x}_{1})}t^{1-2s}\frac{\partial \displaystyle(\sum_{i=1}^k \tilde U_{\overline{x}_{i},\Lambda}+\sum_{i=1}^k \tilde U_{\underline{x}_{i,\Lambda}})}{\partial\nu}\frac{\partial\tilde{\xi}_{k}}{\partial y_3}
-\int_{\partial''B_\delta^+(\overline{x}_{1})}t^{1-2s}\frac{\partial \tilde{\xi}_{k}}{\partial\nu}\frac{\partial \displaystyle(\sum_{i=1}^k \tilde U_{\overline{x}_{i},\Lambda}+\sum_{i=1}^k \tilde U_{\underline{x}_{i,\Lambda}})}{\partial y_3}\\
&\quad+\int_{\partial''B_\delta^+(\overline{x}_{1})}t^{1-2s}\langle\nabla \displaystyle(\sum_{i=1}^k \tilde U_{\overline{x}_{i},\Lambda}+\sum_{i=1}^k \tilde U_{\underline{x}_{i,\Lambda}}),\nabla\tilde \xi_{k}\rangle\nu_3\\
&\leqslant Cb_0\int_{\partial''B_\theta^+(\overline{x}_{1})}t^{1-2s}\frac{1}{|y-\overline{x}_{1}|^{N-2s+1}}
\sum_{j=1}^{k}\frac{(\underline{x}_{j}-\overline{x}_{1})_3}{|\underline{x}_{j}-\overline{x}_{1}|^{N-2s+2}}\nonumber\\
&\quad+Cb_0\int_{\partial''B_\theta^+(\overline{x}_{1})}t^{1-2s}\frac{1}{|y-\overline{x}_{1}|^{N-2s+1}}
\sum_{j=2}^{k}\frac{(\overline{x}_{j}-\overline{x}_{1})_3}{|\overline{x}_{j}-\overline{x}_{1}|^{N-2s+2}}+O(\frac{1}{\mu^{m+\sigma}})\nonumber\\
&\leqslant Cb_0\sum_{j=1}^{k}\frac{(\overline{x}_{1}-\underline{x}_{j})_3}{|\overline{x}_{1}-\underline{x}_{j}|^{N-2s+2}}
+Cb_0\sum_{j=2}^{k}\frac{(\overline{x}_{j}-\overline{x}_{1})_3}{|\overline{x}_{j}-\overline{x}_{1}|^{N-2s+2}}+O(\frac{1}{\mu^{m+\sigma}}).
\end{align*}
Since $(\underline{x}_{1}-\overline{x}_{1})_3=|\underline{x}_{1}-\overline{x}_{1}|=-2rh$ and $(\overline{x}_{j}-\overline{x}_{1})_3=0$,
then
\begin{align}\label{48}
E_3
&\leqslant\sum_{j=1}^{k}\frac{2Cb_0rh}{|\underline{x}_{j}-\overline{x}_{1}|^{N-2s+2}}
+O(\frac{1}{\mu^{m+\sigma}})\nonumber\\
&=b_0O\Big(\frac{k}{r^{N-2s+1}h^{N-2s}\sqrt{1-h^2}}\Big)=b_0O\Big(\frac{1}{\sqrt{1-h^2}\mu^{\frac{4(N-2s)^2+2(N-2s+1)-2(N-2s)}{(N-2s)(N-2s+1)}}}\Big).
\end{align}
Similar to \eqref{47}, we have
\begin{align}\label{49}
\int_{B_\delta(\overline{x}_{1})}\frac{\partial K\Big(\frac{|y|}{\mu}\Big)}{\partial y_3}u_{k}^{2^*_s-1}\xi_{k}=\frac{K''\Big(\frac{|\overline{x}_{1}|}{\mu}\Big)b_3}{\mu\Lambda}\int_{\R^N}
U^{2^*_s-1}\psi_3y_3+O(\frac{1}{\mu^{m+\sigma}}).
\end{align}

Combining \eqref{48} and \eqref{49}, we  obtain
$$\frac{K''\Big(\frac{|\overline{x}_{1}|}{\mu}\Big)b_3}{\mu\Lambda}\int_{\R^N}
U^{2^*_s-1}\psi_3y_3+O(\frac{1}{\mu^{m+\sigma}})\leqslant O\Big(\frac{1}{\mu^{\frac{(N-2s+1)^2-(N-2s+1)(N-2s-m)}{(N-2s+1)}}\sqrt{1-h^2}}\Big).$$
Since $$\frac{(N-2s+1)^2-(N-2s+1)(N-2s-m)}{(N-2s+1)}>1,$$
thus $b_3=0$.

$\textbf{Step 3.}$ We use \eqref{43} for $\Omega=\R^N$, from Lemma \ref{lemA11}, we obtain
$$\int_{\R^N}u_{k}^{2^*_s-1}\xi_{k}\langle\nabla K\Big(\frac{|y|}{\mu}\Big),y\rangle=0,$$
which gives
\begin{align*}
\int_{\Omega_1^+}u_{k}^{2^*_s-1}\xi_{k}\langle\nabla K\Big(\frac{|y|}{\mu}\Big),y\rangle=0.
\end{align*}
On the other hand, we have
\begin{align*}
&\quad\int_{\Omega_1^+}u_{k}^{2^*_s-1}\xi_{k}\langle\nabla K\Big(\frac{|y|}{\mu}\Big),y\rangle=\int_{B_\delta(\overline{x}_{1})}u_{k}^{2^*_s-1}\xi_{k}\langle\nabla K\Big(\frac{|y|}{\mu}\Big),y\rangle+O(\frac{1}{\mu^{m+\sigma}})\nonumber\\
&=\int_{B_\delta(\overline{x}_{1})}u_{k}^{2^*_s-1}\xi_{k}\langle\nabla K\Big(\frac{|y|}{\mu}\Big),y-\overline{x}_{1}\rangle+r\int_{B_\delta(\overline{x}_{1})}u_{k}^{2^*_s-1}\xi_{k}\frac{\partial K\Big(\frac{|y|}{\mu}\Big)}{\partial y_1}+O(\frac{1}{\mu^{m+\sigma}}),
\end{align*}
which gives
\begin{align*}
-r\int_{B_\delta(\overline{x}_{1})}u_{k}^{2^*_s-1}\xi_{k}\frac{\partial K\Big(\frac{|y|}{\mu}\Big)}{\partial y_1}=\int_{B_\delta(\overline{x}_{1})}u_{k}^{2^*_s-1}\xi_{k}\langle\nabla K\Big(\frac{|y|}{\mu}\Big),y-\overline{x}_{1}\rangle+O(\frac{1}{\mu^{m+\sigma}}).
\end{align*}
However, we have
\begin{align}\label{411}
&\quad\int_{B_\delta(\overline{x}_{1})}u_{k}^{2^*_s-1}\xi_{k}\langle\nabla K\Big(\frac{|y|}{\mu}\Big),y-\overline{x}_{1}\rangle\nonumber\\
&=\int_{B_\delta(\overline{x}_{1})}u_{k}^{2^*_s-1}\xi_{k}\langle\nabla K\Big(\frac{|y|}{\mu}\Big)-\nabla K\Big(\frac{|\overline{x}_{1}|}{\mu}\Big),y-\overline{x}_{1}\rangle+O(\frac{1}{\mu^{m+\sigma}})\nonumber\\
&=\int_{B_\delta(\overline{x}_{1})}u_{k}^{2^*_s-1}\xi_{k}\langle\nabla^2 K\Big(\frac{|\overline{x}_{1}|}{\mu}\Big)\frac{y-\overline{x}_{1}}{\mu},y-\overline{x}_{1}\rangle+O(\frac{1}{\mu^{m+\sigma}})\nonumber\\
&=\frac{b_0\Delta K\Big(\frac{|\overline{x}_{1}|}{\mu}\Big)}{N\mu}\int_{\R^N}U^{2^*_s-1}\psi_0|y|^2+o(\frac{1}{\mu^{m}}).
\end{align}
Thus we obtain
\begin{align}\label{412}
\int_{B_\delta(\overline{x}_{1})}u_{k}^{2^*_s-1}\xi_{k}\frac{\partial K\Big(\frac{|y|}{\mu}\Big)}{\partial y_1}=-\frac{b_0\Delta K\Big(\frac{|\overline{x}_{1}|}{\mu}\Big)}{N\mu^2}\int_{\R^N}U^{2^*_s-1}\psi_0|y|^2+o(\frac{1}{\mu^{m}}).
\end{align}

Combining \eqref{47} and \eqref{412}, we have
\begin{align*}
&\quad-\frac{b_0\Delta K\Big(\frac{|\overline{x}_{1}|}{\mu}\Big)}{N\mu^2}\int_{\R^N}U^{2^*_s-1}\psi_0|y|^2+o(\frac{1}{\mu^{m}})\leqslant b_0O\Big(\frac{1}{\mu^m(\sqrt{1-h^2})^{N-2s}}\Big).
\end{align*}
Thus $b_0=0$.
\end{proof}

\medskip

\section*{Appendix}

\appendix

\section{Basic Estimates}
\renewcommand{\theequation}{A.\arabic{equation}}

In this section, we give some basic estimates.

For each fixed $i$ and $j,i\ne j$, consider the following function
$$g_{ij}(y)=\frac{1}{(1+|x-\overline{x}_{i}|)^{\alpha}}\frac{1}{(1+|x-\overline{x}_{j}|)^{\beta}},$$
where $\alpha\geqslant 1$ and $\beta\geqslant 1$ are two constants.
\begin{Lem}[c.f. \textbf{\cite{JS}}]\label{lemA1}
For any constant $0<\sigma\leqslant\min\{\alpha,\beta\}$, there is a constant $C>0$, such that
$$g_{ij}(y)\leqslant\frac{C}{|\overline{x}_{i}-\overline{x}_{j}|^{\sigma}}
\Big(\frac{1}{(1+|x-\overline{x}_{i}|)^{\alpha+\beta-\sigma}}+\frac{1}{(1+|x-\overline{x}_{j}|)^{\alpha+\beta-\sigma}}\Big).$$
\end{Lem}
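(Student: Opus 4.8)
The plan is to establish this pointwise estimate by a direct case split on which of the two centers the point $y$ lies closer to, using the triangle inequality to convert proximity to one center into separation from the other.

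First I would decompose $\R^N = A_i\cup A_j$ with $A_i=\{y:|y-\overline{x}_i|\leqslant|y-\overline{x}_j|\}$ and $A_j=\{y:|y-\overline{x}_j|<|y-\overline{x}_i|\}$, and treat $A_i$ in detail, the region $A_j$ being handled by the same argument after exchanging the roles of $i$ and $j$ (legitimate because the hypothesis $0<\sigma\leqslant\min\{\alpha,\beta\}$ is symmetric). On $A_i$ the triangle inequality gives $|\overline{x}_i-\overline{x}_j|\leqslant|y-\overline{x}_i|+|y-\overline{x}_j|\leqslant 2|y-\overline{x}_j|$, so that $1+|y-\overline{x}_j|\geqslant 1+\tfrac12|\overline{x}_i-\overline{x}_j|\geqslant\tfrac12\bigl(1+|\overline{x}_i-\overline{x}_j|\bigr)$, and also $1+|y-\overline{x}_i|\leqslant 1+|y-\overline{x}_j|$.

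Next I would peel off exactly $\sigma$ powers of $1+|y-\overline{x}_j|$. Writing $\beta=\sigma+(\beta-\sigma)$ with $\beta-\sigma\geqslant 0$ (this is where the constraint $\sigma\leqslant\beta$ enters) and using the two inequalities from the previous step, one obtains
$$
\frac{1}{(1+|y-\overline{x}_j|)^{\beta}}=\frac{1}{(1+|y-\overline{x}_j|)^{\sigma}}\cdot\frac{1}{(1+|y-\overline{x}_j|)^{\beta-\sigma}}\leqslant\frac{C}{|\overline{x}_i-\overline{x}_j|^{\sigma}}\cdot\frac{1}{(1+|y-\overline{x}_i|)^{\beta-\sigma}},
$$
and multiplying through by $(1+|y-\overline{x}_i|)^{-\alpha}$ gives $g_{ij}(y)\leqslant C|\overline{x}_i-\overline{x}_j|^{-\sigma}(1+|y-\overline{x}_i|)^{-(\alpha+\beta-\sigma)}$ on $A_i$. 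Symmetrically, $g_{ij}(y)\leqslant C|\overline{x}_i-\overline{x}_j|^{-\sigma}(1+|y-\overline{x}_j|)^{-(\alpha+\beta-\sigma)}$ on $A_j$, now using $\sigma\leqslant\alpha$. Adding these two bounds over $A_i\cup A_j=\R^N$ yields the claim.

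Since the argument is entirely elementary there is no real obstacle; the only points worth noting are the role of $\sigma\leqslant\min\{\alpha,\beta\}$, which keeps both leftover exponents $\alpha-\sigma$ and $\beta-\sigma$ nonnegative so that monotonicity of $t\mapsto(1+t)^{-p}$ may be invoked freely, and the fact that one in fact obtains the slightly sharper denominator $(1+|\overline{x}_i-\overline{x}_j|)^{\sigma}$, from which the stated form follows at once since $1+|\overline{x}_i-\overline{x}_j|\geqslant|\overline{x}_i-\overline{x}_j|$. The same proof applies verbatim when any of the centers $\overline{x}_i$ is replaced by a lower center $\underline{x}_i$, so no separate argument is needed for the mixed sums appearing in the body of the paper.
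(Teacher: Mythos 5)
Your proof is correct and is essentially the same argument as the one the paper defers to via the citation to Wei--Yan: split $\R^N$ according to which center $y$ is closer to, use the triangle inequality to bound the far factor below by $\tfrac12(1+|\overline{x}_i-\overline{x}_j|)$, and peel off $\sigma$ powers, with $\sigma\leqslant\min\{\alpha,\beta\}$ guaranteeing the leftover exponents are nonnegative. Nothing further is needed.
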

\begin{Lem}[c.f. \textbf{\cite{YJ}}]\label{lemA2}
For any constant $0<\sigma<N-2s$, there is a constant $C>0$, such that
$$\int_{\R^N}\frac{1}{|y-z|^{N-2s}}\frac{1}{(1+|z|)^{2s+\sigma}}dz\leqslant\frac{C}{(1+|y|)^\sigma}.$$
\end{Lem}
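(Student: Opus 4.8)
The plan is to treat Lemma \ref{lemA2} as a standard potential-decay estimate and prove it by decomposing $\R^N$ according to the sizes of $|z|$ and $|z-y|$ relative to $R:=|y|$. First I would dispose of the bounded range $R\leqslant 1$: the kernel $|y-z|^{-(N-2s)}$ is integrable near its singularity $z=y$ since $N-2s<N$, and for large $|z|$ the integrand has size $|z|^{-(N+\sigma)}$, which is integrable at infinity; hence the left-hand side is bounded by an absolute constant, and since $(1+|y|)^{-\sigma}$ is bounded above and below for $R\leqslant 1$ the claimed inequality holds on that range for a suitable $C$.

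For $R>1$ I would split $\R^N=A_1\cup A_2\cup A_3$ with $A_1=\{|z-y|\leqslant R/2\}$, $A_2=\{|z|\leqslant R/2\}$ and $A_3=\{|z|>R/2,\ |z-y|>R/2\}$, and bound each contribution by a constant multiple of $R^{-\sigma}$. On $A_1$ one has $|z|\geqslant R/2$, so $(1+|z|)^{-(2s+\sigma)}\leqslant CR^{-(2s+\sigma)}$ can be pulled out, while $\int_{|z-y|\leqslant R/2}|z-y|^{-(N-2s)}\,dz\leqslant CR^{2s}$, giving $O(R^{-\sigma})$. On $A_2$ one has $|z-y|\geqslant R/2$, so $|y-z|^{-(N-2s)}\leqslant CR^{-(N-2s)}$ comes out, while $\int_{|z|\leqslant R/2}(1+|z|)^{-(2s+\sigma)}\,dz\leqslant CR^{N-2s-\sigma}$ because $2s+\sigma<N$, again giving $O(R^{-\sigma})$. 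On $A_3$ the weight obeys $(1+|z|)^{-(2s+\sigma)}\leqslant C|z|^{-(2s+\sigma)}$, and I would subdivide once more: on $\{R/2<|z|\leqslant 2R\}$ the constraint $|z-y|>R/2$ forces $|y-z|^{-(N-2s)}\leqslant CR^{-(N-2s)}$ and the set has measure $\leqslant CR^{N}$, whereas on $\{|z|>2R\}$ one has $|z-y|\geqslant|z|/2$, so the integrand is $\leqslant C|z|^{-(N+\sigma)}$ with integrable tail; both pieces contribute $O(R^{-\sigma})$. Summing over $A_1,A_2,A_3$ proves the estimate for $R>1$, and combining the two ranges finishes the proof.

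The main point to get right — there is no deep obstacle, only bookkeeping — is the region $A_2$ estimate, where $\int_{|z|\leqslant R/2}(1+|z|)^{-(2s+\sigma)}\,dz\leqslant CR^{N-2s-\sigma}$ needs precisely $2s+\sigma<N$, i.e.\ $\sigma<N-2s$; for $\sigma=N-2s$ one picks up a logarithmic loss, and for $\sigma>N-2s$ the rate $R^{-\sigma}$ is destroyed. In the write-up I would therefore display the four region estimates explicitly, each with a constant manifestly independent of $y$, relying only on the elementary computations of $\int_{|w|\leqslant\rho}|w|^{-(N-2s)}\,dw$ and $\int_{|z|\leqslant\rho}(1+|z|)^{-(2s+\sigma)}\,dz$. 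One could alternatively invoke a Hardy--Littlewood--Sobolev type bound, as the reference to \cite{YJ} suggests, but the self-contained decomposition above needs nothing further.
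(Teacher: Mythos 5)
Your proof is correct. The paper itself gives no argument for this lemma --- it is simply quoted from \cite{YJ} --- so there is nothing in the text to compare against; your self-contained region decomposition is the standard way to prove such a convolution-decay estimate, and every step checks out. In particular, on $A_1$ the bound $\int_{|z-y|\leqslant R/2}|z-y|^{-(N-2s)}dz\leqslant CR^{2s}$ combined with $(1+|z|)^{-(2s+\sigma)}\leqslant CR^{-(2s+\sigma)}$ gives $O(R^{-\sigma})$; on $A_2$ you correctly isolate the only place where $\sigma<N-2s$ is used, namely $\int_{|z|\leqslant R/2}(1+|z|)^{-(2s+\sigma)}dz\leqslant CR^{N-2s-\sigma}$; and on $A_3$ the split at $|z|=2R$ handles both the intermediate shell (measure $CR^N$ against kernel $CR^{-(N-2s)}$ and weight $CR^{-(2s+\sigma)}$) and the far tail (integrand $\leqslant C|z|^{-(N+\sigma)}$, integrable since $\sigma>0$). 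The three sets cover $\R^N$, the case $|y|\leqslant 1$ is handled by local integrability of the kernel ($N-2s<N$) together with the $|z|^{-(N+\sigma)}$ decay of the product at infinity, and all constants are visibly independent of $y$. Writing the argument out as you describe would make the paper more self-contained than the bare citation it currently relies on.
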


\begin{Lem}\label{lemA3}
Suppose $\tau\in(0,\frac{N-2s}{2}), y=(y_1,y_2,\cdots,y_N)$. Then there is a small $\theta>0$, such that when $y_3\geqslant0$,
\begin{align*}
\int_{\R^N}&\frac{1}{|y-z|^{N-2s}}W^{\frac{4s}{N-2s}}_{r,h,\Lambda}\sum_{j=1}^k
\frac{1}{(1+|z-\overline{x}_{j}|)^{\frac{N-2s}{2}+\tau}}dz
\leqslant C\sum_{j=1}^k\frac{1}{(1+|y-\overline{x}_{j}|)^{\frac{N-2s}{2}+\tau+\theta}},
\end{align*}
and when $y_3<0$,
\begin{align*}
\int_{\R^N}&\frac{1}{|y-z|^{N-2s}}W^{\frac{4s}{N-2s}}_{r,h,\Lambda}\sum_{j=1}^k
\frac{1}{(1+|z-\underline{x}_{j}|)^{\frac{N-2s}{2}+\tau}}dz
\leqslant C\sum_{j=1}^k\frac{1}{(1+|y-\underline{x}_{j}|)^{\frac{N-2s}{2}+\tau+\theta}}.
\end{align*}
\end{Lem}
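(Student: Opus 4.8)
The plan is to bound the left-hand side by a finite sum of elementary two-center convolutions and then invoke Lemmas~\ref{lemA1} and \ref{lemA2}. Since $2^*_s-2=\frac{4s}{N-2s}$ and $U_{x,\Lambda}(z)\le C(1+|z-x|)^{-(N-2s)}$, one first records the pointwise bound
$$W_{r,h,\Lambda}^{\frac{4s}{N-2s}}(z)\le C\Big(\sum_{i=1}^k\frac{1}{(1+|z-\overline{x}_i|)^{4s}}+\sum_{i=1}^k\frac{1}{(1+|z-\underline{x}_i|)^{4s}}\Big),$$
valid directly when $\frac{4s}{N-2s}\le1$ and, in the two borderline ranges ($N=4,5$ with $s$ close to $1$), after the splitting $\R^N=\bigcup_j(\Omega_j^+\cup\Omega_j^-)$ on each piece of which a single bubble dominates, the residual cross terms being of strictly lower order. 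It therefore suffices to estimate, for centers $p\in\{\overline{x}_i,\underline{x}_i\}$ and $q=\overline{x}_j$, the integral $I(p,q):=\int_{\R^N}|y-z|^{-(N-2s)}(1+|z-p|)^{-4s}(1+|z-q|)^{-(\frac{N-2s}{2}+\tau)}\,dz$ and to sum the result over all $p$ and $q$.

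For the diagonal contributions $p=q=\overline{x}_j$ the integrand reduces to $|y-z|^{-(N-2s)}(1+|z-\overline{x}_j|)^{-(4s+\frac{N-2s}{2}+\tau)}$; since $\tau<\frac{N-2s}{2}$ one fixes $\theta\in(0,2s]$ with $\theta<\frac{N-2s}{2}-\tau$, bounds $(1+|z-\overline{x}_j|)^{-(2s-\theta)}\le 1$, and applies Lemma~\ref{lemA2} with $\sigma=\frac{N-2s}{2}+\tau+\theta<N-2s$, obtaining $I(\overline{x}_j,\overline{x}_j)\le C(1+|y-\overline{x}_j|)^{-(\frac{N-2s}{2}+\tau+\theta)}$, and these $k$ terms are exactly of the required form. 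For the off-diagonal contributions $p\ne q$, one applies Lemma~\ref{lemA1} to the product $(1+|z-p|)^{-4s}(1+|z-q|)^{-(\frac{N-2s}{2}+\tau)}$ with an exponent $\alpha$, producing $|p-q|^{-\alpha}$ times a sum of two single-center weights, and then Lemma~\ref{lemA2} to each of them; the key is to choose $\alpha$ in the nonempty range between $\frac{N-2s-m}{N-2s}$ (which makes the sums of $|p-q|^{-\alpha}$ over the centers stay bounded, via $\mu=k^{\frac{N-2s}{N-2s-m}}$) and $\tau$ (which guarantees, together with $\tau<\frac{N-2s}{2}$, that enough decay in $y$ survives), possibly after shrinking $\theta$ to absorb the loss.

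The remaining work is to control, after summing in $p$ and $q$, the geometric sums $\sum_{j\ge2}|\overline{x}_j-\overline{x}_1|^{-\alpha}$ and $\sum_{j\ge1}|\underline{x}_j-\overline{x}_1|^{-\alpha}$, for which one uses $|\overline{x}_j-\overline{x}_1|=2r\sqrt{1-h^2}\sin\frac{(j-1)\pi}{k}$, $|\underline{x}_j-\overline{x}_1|=2r\sqrt{(1-h^2)\sin^2\frac{(j-1)\pi}{k}+h^2}$, together with $r\sim\mu r_0$, $\mu=k^{\frac{N-2s}{N-2s-m}}$ and $h=O(k^{-\frac{N-2s-1}{N-2s+1}})$. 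Both sums are then seen to be bounded uniformly in $k$, so the off-diagonal part is also $\le C\sum_j(1+|y-\overline{x}_j|)^{-(\frac{N-2s}{2}+\tau+\theta)}$, and the case $y_3<0$ follows verbatim after exchanging $\overline{x}_i\leftrightarrow\underline{x}_i$. The main obstacle is the second geometric sum: the upper and lower circles of the oblate cylinder are separated by the height $h$ rather than by the in-circle spacing $\sim\mu/k$, and one has to verify that the decay $|\underline{x}_j-\overline{x}_1|^{-\alpha}\leqslant C(rh)^{-\alpha}$ produced by $\mu h\to\infty$ (which is precisely why $h$ is chosen of size $k^{-(N-2s-1)/(N-2s+1)}$) is strong enough to absorb the factor $k$ coming from the $k$ lower centers — a point that has no counterpart in the single-circle problem.
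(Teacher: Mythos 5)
The paper itself omits the proof of this lemma (it defers to \cite{YJ}), so your argument has to stand on its own, and most of it does: the diagonal estimate, the pointwise bound on $W^{4s/(N-2s)}$ (modulo a small, absorbable loss in the exponent when $\tfrac{4s}{N-2s}>1$), and the two geometric sums via Lemma~\ref{lemA4} are all correct. The gap is in the off-diagonal step, in your constraints on $\alpha$. After Lemma~\ref{lemA1} with parameter $\alpha$ the single-center exponent is $4s+\tfrac{N-2s}{2}+\tau-\alpha$, and Lemma~\ref{lemA2} then returns decay $(1+|y-\cdot|)^{-\sigma'}$ with $\sigma'=2s+\tfrac{N-2s}{2}+\tau-\alpha$. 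To dominate a term of the right-hand side you need $\sigma'\geqslant\tfrac{N-2s}{2}+\tau+\theta$, i.e.\ $\alpha\leqslant 2s-\theta$; your stated upper constraint $\alpha<\tau$ together with $\tau<\tfrac{N-2s}{2}$ does \emph{not} imply this — the relevant comparison is between $\alpha$ and $2s$, not between $\alpha$ and $\tau$. On the other hand, boundedness of $\sum_{j\geqslant2}|\overline{x}_1-\overline{x}_j|^{-\alpha}\sim k^{1-\alpha\frac{N-2s}{N-2s-m}}$ forces $\alpha\geqslant\tfrac{N-2s-m}{N-2s}$. The window $[\tfrac{N-2s-m}{N-2s},\,2s-\theta]$ is empty whenever $2s\leqslant\tfrac{N-2s-m}{N-2s}$, which the hypotheses allow: for $N=4$, $s=\tfrac1{10}$ and $m$ slightly above $\tfrac{N-2s}{2}$ one has $\tfrac{N-2s-m}{N-2s}$ close to $\tfrac12$ while $2s=\tfrac15$ (and Lemma~\ref{lemA1} itself requires $\alpha\leqslant 4s$, which fails for the same reason). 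A small coefficient $\sum_q|p-q|^{-\alpha}=o(1)$ cannot repair a deficit $\sigma'<\tfrac{N-2s}{2}+\tau+\theta$, since that deficit is unbounded as $|y-q|\to\infty$; shrinking $\theta$ does not reopen the window either.

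The inequality is still true in that regime, but the proof must retain information that a single uniform application of Lemma~\ref{lemA1} discards, namely the relation between $|y-\overline{x}_{j_0}|$ and the spacing. One splits according to whether $y$ lies within $O(|\overline{x}_1-\overline{x}_2|)$ of some center or is far from all of them. In the far regime each off-diagonal convolution already decays like $(1+|y-\overline{x}_j|)^{-(\frac{N-2s}{2}+\tau+2s)}$ uniformly in $j$, and the resulting factor $k$ is matched by the $k$ terms of the right-hand side. In the near regime, with $R=|y-\overline{x}_{j_0}|\leqslant d_j=|\overline{x}_{j_0}-\overline{x}_j|$, one decomposes the $z$-integral into $B_{d_j/2}(\overline{x}_{j_0})$, $B_{d_j/2}(\overline{x}_j)$ and the remainder; the worst contribution is of the form $d_j^{-(\frac{N-2s}{2}+\tau)}R^{-2s}$, which is $\leqslant CR^{-(\frac{N-2s}{2}+\tau+\theta)}$ precisely because $R\leqslant d_j$ and $\theta\leqslant 2s$. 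You should either add this case analysis or explicitly restrict to parameters with $2s-\theta\geqslant\tfrac{N-2s-m}{N-2s}$, in which case your argument closes with any $\alpha$ in that interval.
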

\begin{proof}
The proof of Lemma \ref{lemA3} is similar to \cite{YJ}, we omit it here.
\end{proof}
\begin{Lem}\label{lemA4}\it For any $\tau>0$, there is a constant $C>0$, such that
\begin{align}\label{a1}
\sum_{i=2}^k\frac{1}{|\overline{x}_{1}-\overline{x}_{i}|^{\tau}}\leqslant\frac{Ck^\tau}{(r\sqrt{1-h^2})^{\tau}}\sum_{i=2}^k\frac{1}{i^\tau}
&=\begin{cases}
\dfrac{Ck^\tau}{(r\sqrt{1-h^2})^{\tau}}\Big(1+O(\frac{1}{k^2})\Big),&\text{if} \ \tau>1,\\
\dfrac{Ck^\tau}{(r\sqrt{1-h^2})^{\tau}}\Big(1+O(\frac{\ln k}{k^2})\Big),&\text{if}\ \tau\leqslant1,
\end{cases}
\end{align}
and
\begin{align}\label{a2}
&\quad\quad\sum_{i=1}^k\frac{1}{|\overline{x}_{1}-\underline{x}_{i}|^{\tau}}
=\frac{1}{(rh)^\tau}\frac{D_1hk}{\sqrt{1-h^2}}\Big(1+o(\frac{1}{hk})\Big)+\begin{cases}
O\Big(\dfrac{Ck^\tau}{(r\sqrt{1-h^2})^{\tau}}\Big),&\text{if} \ \tau>1,\\
O\Big(\dfrac{Ck\ln k}{(r\sqrt{1-h^2})^{\tau}}\Big),&\text{if}\ \tau\leqslant1,
\end{cases}
\end{align}
where $D_1=\int_0^{+\infty}\frac{1}{(x^2+1)^{\frac{\tau}{2}}}dx$.
\end{Lem}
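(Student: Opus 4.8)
\emph{Proof plan for Lemma \ref{lemA4}.} The strategy is to write the mutual distances explicitly and thereby reduce everything to a one–dimensional estimate. From the coordinates of $\overline{x}_j$ and $\underline{x}_j$ one computes, for angular separation $\tfrac{2(i-1)\pi}{k}$,
\[
|\overline{x}_{1}-\overline{x}_{i}|=2r\sqrt{1-h^{2}}\,\sin\tfrac{(i-1)\pi}{k}\ \ (2\le i\le k),\qquad
|\overline{x}_{1}-\underline{x}_{i}|=2r\sqrt{(1-h^{2})\sin^{2}\tfrac{(i-1)\pi}{k}+h^{2}}\ \ (1\le i\le k),
\]
which are precisely the identities already used in Step~2 of the proof of Lemma \ref{lem45}. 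Thus \eqref{a1} is the sum with the $h^{2}$ term removed and \eqref{a2} the sum with it kept, and I would treat the two in turn, starting with the simpler case \eqref{a1}.

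For \eqref{a1} I would use the elementary inequalities $\tfrac{2}{\pi}\theta\le\sin\theta\le\theta$ on $[0,\tfrac{\pi}{2}]$ together with the symmetry $\sin\tfrac{(i-1)\pi}{k}=\sin\tfrac{(k+1-i)\pi}{k}$ to obtain $\sin\tfrac{(i-1)\pi}{k}\ge\tfrac{c}{k}\min\{i-1,\,k+1-i\}$ for $2\le i\le k$. Inserting this bound and noting that each value of $\min\{i-1,k+1-i\}$ is attained at most twice gives
\[
\sum_{i=2}^{k}\frac{1}{|\overline{x}_{1}-\overline{x}_{i}|^{\tau}}
\le\frac{C k^{\tau}}{(r\sqrt{1-h^{2}})^{\tau}}\sum_{i=2}^{k}\frac{1}{i^{\tau}},
\]
and the three regimes of $\tau$ in the statement follow from the classical integral–comparison asymptotics of the partial sums of $\sum i^{-\tau}$ (a convergent series with remainder $O(k^{1-\tau})$ when $\tau>1$, of order $\ln k$ when $\tau=1$, of order $k^{1-\tau}$ when $\tau<1$), after absorbing the limiting constant into $C$.

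For \eqref{a2} I would read the sum as a Riemann sum, with mesh $\pi/k$ in the variable $t=\tfrac{(i-1)\pi}{k}\in[0,\pi)$, for
\[
\frac{k}{\pi(2r)^{\tau}}\int_{0}^{\pi}\frac{dt}{\big[(1-h^{2})\sin^{2}t+h^{2}\big]^{\tau/2}}.
\]
For $\tau>1$ this integral concentrates, as $h\to0$, near $t=0$ and (by symmetry) near $t=\pi$; on a neighbourhood of $t=0$ the substitution $u=\tfrac{\sqrt{1-h^{2}}}{h}\sin t$ converts it into $\tfrac{1}{h^{\tau-1}\sqrt{1-h^{2}}}\int_{0}^{T_{h}}(u^{2}+1)^{-\tau/2}\,du$ with $T_{h}\to\infty$, which tends to $\tfrac{D_{1}}{h^{\tau-1}\sqrt{1-h^{2}}}$. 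Collecting the resulting constants produces the leading term $\tfrac{1}{(rh)^{\tau}}\tfrac{D_{1}hk}{\sqrt{1-h^{2}}}\big(1+o(\tfrac{1}{hk})\big)$, the factor $1+o(\tfrac{1}{hk})$ being the quantitative form of the Riemann–sum error; the contribution of the indices away from the two peaks, together with the whole sum when $\tau\le1$ (a regime in which $D_{1}$ diverges, the integral no longer concentrates, and the bulk is only logarithmically divergent), is controlled by the $O$–terms using the same estimates as in \eqref{a1}.

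The point requiring care — and the main obstacle — is the uniformity of the Riemann–sum approximation on the ``peak block'' of indices with $i-1\lesssim hk$ (and symmetrically $k+1-i\lesssim hk$): there the summand has size $(rh)^{-\tau}$ and varies on the index scale $hk$, so one must verify that the discretisation error is genuinely a lower–order correction. This is exactly where the scaling $h\asymp k^{-(N-2s-1)/(N-2s+1)}$ built into \eqref{qe} enters: it forces both $h\to0$ (so $T_{h}\to\infty$ and the limiting constant $D_{1}$ is actually seen) and $hk\asymp k^{2/(N-2s+1)}\to\infty$ (so the peak block contains many lattice points and the comparison with the integral is legitimate), and the error $o(\tfrac{1}{hk})$ in \eqref{a2} is the precise shape of this estimate. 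Everything else is bookkeeping of constants.
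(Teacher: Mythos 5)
Your proposal is correct and follows essentially the same route as the paper: explicit computation of $|\overline{x}_{1}-\overline{x}_{i}|=2r\sqrt{1-h^2}\sin\frac{(i-1)\pi}{k}$ and $|\overline{x}_{1}-\underline{x}_{i}|=2r\sqrt{(1-h^2)\sin^2\frac{(i-1)\pi}{k}+h^2}$, the bound $\sin\theta\asymp\theta$ on the folded half-range for \eqref{a1}, and for \eqref{a2} a Riemann-sum/integral comparison near the peak followed by the rescaling that produces $D_1=\int_0^{+\infty}(x^2+1)^{-\tau/2}dx$ with error $o(\tfrac{1}{hk})$, the off-peak indices being absorbed into the $O$-terms exactly as in \eqref{a1}. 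Your additional remark on why $h\to0$ and $hk\to\infty$ are needed for the Riemann-sum step is consistent with (and slightly more explicit than) the paper's argument.
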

\begin{proof} Recall
\begin{align*}
&\overline{x}_{1}=r\Bigl(\sqrt{1-h^2},0,h,\textbf{0}\Bigl),\quad
\overline{x}_{i}=r\Bigl(\sqrt{1-h^2}\cos\frac{2(i-1)\pi}{k},\sqrt{1-h^2}\sin\frac{2(i-1)\pi}{k},h,\textbf{0}\Bigl),\nonumber\\
&\overline{x}_{i}-\overline{x}_{1}=r\Bigl(\sqrt{1-h^2}\cos\frac{2(i-1)\pi}{k}-\sqrt{1-h^2},\sqrt{1-h^2}\sin\frac{2(i-1)\pi}{k},0,\textbf{0}\Bigl).\nonumber
\end{align*}
We get
$|\overline{x}_{i}-\overline{x}_{1}|=2r\sqrt{1-h^2}\sin\frac{(i-1)\pi}{k}.$
For any $\tau>0$, it holds
\begin{align}
\sum_{i=2}^k\frac{1}{|\overline{x}_{1}-\overline{x}_{i}|^{\tau}}&=\frac{1}{(2r\sqrt{1-h^2})^{\tau}}\sum_{i=2}^k\frac{1}{(\sin\frac{(i-1)\pi}{k})^{\tau}}\nonumber\\
&=\begin{cases}
\dfrac{1}{(2r\sqrt{1-h^2})^{\tau}}\sum_{i=2}^\frac{k}{2}\dfrac{1}{(\sin\frac{(i-1)\pi}{k})^{\tau}}+\dfrac{1}{(2r\sqrt{1-h^2})^{\tau}},&\text{if} \ \text{k} \ \text{is}\ \text{even},\\
\dfrac{1}{(2r\sqrt{1-h^2})^{\tau}}\sum_{i=2}^{[\frac{k}{2}]}\dfrac{1}{(\sin\frac{(i-1)\pi}{k})^{\tau}},&\text{if}\ \text{k}\ \text{is} \ \text{odd}.\nonumber
\end{cases}
\end{align}
Note that there exist two constants $C_1,C_2>0$, such that
$$0<C_{1}\leqslant\frac{\sin\frac{(i-1)\pi}{k}}{\frac{(i-1)\pi}{k}}\leqslant C_{2},\ \ i=2,\cdots,[\frac{k}{2}]+1.$$
Thus for any $\tau>\frac{N-2s-m}{N-2s}$, we have \eqref{a1}.

\smallskip

On the other hand, we get
$$|\overline{x}_{1}-\underline{x}_{i}|=2r\sqrt{(1-h^2)\sin^2\frac{(i-1)\pi}{k}+h^2}.$$
Similarly, we have
\begin{align*}
\sum_{i=1}^k\frac{1}{|\overline{x}_{1}-\underline{x}_{i}|^{\tau}}&=\sum_{i=1}^k\frac{1}{\Big(2r\sqrt{(1-h^2)\sin^2\frac{(i-1)\pi}{k}+h^2}\Big)^\tau}\\
&=\frac{2}{(2rh)^\tau}\sum_{i=1}^{\frac{[k]}{2}+1}\frac{1}{\Big(1+\frac{1-h^2}{h^2}
\frac{(i-1)^2\pi^2}{k^2}\Big)^{\frac{\tau}{2}}}+\begin{cases}
O\Big(\dfrac{Ck^\tau}{(r\sqrt{1-h^2})^{\tau}}\Big),&\text{if} \ \tau>1,\\
O\Big(\dfrac{Ck\ln k}{(r\sqrt{1-h^2})^{\tau}}\Big),&\text{if}\ \tau\leqslant1.
\end{cases}\\
\end{align*}
We obtain
\begin{align*}
\sum_{i=1}^{\frac{[k]}{2}+1}\frac{1}{\Big(1+\frac{1-h^2}{h^2}
\frac{(i-1)^2\pi^2}{k^2}\Big)^{\frac{\tau}{2}}}&=\int_0^{\frac{[k]}{2}+1}\frac{1}{\Big(1+\frac{1-h^2}{h^2}
\frac{x^2\pi^2}{k^2}\Big)^{\frac{\tau}{2}}}dx+o(1)\\
&=\frac{hk}{\sqrt{1-h^2}\pi}\int_0^{\frac{\pi^2(1-h^2)}{4h^2}}\frac{1}{(x^2+1)^{\frac{\tau}{2}}}dx+o(1)\\
&=\frac{hk}{\sqrt{1-h^2}\pi}\int_0^{+\infty}\frac{1}{(x^2+1)^{\frac{\tau}{2}}}dx\Big(1+o(\frac{1}{hk})\Big).\\
\end{align*}
Thus, it holds \eqref{a2}.
\end{proof}

\begin{Lem}[c.f. \textbf{\cite{GNNT}}]\label{lemB1}
Let $\theta>0$ is a constant. Suppose that $(y-x)^2+t^2\geqslant\delta^2,t>0$ and $\alpha>N$. Then, when
$0<\beta<N$, we have
\begin{align*}
\int_{\R^N}\frac{1}{(t+|z|)^\alpha}\frac{1}{|y-z-x|^\beta}dz\leqslant
C\Big(\frac{1}{(1+|y-x|)^\beta}\frac{1}{t^{\alpha-N}}+\frac{1}{(1+|y-x|)^{\alpha+\beta-N}}\Big).
\end{align*}
\end{Lem}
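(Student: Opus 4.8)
The plan is a dyadic splitting of $\R^N$ according to which of the three scales $|z|$, $|w-z|$, $t$ is dominant, followed by an end step that uses the constraint $|w|^2+t^2\geqslant\delta^2$ to replace $|w|$ by $1+|w|$. Here I abbreviate $w:=y-x$, so the hypothesis reads $|w|^2+t^2\geqslant\delta^2$ and the integrand is $(t+|z|)^{-\alpha}|w-z|^{-\beta}$. I would decompose $\R^N=\{|z|\leqslant|w|/2\}\cup\{|w-z|\leqslant|w|/2\}\cup R$, with $R=\{|z|>|w|/2,\ |w-z|>|w|/2\}$ (the first two sets overlap only on a null set), and then split $R$ once more into $\{|w|/2\leqslant|z|\leqslant 2|w|\}$ and $\{|z|\geqslant 2|w|\}$, the latter being further cut into $\{2|w|\leqslant|z|\leqslant t\}$ and $\{|z|\geqslant\max(2|w|,t)\}$.

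On $\{|z|\leqslant|w|/2\}$ one has $|w-z|\geqslant|w|/2$, so $|w-z|^{-\beta}\leqslant C|w|^{-\beta}$; since $\alpha>N$, scaling gives $\int_{\R^N}(t+|z|)^{-\alpha}\,dz=Ct^{N-\alpha}$, hence this piece is $\leqslant C|w|^{-\beta}t^{N-\alpha}$ (and, bounding $t+|z|\geqslant t$ on this small set, also $\leqslant Ct^{-\alpha}|w|^{N-\beta}$, which is $\leqslant C\max(|w|,t)^{N-\alpha-\beta}$ when $|w|\leqslant t$). On $\{|w-z|\leqslant|w|/2\}$ one has $|z|\asymp|w|$, so $(t+|z|)^{-\alpha}\leqslant C(t+|w|)^{-\alpha}$, while $\int_{|w-z|\leqslant|w|/2}|w-z|^{-\beta}\,dz=C|w|^{N-\beta}$ because $\beta<N$; this piece is $\leqslant C|w|^{N-\beta}(t+|w|)^{-\alpha}\leqslant C\max(|w|,t)^{N-\alpha-\beta}$, using $(t+|w|)^{N-\beta}\geqslant|w|^{N-\beta}$. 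On $\{|w|/2\leqslant|z|\leqslant 2|w|\}$ the integrand is $\leqslant C|w|^{-\beta}(t+|w|)^{-\alpha}$ and the volume is $\asymp|w|^N$, giving again $\leqslant C\max(|w|,t)^{N-\alpha-\beta}$; on $\{|z|\geqslant 2|w|\}$ one has $|w-z|\geqslant|z|/2$, and $\{2|w|\leqslant|z|\leqslant t\}$ contributes $\leqslant Ct^{-\alpha}\int_{|z|\leqslant t}|z|^{-\beta}\,dz=Ct^{N-\alpha-\beta}$ (nonzero only when $t=\max(|w|,t)$), while $\{|z|\geqslant\max(2|w|,t)\}$ contributes $\leqslant C\int_{|z|\geqslant\max(2|w|,t)}|z|^{-\alpha-\beta}\,dz=C\max(|w|,t)^{N-\alpha-\beta}$, the integral converging since $\alpha+\beta>N$ (which follows from $\alpha>N$). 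Collecting all contributions,
\[
\int_{\R^N}\frac{dz}{(t+|z|)^\alpha|w-z|^\beta}\ \leqslant\ C\Big(\frac{1}{|w|^\beta\,t^{\alpha-N}}+\frac{1}{\max(|w|,t)^{\alpha+\beta-N}}\Big),
\]
and moreover the first term may be dropped when $|w|\leqslant t$.

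It remains to upgrade $|w|$ to $1+|w|$ in the two denominators. If $|w|\geqslant1$ this is immediate. If $|w|<1$, the constraint $|w|^2+t^2\geqslant\delta^2$ forces $\max(|w|,t)\geqslant\delta/\sqrt2$: either $|w|\geqslant c(\delta)>0$, in which case every negative power of $|w|$ is comparable (with constants depending on $\delta$) to the corresponding power of $1+|w|$; or else $t\geqslant c(\delta)>0$ and $|w|\leqslant t$, in which case the sharper bound $Ct^{N-\alpha-\beta}$ together with $1+|w|<2$ and $t\geqslant c(\delta)$ yields $\leqslant C(\delta)\,(1+|w|)^{-\beta}t^{N-\alpha}$. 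In all cases the right-hand side is $\leqslant C\big((1+|w|)^{-\beta}t^{-(\alpha-N)}+(1+|w|)^{-(\alpha+\beta-N)}\big)$ with $C=C(N,s,\alpha,\beta,\delta)$, which is the claim.

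The step I expect to be the main obstacle is precisely this last one: the intermediate bound $C\big(|w|^{-\beta}t^{-(\alpha-N)}+\max(|w|,t)^{-(\alpha+\beta-N)}\big)$ blows up as $|w|\to0$ with $t$ fixed, whereas the integral itself stays bounded, so one cannot finish by a pointwise comparison and must genuinely exploit that $|w|^2+t^2\geqslant\delta^2$ keeps either $|w|$ or $t$ bounded below; the bookkeeping of which region supplies which power of $t$, and in particular retaining the sharper estimate when $|w|\leqslant t$, is the delicate point. (The parameter $\theta$ appearing in the statement plays no role and can be ignored; this estimate is also the one cited as \cite{GNNT}.)
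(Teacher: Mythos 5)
Your argument is correct. Note, however, that the paper itself offers no proof of this lemma at all --- it is stated with ``c.f.\ \cite{GNNT}'' and the reader is sent to that reference --- so there is nothing in the text to compare against; your write-up actually supplies what the paper omits. The region decomposition you use ($|z|\leqslant|w|/2$, $|w-z|\leqslant|w|/2$, the annulus $|w|/2\leqslant|z|\leqslant 2|w|$, and the far zone split at $|z|=t$) is the standard one for such convolution-type bounds, and each piece is estimated correctly: the convergence $\int(t+|z|)^{-\alpha}dz=Ct^{N-\alpha}$ needs $\alpha>N$, the local integrability of $|w-z|^{-\beta}$ needs $\beta<N$, and the tail needs $\alpha+\beta>N$, all of which you invoke in the right places. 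You also correctly identify and resolve the only genuinely delicate point, namely that the scale-invariant intermediate bound $C\big(|w|^{-\beta}t^{N-\alpha}+\max(|w|,t)^{N-\alpha-\beta}\big)$ degenerates as $|w|\to0$, and that the hypothesis $|w|^2+t^2\geqslant\delta^2$ is exactly what lets you pass from $|w|$ to $1+|w|$: when $|w|$ is small the constraint forces $t\geqslant c(\delta)$ and $|w|\leqslant t$, where your sharper bound $Ct^{N-\alpha-\beta}$ applies and converts into $C(\delta)(1+|w|)^{-\beta}t^{N-\alpha}$. Your remark that $\theta$ plays no role in the statement is also accurate.
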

\begin{Lem}\label{lemB2}
Suppose that $(y-x)^2+t^2=\delta^2,t>0$,  then there is a constant $C>0$, such that
\begin{align}\label{B1}
\Big\vert\nabla \tilde U_{x,\Lambda}\Big\vert\leqslant\frac{C}{(1+|y-x|)^{N-2s+1}},
\end{align}
and
\begin{align}\label{B2}\Big\vert\nabla \tilde \psi_{i}\Big\vert\leqslant\frac{C}{(1+|y-x|)^{N-2s+1}},\quad i=0,1,2,\cdots,N.\end{align}
\end{Lem}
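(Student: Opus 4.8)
The plan is to reduce the two estimates to a single pointwise bound for the extension of the standard bubble $U_{0,1}$ and then to read that bound off the Poisson representation. Since $\mathcal{P}_s(\cdot,t)$ is a convolution kernel, the extension commutes with translations, so $\tilde U_{x,\Lambda}(y,t)=\widetilde{U_{0,\Lambda}}(y-x,t)$ and I may take $x=0$; and from $U_{0,\Lambda}(\xi)=\Lambda^{\frac{N-2s}{2}}U_{0,1}(\Lambda\xi)$ one gets $\widetilde{U_{0,\Lambda}}(y,t)=\Lambda^{\frac{N-2s}{2}}\widetilde{U_{0,1}}(\Lambda y,\Lambda t)$. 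As $k\to\infty$ the admissible $\Lambda$ stay in a fixed compact subset of $(0,\infty)$ (indeed $\Lambda_0$ is bounded away from $0$ and $\infty$, since its formula in Section~3 together with $h_0\sim k^{-\frac{N-2s-1}{N-2s+1}}$ and $\mu=k^{\frac{N-2s}{N-2s-m}}$ gives $\Lambda_0\sim1$), so all constants below can be taken uniform in $\Lambda$, hence in $k$ and in the centre $x=\overline x_j$ or $\underline x_j$. The same reduction applies to the kernels once we note that the parameter derivatives pass under the Poisson integral, so $\tilde\psi_0=\partial_\Lambda\widetilde{U_{0,\Lambda}}\big|_{\Lambda=1}$ and $\tilde\psi_i=\partial_{y_i}\widetilde{U_{0,1}}$ for $i\geqslant1$; it therefore suffices to bound $\nabla\widetilde{U_{0,1}}$ and $\nabla\widetilde{\psi_i}$ on $\{|y|^2+t^2=\delta^2,\ t>0\}$ by $C(1+|y|)^{-(N-2s+1)}$.

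For the spatial derivatives I move the derivative onto the bubble: $\partial_{y_i}\widetilde{U_{0,1}}(y,t)=\int_{\R^N}\mathcal{P}_s(y-z,t)\,\partial_{z_i}U_{0,1}(z)\,dz$, and then use $\int_{\R^N}\mathcal{P}_s(\cdot,t)\,dz=1$, the elementary bound $0\leqslant\mathcal{P}_s(w,t)\leqslant C\,t^{2s}(|w|+t)^{-(N+2s)}$, and $|\nabla U_{0,1}(z)|\leqslant C(1+|z|)^{-(N-2s+1)}$. Splitting the $z$-integral at $|z-y|=\frac12(1+|y|)$: on the inner region $|\nabla U_{0,1}(z)|\leqslant C(1+|y|)^{-(N-2s+1)}$ comes out of the integral and $\int\mathcal{P}_s\leqslant1$; on the outer region $|z-y|+t\geqslant\frac12(1+|y|)$, so the tail decay of $\mathcal{P}_s$ reduces the integral to a convolution of two algebraically decaying factors, estimated as in Lemma~\ref{lemB1} (after writing $|\nabla U_{0,1}(z)|\leqslant C\min\{1,|z|^{-(N-2s+1)}\}$), and this yields the same order $C(1+|y|)^{-(N-2s+1)}$. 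Thus $|\nabla_y\widetilde{U_{0,1}}(y,t)|\leqslant C(1+|y|)^{-(N-2s+1)}$; the same argument with $U_{0,1}$ replaced by $\psi_i$ — which satisfies $|\psi_i(z)|\leqslant C(1+|z|)^{-(N-2s)}$ and $|\nabla\psi_i(z)|\leqslant C(1+|z|)^{-(N-2s+1)}$ (and decays one power faster for $i\geqslant1$) — gives the bound for $\nabla_y\widetilde{\psi_i}$. Undoing the scaling and translation of the first step produces \eqref{B1} and \eqref{B2}.

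The main obstacle is the normal derivative. Unlike $\partial_{y_i}$, one cannot transfer $\partial_t$ onto $U_{0,1}$, so one must estimate $\partial_t\widetilde{U_{0,1}}(y,t)=\int\partial_t\mathcal{P}_s(y-z,t)U_{0,1}(z)\,dz$ directly, and $\partial_t\mathcal{P}_s$ carries the factor $t^{2s-1}$, which blows up as $t\to0$ when $s<\frac12$. On the surface $|y|^2+t^2=\delta^2$ one has to check that this singular factor is always absorbed — by the $t^{-2s}$ produced by the near-diagonal integration together with the decay of $U_{0,1}$ — so that the bound stays uniform up to the poles $t\to0$ of the surface; equivalently, the $\nabla$ in \eqref{B1}--\eqref{B2} is understood as the tangential gradient $\nabla_y$ (the form actually used in the Pohozaev computations, where the outward normal to $\partial''B_\delta^+$ has no $t$-component), and then the estimate is just the continuity of $\nabla_y\widetilde{U_{0,1}}$ up to $\{t=0\}$ in the spatial variables together with its decay at infinity established above. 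I expect the bookkeeping around this small-$t$/near-pole regime, and the verification that every constant is genuinely independent of $k$, to be the only points requiring care; both follow cleanly once the reductions of the first step are in place.
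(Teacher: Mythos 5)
Your treatment of the spatial derivatives is fine and is essentially the paper's argument: after centering and rescaling, the derivative is passed onto the bubble and the resulting convolution of two algebraically decaying factors is estimated by a splitting of the type in Lemma \ref{lemA11}/\ref{lemB1}. (The paper reaches the same point slightly differently, by first substituting $\xi=y-tz$ so that the Poisson kernel becomes the $t$-independent weight $(1+|z|^2)^{-\frac{N+2s}{2}}$ and the $y$-derivative falls on $U_{x,\Lambda}(y-tz)$, then undoing the substitution and invoking Lemma \ref{lemB1} with $\alpha=N+2s-1$, $\beta=N-2s+1$.)

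The genuine gap is the $t$-derivative. You correctly identify it as the obstacle, but you neither prove the stated bound for $\partial_t\tilde U$ nor may you replace $\nabla$ by the tangential gradient: the Pohozaev identities used in Lemma \ref{lem45} contain the full inner product $\langle\nabla\tilde u,\nabla\tilde\xi\rangle$, which involves $\partial_t\tilde u\,\partial_t\tilde\xi$, so the lemma is applied with the $(N+1)$-dimensional gradient and a tangential-only version does not suffice. The idea you are missing (and which the paper uses) is that the substitution $\xi=y-tz$ does let you ``transfer'' $\partial_t$ onto the bubble: after the change of variables,
$\partial_t\tilde U_{x,\Lambda}(y,t)=-c\int_{\R^N}(1+|z|^2)^{-\frac{N+2s}{2}}\,z\cdot\nabla U_{x,\Lambda}(y-tz)\,dz$,
so the extra factor $|z|$ is absorbed into the kernel (lowering its decay from $N+2s$ to $N+2s-1$) and the same convolution estimate applies; reverting to the variable $z'=tz$ produces exactly the factor $t^{2s-1}$ you feared, but Lemma \ref{lemB1} returns a compensating $t^{-(2s-1)}=t^{N-\alpha}$ in its leading term, which is how the paper closes the estimate. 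Your unease about the regime $s<\tfrac12$, $t\to0$ is not unfounded — the secondary term $t^{2s-1}(1+|y-x|)^{-N}$ in that computation does require the restriction to $t$ bounded below or an additional argument, and the paper passes over this silently — but flagging the difficulty and then weakening the statement is not a proof of \eqref{B1}--\eqref{B2} as written; you need to carry out the substitution argument (or an equivalent direct estimate of $\int\partial_t\mathcal P_s(y-z,t)U(z)\,dz$) for the $\partial_t$ component.
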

\begin{proof}
Since $U_{x,\Lambda}=C(N,s)\frac{\Lambda^{\frac{N-2s}{2}}}{(1+\Lambda^2|y-x|^2)^{\frac{N-2s}{2}}}$,
we have
\begin{align*}
\tilde U_{x,\Lambda}(y,t)&=\beta(N,s)\int_{\R^N}\frac{t^{2s}}{(|y-\xi|^2+t^2)
^{\frac{N+2s}{2}}}U_{x,\Lambda}d\xi\\
&=\beta(N,s)C(N,s)\int_{\R^N}\frac{t^{2s}}{(|y-\xi|^2+t^2)
^{\frac{N+2s}{2}}}\frac{\Lambda^{\frac{N-2s}{2}}}{(1+\Lambda^2|\xi-x|^2)^{\frac{N-2s}{2}}}d\xi.
\end{align*}
Note that for $i=1,\cdots,N$,
\begin{align*}
&\quad\frac{\partial}{\partial y_i}\int_{\R^N}\frac{t^{2s}}{(|y-\xi|^2+t^2)
^{\frac{N+2s}{2}}}\frac{\Lambda^{\frac{N-2s}{2}}}{(1+\Lambda^2|\xi-x|^2)^{\frac{N-2s}{2}}}d\xi\\
&=\frac{\partial}{\partial y_i}\int_{\R^N}\frac{1}{(1+|z|^2)
^{\frac{N+2s}{2}}}\frac{\Lambda^{\frac{N-2s}{2}}}{(1+\Lambda^2|y-tz-x|^2)^{\frac{N-2s}{2}}}dz\\
&\leqslant C\int_{\R^N}\frac{1}{(1+|z|^2)
^{\frac{N+2s}{2}}}\frac{\partial}{\partial y_i}\frac{1}{(1+|y-tz-x|^2)^{\frac{N-2s}{2}}}dz\\
&\leqslant C\int_{\R^N}\frac{1}{(1+|z|^2)
^{\frac{N+2s}{2}}}\frac{(y-tz-x)_i}{(1+|y-tz-x|^2)^{\frac{N-2s}{2}+1}}dz
\end{align*}
and
\begin{align*}
&\quad\frac{\partial}{\partial t}\int_{\R^N}\frac{t^{2s}}{(|y-\xi|^2+t^2)
^{\frac{N+2s}{2}}}\frac{\Lambda^{\frac{N-2s}{2}}}{(1+\Lambda^2|\xi-x|^2)^{\frac{N-2s}{2}}}d\xi\\
&\leqslant C\int_{\R^N}\frac{1}{(1+|z|^2)
^{\frac{N+2s}{2}}}\frac{\sum_{i=1}^N(y-tz-x)_iz_i}{(1+|y-tz-x|^2)^{\frac{N-2s}{2}+1}}dz.
\end{align*}
By Lemma \ref{lemB1}, we  get
\begin{align*}
|\nabla \tilde U_{x,\Lambda}|&\leqslant
C\int_{\R^N}\frac{1}{(1+|z|)^{N+2s-1}}\frac{1}{(1+|y-tz-x|)^{N-2s+1}}dz\\
&\leqslant C\int_{\R^N}\frac{t^{2s-1}}{(t+|z|)^{N+2s-1}}\frac{1}{(1+|y-z-x|)^{N-2s+1}}dz\\
&\leqslant Ct^{2s-1}\Big(\frac{1}{(1+|y-x|)^{N-2s+1}}\frac{1}{t^{2s-1}}+\frac{1}{(1+|y-x|)^{N}}\Big)\\
&\leqslant \frac{C}{(1+|y-x|)^{N-2s+1}}.
\end{align*}
The proof of \eqref{B2} is similar to  \eqref{B1}, so we omit it  here.
\end{proof}

In order to apply the Pohozaev identities in the unbounded domain in $\mathbb R_+^{N+1}$, it is necessary to verify  the following result.
\begin{Lem}\label{lemA11}
Suppose $\Omega$ is a unbounded domain, we have that the following integrals are finite, i.e.,
\begin{align*}
\Big\vert\int_{\overline \Omega\times(M,+\infty)}t^{1-2s}\frac{\partial\tilde u}{\partial\nu}\frac{\partial\tilde \xi}{\partial y_i}+t^{1-2s}\frac{\partial\tilde \xi}{\partial\nu}\frac{\partial\tilde u}{\partial y_i}\Big\vert,\quad \int_{\overline \Omega\times(M,+\infty)}t^{1-2s}|\nabla\tilde u||\nabla\tilde \xi|,\quad \Big\vert\int_{\overline \Omega\times(M,+\infty)}t^{1-2s}\left<\nabla\tilde u,\nabla\tilde \xi\right>\left<Y-(X_0,0),\nu\right>\Big\vert,\\
\Big\vert\int_{\overline \Omega\times(M,+\infty)}t^{1-2s}\frac{\partial\tilde u}{\partial\nu}\tilde \xi+t^{1-2s}\frac{\partial\tilde \xi}{\partial\nu}\tilde u\Big\vert,\quad
\Big\vert\int_{\overline \Omega\times(M,+\infty)}t^{1-2s}\frac{\partial\tilde u}{\partial\nu}\left<\nabla\tilde\xi,Y-(X_0,0)\right>+t^{1-2s}\frac{\partial\tilde \xi}{\partial\nu}\left<\nabla\tilde u,Y-(X_0,0)\right>\Big\vert<+\infty.
\end{align*}
\end{Lem}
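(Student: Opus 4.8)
The plan is to reduce all six families of integrals to pointwise decay of the extensions $\tilde u_k,\tilde\xi_k$ and of their gradients on the slab $\{t>M\}$, and then to carry out the $(y,t)$--integration by a bare power count. Write $Y=(y,t)$ and $\rho=|Y|=\sqrt{|y|^2+t^2}$. Since all $2k$ centres lie on the sphere $|\xi|=r=O(\mu)$ (with $k$, hence $\mu$ and $r$, fixed here), Lemma \ref{lem42} and Lemma \ref{lem44} give the global bound $|u_k(\xi)|+|\xi_k(\xi)|\le C(1+|\xi|)^{-(N-2s)}$ on $\R^N$. Inserting this into $\tilde u_k=\mathcal P_s[u_k]$, i.e. $\tilde u_k(Y)=\beta(N,s)\int_{\R^N}t^{2s}(|y-\xi|^2+t^2)^{-\frac{N+2s}{2}}u_k(\xi)\,d\xi$, and splitting at $|\xi|=\rho/2$: on $\{|\xi|\le\rho/2\}$ one has $|Y-(\xi,0)|\ge\rho/2$, so the kernel is $\le Ct^{2s}\rho^{-(N+2s)}$ and $\int_{|\xi|\le\rho/2}(1+|\xi|)^{-(N-2s)}\,d\xi\le C\rho^{2s}$, giving an $O(\rho^{-(N-2s)})$ contribution since $t\le\rho$; on $\{|\xi|>\rho/2\}$ one has $|u_k(\xi)|\le C\rho^{-(N-2s)}$ while $\int_{\R^N}t^{2s}(|y-\xi|^2+t^2)^{-\frac{N+2s}{2}}\,d\xi=\beta(N,s)^{-1}$ by the normalisation of $\mathcal P_s$, again $O(\rho^{-(N-2s)})$. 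Together with the same bound for $\tilde\xi_k$ this yields $|\tilde u_k(Y)|+|\tilde\xi_k(Y)|\le C\rho^{-(N-2s)}$ for $\rho$ large.

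For the gradients I would argue by elliptic regularity for the degenerate equation $div(t^{1-2s}\nabla\,\cdot\,)=0$. If $t\ge\rho/4$, then $B_{t/2}(Y)\subset\{t>0\}$, the weight $t^{1-2s}$ is comparable to a constant there, and the interior gradient estimate gives $|\nabla\tilde u_k(Y)|\le Ct^{-1}\sup_{B_{t/2}(Y)}|\tilde u_k|\le Ct^{-1}\rho^{-(N-2s)}\le C\rho^{-(N-2s+1)}$. If instead $M<t<\rho/4$, I would rescale the half-ball $B^+_{|y|/4}\!\bigl((y,0)\bigr)$ to unit size and invoke boundary regularity for the degenerate Neumann problem: there $\tilde u_k$ solves $div(t^{1-2s}\nabla\tilde u_k)=0$ with Neumann datum $K(|\cdot|/\mu)u_k^{2^*_s-1}\le C|y|^{-(N+2s)}$ (the centres lie at distance $\gtrsim|y|$ and $u_k^{2^*_s-1}\le C(1+|y|)^{-(N+2s)}$), and $|\tilde u_k|\le C\rho^{-(N-2s)}$ on the half-ball; undoing the scaling produces $|\nabla\tilde u_k(Y)|\le C_M\,\rho^{-(N-2s)-\gamma}$ for a suitable exponent $\gamma=\gamma(s)\in(0,1]$, the $t^{2s-1}$ growth of $\partial_t\tilde u_k$ near $\{t=0\}$ (present when $s<\frac{1}{2}$) being harmless since $t>M$. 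The same holds for $\tilde\xi_k$. Altogether, for $\rho$ large, $|\nabla\tilde u_k(Y)|+|\nabla\tilde\xi_k(Y)|\le C_M\,\rho^{-(N-2s)-\gamma}$.

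It then remains a power count. Fix $M$ large enough that the preceding displays hold on $\{t>M\}$ and that $|Y-(X_0,0)|\le 2\rho$ there. Each integrand in the statement is then $\le C_M\,t^{1-2s}\rho^{-2(N-2s)-2\gamma}(1+\rho)$ or $\le C_M\,t^{1-2s}\rho^{-2(N-2s)-\gamma}$ (the unit vector $\nu$ contributes only a constant). Using $\int_{\R^N}(|y|^2+t^2)^{-a}\,dy=c_a\,t^{N-2a}$, valid for $2a>N$ — which holds for all exponents $a$ occurring here since $N\ge3$ and $s<1$ — the $y$--integration leaves, respectively, $C_M\,t^{\,2+2s-N-2\gamma}$ and $C_M\,t^{\,1+2s-N-\gamma}$, and $\int_M^{+\infty}t^{\beta}\,dt<+\infty$ because both exponents are $<-1$: this amounts to $N>3+2s-2\gamma$ and $N>2+2s-\gamma$, inequalities guaranteed by the standing hypotheses ($N=3$, $s<\frac{1}{2}$, where one uses that the $\gamma$ produced above exceeds $s$; and $N\ge4$, $s<1$, where any $\gamma>0$ suffices). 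Hence every integral is finite.

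The step I expect to be the real obstacle is the gradient estimate of the second paragraph: one must produce a genuine algebraic \emph{gain} of decay beyond the rate $\rho^{-(N-2s)}$ enjoyed by $\tilde u_k$ itself, uniformly for $t>M$, and it is precisely here — because of the nonlocal structure and the merely polynomial decay of the bubbles, in contrast with the exponentially decaying classical case — that the estimates have to be carried out carefully, splitting according to $\operatorname{dist}(Y,\{t=0\})$ and tracking the Neumann datum through the rescaling. Once that gain is available the remaining power count is elementary.
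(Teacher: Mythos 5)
Your route differs from the paper's: you first extract the pointwise decay $|\tilde u_k|+|\tilde\xi_k|\leqslant C\rho^{-(N-2s)}$ from the Poisson representation (that part is correct) and then try to upgrade it to gradient decay $\rho^{-(N-2s)-\gamma}$ via rescaled interior/boundary regularity for $\mathrm{div}(t^{1-2s}\nabla\cdot)=0$, finishing with a power count. The paper never invokes regularity theory: it differentiates the Poisson kernel directly, obtaining the pointwise bound $|\nabla\tilde u(y,t)|\leqslant C\int_{\R^N}t^{2s-1}(|y-x|^2+t^2)^{-\frac{N+2s}{2}}|u(x)|\,dx$ (exactly as in the proof of Lemma \ref{lemB2}), then moves the $t$-integral inside by Cauchy--Schwarz/Minkowski and runs convolution estimates against the decay of $u_k,\xi_k$ from Lemmas \ref{lem42} and \ref{lem44}. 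This buys the full gain of one power of $(|y-x|+t)$ for free, with no need to identify a H\"older-type exponent.

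The genuine gap in your argument sits exactly where you say it does, and it is not merely a matter of writing out details. First, the near-boundary case $M<t<\rho/4$ is the whole content of the lemma, and "boundary regularity for the degenerate Neumann problem yields some $\gamma\in(0,1]$" is not a proof: the rescaled point $(y,4t/|y|)$ sits at height $\to0$ in the unit half-ball, so you need a quantitative up-to-the-boundary gradient estimate for the $A_2$-weighted operator, with the Neumann datum tracked through the scaling, and you must extract an explicit $\gamma$. Second, your exponent bookkeeping is wrong where it matters: for $N\geqslant4$ it is \emph{not} true that any $\gamma>0$ suffices. Your own conditions require $N>3+2s-2\gamma$ for the term carrying the factor $(1+\rho)$, which for $N=4$ and $s$ close to $1$ forces $\gamma>s-\tfrac12$, close to $\tfrac12$; even the validity of $\int_{\R^N}(|y|^2+t^2)^{-a}dy=c_at^{N-2a}$ (i.e.\ $2a>N$) for that term needs $\gamma>2s-\tfrac32$ when $N=4$. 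So the conclusion genuinely depends on the unproved quantitative value of $\gamma$. The clean fix is to abandon the regularity detour and use the kernel bound above: for $t>M$ one has $t^{2s-1}\leqslant M^{2s-1}$ when $s\leqslant\tfrac12$ (and $t^{2s-1}\leqslant(|y-x|+t)^{2s-1}$ when $s>\tfrac12$), after which the iterated integral closes by elementary convolution estimates as in the paper.
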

\begin{proof}
It suffices to  prove that
\begin{align*}
&\int_{\overline \Omega\times(M,+\infty)}t^{1-2s}\vert\nabla\tilde u\vert|\nabla\tilde \xi|<+\infty,\quad \int_{\overline \Omega\times(M,+\infty)}t^{1-2s}|\nabla\tilde u||\nabla\tilde \xi|\vert|y-X_0|+t\vert<+\infty,\\&
\int_{\overline \Omega\times(M,+\infty)}t^{1-2s}|\nabla\tilde u||\tilde \xi|<+\infty,\quad \int_{\overline \Omega\times(M,+\infty)}t^{1-2s}|\nabla\tilde \xi||\tilde u|<+\infty.
\end{align*}
We only give the proof for $s\leqslant\frac{1}{2}$, the case $s>\frac{1}{2}$ can be similarly estimated. From definition, we have
\begin{align*}
&\quad\int_{\overline \Omega\times(M,+\infty)}t^{1-2s}\vert\nabla\tilde u\vert|\nabla\tilde \xi|||y-X_0|+t|dtdy\\
&\leqslant\int_{\overline \Omega}\int_M^{+\infty}t^{1-2s}||y-X_0|+t|\Big(\int_{\R^N}\frac{t^{2s-1}|u(x)|}{(|y-x|^2+t^2)^\frac{N+2s}{2}}dx\Big)\Big(
\int_{\R^N}\frac{t^{2s-1}|\xi(x)|}{(|y-x|^2+t^2)^\frac{N+2s}{2}}dx\Big)dtdy\\
&=\int_{\overline \Omega}\int_M^{+\infty}\Big(\int_{\R^N}\frac{t^{\frac{2s-1}{2}}||y-X_0|+t|^\frac{1}{2}|u(x)|}{(|y-x|+t)^{N+2s}}dx\Big)\Big(
\int_{\R^N}\frac{t^{\frac{2s-1}{2}}||y-X_0|+t|^\frac{1}{2}|\xi(x)|}{(|y-x|+t)^{N+2s}}dx\Big)dtdy\\
&\leqslant\int_{\overline \Omega}\Big(\int_{\R^N}\Big(\int_M^{+\infty}\frac{t^{2s-1}||y-X_0|+t|}{(|y-x|+t)^{2N+4s}}dt\Big)^\frac{1}{2}|u(x)|dx\Big)
\Big(\int_{\R^N}\Big(\int_M^{+\infty}\frac{t^{2s-1}||y-X_0|+t|}{(|y-x|+t)^{2N+4s}}dt\Big)^\frac{1}{2}|\xi(x)|dx\Big)dy\\
&\leqslant\int_{\overline \Omega}\Big(\int_{\R^N}\Big(\frac{|y-X_0|^\frac{1}{2}}{(|y-x|+M)^{N+s}}+
\frac{1}{(|y-x|+M)^{N+s-\frac{1}{2}}}\Big)\frac{1}{(1+|x|)^{N-2s}}dx\Big)^2dy\\
&\leqslant\int_{\overline \Omega}\Big(\Big(\int_{\R^N}\Big(\frac{|y-X_0|^\frac{1}{2}}{(|y-x|+M)^{N+s}}\frac{1}{(1+|x|)^{N-2s}}dx\Big)^2
+\Big(\int_{\R^N}\frac{1}{(|y-x|+M)^{N+s-\frac{1}{2}}}\frac{1}{(1+|x|)^{N-2s}}dx\Big)^2\Big)dy\\
&\displaystyle\leqslant\begin{cases}
\displaystyle\int_{\overline \Omega}\frac{|y-X_0|}{(|y|+1)^{2N-4s}}+\frac{(\ln|y|)^2}{(|y|+1)^{2N-4s}}dy,&\text{if} \ s=\frac{1}{2},\\
\displaystyle\int_{\overline \Omega}\frac{|y-X_0|}{(|y|+1)^{2N-4s}}+\frac{1}{(|y|+1)^{2N-2s-1}}dy,&\text{if}\ s<\frac{1}{2},
\end{cases}\\
&\displaystyle\leqslant\int_{\overline \Omega}\frac{1}{(|y|+1)^{2N-4s-1}}dy<+\infty.
\end{align*}
In fact, the last but one inequality for $s=\frac{1}{2}$ holds as follows.
 It suffices to obtain the estimate for $|y|\geqslant2$. Let $d=\frac{1}{2}|y|$, there holds that
$$\int_{B_d(0)}\frac{1}{(1+|y-z|)^{N-2s}}\frac{1}{(1+|z|)^N
}dz\leqslant\frac{1}{(1+|y|)^{N-2s}}\int_{B_d(0)}\frac{1}{(1+|z|)^Ndz
}\leqslant\frac{\ln |y|}{(1+|y|)^{N-2s}},$$
and
$$\int_{B_d(y)}\frac{1}{(1+|y-z|)^{N-2s}}\frac{1}{(1+|z|)^N
}dz\leqslant\frac{1}{(1+|y|)^{N}}\int_{B_d(y)}\frac{1}{(1+|y-z|)^{N-2s}dz
}\leqslant\frac{\ln |y|}{(1+|y|)^{N-2s}}.$$
When $z\in\R^n\backslash(B_d(0)\cup B_d(y))$, we have $|z-y|\geqslant\frac{1}{2}|y|$ and
$|z|\geqslant\frac{1}{2}|y|$.\\
If $|z|\geq2|y|$, we get $|z-y|\geq|z|-|y|\geq\frac12|z|$
and
\begin{align*}
\frac1{(1+|y-z|)^{N-2s}}\frac1{(1+|z|)^{N}}\leqslant \frac1{1+|z|^{N-2s}}\frac1{(1+|z|)^{N}}.
\end{align*}
If $|z|\leqslant2|y|$, then
\begin{align*}
\frac1{(1+|y-z|)^{N-2s}}\frac1{(1+|z|)^{N}}\leqslant \frac1{1+|y|^{N-2s}}\frac1{(1+|z|)^{N}}\leqslant \frac1{1+|z|^{N-2s}}\frac1{(1+|z|)^{N}}.
\end{align*}
As a result,
\begin{align*}
\frac1{(1+|y-z|)^{N-2s}}\frac1{(1+|z|)^{N}}\leq \frac1{(1+|z|)^{N-2s}}\frac1{(1+|z|)^{N}},\ z\in\R^N\setminus(B_d(0)\cup B_d(y)),
\end{align*}
which implies that
\begin{align*}
&\quad\int_{\R^N\setminus(B_d(0)\cup B_d(y))}\frac1{(1+|y-z|)^{N-2s}}\frac1{(1+|z|)^{N}}dz\\
&\leq \int_{\R^N\setminus(B_d(0)\cup B_d(y))}\frac1{(1+|z|)^{N-2s}}\frac1{(1+|z|)^{N}}\leq \frac 1{(1+|z|)^{N-2s}}.
\end{align*}
Other terms can be handled similarly and we omit it  here.
\end{proof}

\medskip
\section{Energy expansion }

\renewcommand{\theequation}{B.\arabic{equation}}
\begin{Lem}\label{lemA5} There is a small $\epsilon>0$, such that

\begin{align*}
I(W_{r,h,\Lambda})&=k\Big(A+\frac{A_1}{\Lambda^m \mu^m}+\frac{A_2}{\Lambda^{m-2}\mu^m}(\mu r_{0}-r)^2
-\frac{B_{1}k^{N-2s}}{\Lambda^{N-2s}(r\sqrt{1-h^2})^{N-2s}}
-\frac{B_2}{\Lambda^{N-2s}(rh)^{N-2s}}\frac{hk}{\sqrt{1-h^2}}\nonumber\\
&\quad+\frac{C}{\mu^m}|\mu r_0-r|^{2+\theta}+\frac{1}{\mu^{m+\theta}}
+O\Big(\frac{1}{\mu^{N-2s-\frac{N-2s-m}{N-2s}-\frac{(N-2s-m)(N-2s-1)^2}{(N-2s)(N-2s+1)}+\theta}}\Big)\Big),
\end{align*}
where $ A, A_1, A_2, B_{1}\ and\ B_{2}$ are some positive constants.
\end{Lem}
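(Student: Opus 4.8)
The plan is to expand the energy functional $I(u)=\frac12\int_{\R^N}u(-\Delta)^su\,dy-\frac1{2^*_s}\int_{\R^N}K\big(\tfrac{|y|}{\mu}\big)|u|^{2^*_s}\,dy$ associated with \eqref{eq14} directly, exploiting that $W_{r,h,\Lambda}$ is a sum of $2k$ standard bubbles, each solving $(-\Delta)^sU=U^{2^*_s-1}$. Writing $\{x_\ell\}_{\ell=1}^{2k}=\{\overline{x}_{j}\}_{j=1}^k\cup\{\underline{x}_{j}\}_{j=1}^k$ and using the bubble equation, the quadratic part becomes $\int_{\R^N}W_{r,h,\Lambda}(-\Delta)^sW_{r,h,\Lambda}=\sum_{\ell,\ell'}\int_{\R^N}U_{x_\ell,\Lambda}^{2^*_s-1}U_{x_{\ell'},\Lambda}$, whose diagonal is $2k\int_{\R^N}U_{0,1}^{2^*_s}$ and whose off-diagonal is the interaction sum $S:=\sum_{\ell\ne\ell'}\int_{\R^N}U_{x_\ell,\Lambda}^{2^*_s-1}U_{x_{\ell'},\Lambda}$. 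For the potential part I use the elementary expansion $W_{r,h,\Lambda}^{2^*_s}=\sum_\ell U_{x_\ell,\Lambda}^{2^*_s}+2^*_s\sum_\ell U_{x_\ell,\Lambda}^{2^*_s-1}\sum_{\ell'\ne\ell}U_{x_{\ell'},\Lambda}+R$, with $\int_{\R^N}K(\tfrac{|y|}{\mu})|R|$ controlled, up to higher order, by $\sum_{\ell\ne\ell'}\int_{\R^N}U_{x_\ell,\Lambda}^{p_1}U_{x_{\ell'},\Lambda}^{p_2}$ ($p_1+p_2=2^*_s$, $p_1,p_2\ge1$). Substituting $K=1+(K-1)$, using that the $2k$ self-terms $\int_{\R^N}(K-1)U_{x_\ell,\Lambda}^{2^*_s}$ are all equal (radiality of $K$ and $|\overline{x}_{1}|=|\overline{x}_{j}|=|\underline{x}_{j}|=r$), and $\frac12-\frac1{2^*_s}=\frac sN$, one arrives at $I(W_{r,h,\Lambda})=2k\frac sN\int_{\R^N}U_{0,1}^{2^*_s}-\frac{2k}{2^*_s}\int_{\R^N}(K(\tfrac{|y|}{\mu})-1)U_{\overline{x}_{1},\Lambda}^{2^*_s}-\frac12S+(\text{errors})$, which identifies $A=2\frac sN\int_{\R^N}U_{0,1}^{2^*_s}>0$.

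Next I would evaluate the self-potential term $\int_{\R^N}\big(K(\tfrac{|y|}{\mu})-1\big)U_{\overline{x}_{1},\Lambda}^{2^*_s}$ by rescaling $z=\Lambda(y-\overline{x}_{1})$ and writing $\frac{|\overline{x}_{1}+z/\Lambda|}{\mu}-r_0=\frac1\mu\big(\frac{\langle e,z\rangle}{\Lambda}-(\mu r_0-r)\big)+O\big(\frac{|z|^2}{\mu^2}\big)$ with $e=\overline{x}_{1}/|\overline{x}_{1}|$; inserting this into \eqref{eq13}, factoring out $\mu^{-m}$, and Taylor-expanding $|a+b|^m$ in the small quantity $b=-(\mu r_0-r)$ about $a=\langle e,z\rangle/\Lambda$ (legitimate since $|a|^{m-2}$ is locally integrable against $U_{0,1}^{2^*_s}$ because $m>\frac{N-2s}2>1$). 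The linear term vanishes by oddness of $z\mapsto\langle e,z\rangle$, and by rotational symmetry of $U_{0,1}$ the zeroth- and second-order terms yield exactly $\frac{A_1}{\Lambda^m\mu^m}+\frac{A_2}{\Lambda^{m-2}\mu^m}(\mu r_0-r)^2$ with $A_1=\frac{2c_0}{2^*_s}\int_{\R^N}U_{0,1}^{2^*_s}|\langle e,z\rangle|^m>0$ and $A_2$ the analogous positive constant with exponent $m-2$. The cubic remainder, the $O(|z|^2/\mu^2)$ correction, and the splitting of $\R^N$ into $\{\,||y|-\mu r_0|\le\sqrt\mu\,\}$ and its complement (the latter handled crudely by the polynomial decay of $U$, as in the proof of Lemma~\ref{lem 21}) are all absorbed into $\frac{C}{\mu^m}|\mu r_0-r|^{2+\theta}+\frac{C}{\mu^{m+\theta}}$.

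For the interactions I would use $U_{x,\Lambda}(y)=\frac{c_{N,s}}{\Lambda^{(N-2s)/2}|y-x|^{N-2s}}(1+o(1))$ for $|y-x|\gg\Lambda^{-1}$, together with concentration of $U_{x_\ell,\Lambda}^{2^*_s-1}$ at $x_\ell$, to get the standard asymptotics $\int_{\R^N}U_{x_\ell,\Lambda}^{2^*_s-1}U_{x_{\ell'},\Lambda}=\frac{\overline B}{(\Lambda|x_\ell-x_{\ell'}|)^{N-2s}}(1+o(1))$. By the symmetry of the configuration, $S=2k\big[\sum_{j=2}^k\int U_{\overline{x}_{1},\Lambda}^{2^*_s-1}U_{\overline{x}_{j},\Lambda}+\sum_{j=1}^k\int U_{\overline{x}_{1},\Lambda}^{2^*_s-1}U_{\underline{x}_{j},\Lambda}\big]$, and applying Lemma~\ref{lemA4} with $\tau=N-2s$ to the two sums $\sum_{j\ge2}|\overline{x}_{1}-\overline{x}_{j}|^{-(N-2s)}$ and $\sum_{j\ge1}|\overline{x}_{1}-\underline{x}_{j}|^{-(N-2s)}$ converts $-\frac12S$ into precisely $k\big(-\frac{B_1k^{N-2s}}{\Lambda^{N-2s}(r\sqrt{1-h^2})^{N-2s}}-\frac{B_2}{\Lambda^{N-2s}(rh)^{N-2s}}\frac{hk}{\sqrt{1-h^2}}\big)+(\text{errors})$, with $B_1,B_2>0$ built from $\overline B$ and the constants of Lemma~\ref{lemA4} ($B_2$ involving $D_1=\int_0^{+\infty}(1+x^2)^{-(N-2s)/2}dx$). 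Finally, gathering all error contributions — the $(K-1)$-corrections to $S$, the $o(1)$'s in the pairwise asymptotics, $\int K|R|$, and the sub-leading tails in Lemma~\ref{lemA4} — and substituting $\mu=k^{(N-2s)/(N-2s-m)}$ and $h\sim h_0\sim k^{-(N-2s-1)/(N-2s+1)}$, each is bounded by $O\big(\mu^{-(N-2s-\frac{N-2s-m}{N-2s}-\frac{(N-2s-m)(N-2s-1)^2}{(N-2s)(N-2s+1)})+\theta}\big)$, which completes the expansion.

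The main obstacle is this last bookkeeping. For $(r,h,\Lambda)\in\D$ the genuinely $h$-dependent term $\frac{B_2}{\Lambda^{N-2s}(rh)^{N-2s}}\frac{hk}{\sqrt{1-h^2}}$ is itself only of order $\mu^{-(N-2s-\frac{N-2s-m}{N-2s}-\frac{(N-2s-m)(N-2s-1)^2}{(N-2s)(N-2s+1)})}$, far below the principal $\mu^{-m}$ scale of the $A_1,A_2,B_1$ terms, so every error must be pushed below it by a further factor $\mu^{-\theta}$. Moreover, the top–bottom sum $\sum_{j\ge1}|\overline{x}_{1}-\underline{x}_{j}|^{-(N-2s)}$ lives in a different asymptotic regime from the along–circle sum: since $|\overline{x}_{1}-\underline{x}_{j}|=2r\sqrt{(1-h^2)\sin^2\tfrac{(j-1)\pi}{k}+h^2}$, it is dominated by the small-$j$ terms where the distance is essentially the vertical gap $2rh$, and extracting its leading coefficient $D_1$ with a remainder sharper by $\mu^{-\theta}$ is the delicate point. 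This is exactly where the extra parameter $h$ forces estimates finer than in the single–circle fractional constructions, and is the reason the admissible ranges of $\tau$, $\theta$, $\epsilon$ and of $m$ in \eqref{eq13} are imposed.
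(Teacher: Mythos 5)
Your proposal follows essentially the same route as the paper's proof: split $I(W_{r,h,\Lambda})$ into the quadratic part (reduced via the bubble equation to the diagonal term plus the pairwise interactions) and the potential part (self-terms plus cross-terms), evaluate the self-term $\int (K(\tfrac{|y|}{\mu})-1)U_{\overline{x}_1,\Lambda}^{2^*_s}$ by Taylor expansion of $K$ about $r_0$ to produce the $A_1$ and $A_2$ contributions, and convert the two interaction sums into the $B_1$ and $B_2$ terms via Lemma \ref{lemA4}, with the remaining errors absorbed into the stated remainder. The decomposition, the identification of $A=2\tfrac{s}{N}\int U^{2^*_s}$, the net coefficient $-\tfrac12 S$ on the interactions, and the error bookkeeping all match the paper's argument, so the proposal is correct and not materially different.
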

\begin{proof}
Recall
$$ I(u)=\frac{1}{2}\int_{\R^N}|(-\Delta)^{\frac{s}{2}} u|^2
-\frac{1}{2^{*}_s}\int_{\R^N}K\Big(\frac{|y|}{\mu}\Big)|u|^{2^{*}_s}.$$
We should calculate
\begin{align*} I(W_{r,h,\Lambda})&=\frac{1}{2}\int_{\R^N}|(-\Delta)^{\frac{s}{2}} W_{r,h,\Lambda}|^2
-\frac{1}{2^{*}_s}\int_{\R^N}K\Big(\frac{|y|}{\mu}\Big)|W_{r,h,\Lambda}|^{2^{*}_s}\\
&:=I_1+I_2.
\end{align*}

First, by symmetry, we obtain
\begin{align}\label{A4}
\frac{1}{2}\int_{\R^N}|(-\Delta)^{\frac{s}{2}} W_{r,h,\Lambda}|^2
&=\frac{1}{2}\sum_{i=1}^k\sum_{j=1}^k\int_{\R^N}\Big(U^{2^{*}_s-1}_{\overline{x}_{j,\Lambda}}+U^{2^{*}_s-1}_{\underline{x}_{j,\Lambda}}\Big)\Big(U_{\overline{x}_{i},\Lambda}+U_{\underline{x}_{i,\Lambda}}\Big)\nonumber\\
&=\frac{1}{2}\sum_{i=1}^k\sum_{j=1}^k\int_{\R^N}\Big(U^{2^{*}_s-1}_{\overline{x}_{j,\Lambda}}U_{\overline{x}_{i},\Lambda}+U^{2^{*}_s-1}_{\overline{x}_{j,\Lambda}}U_{\underline{x}_{i,\Lambda}}+U^{2^{*}_s-1}_{\underline{x}_{j,\Lambda}}U_{\overline{x}_{i},\Lambda}+U^{2^{*}_s-1}_{\underline{x}_{j,\Lambda}}U_{\underline{x}_{i,\Lambda}}\Big)\nonumber\\
&=k\int_{\R^N}U^{2^{*}_s}+k\sum_{i=2}^k\int_{\R^N}U^{2^{*}_s-1}_{\overline{x}_{1,\Lambda}}U_{\overline{x}_{i},\Lambda}+k\sum_{j=1}^k\int_{\R^N}U^{2^{*}_s-1}_{\underline{x}_{1,\Lambda}}U_{\overline{x}_{j},\Lambda}\nonumber\\
&=k\int_{\R^N}U^{2^{*}_s}+k\sum_{j=2}^k\frac{B_{0}}{\Lambda^{N-2s}|\overline{x}_{1}-\overline{x}_{j}|^{N-2s}}+
k\sum_{j=1}^k\frac{B_{1}}{\Lambda^{N-2s}|\overline{x}_{1}-\underline{x}_{j}|^{N-2s}}\nonumber\\
&\quad+O\Big(k\sum_{j=2}^k\frac{1}{|\overline{x}_{1}-\overline{x}_{j}|^{N-2s+\sigma}}+k\sum_{j=1}^k\frac{1}{|\overline{x}_{1}-\underline{x}_{j}|^{N-2s+\sigma}}\Big).
\end{align}
Next, we consider $I_2$. By Lemma \ref{lemA1},
\begin{align*}
\int_{\R^N}K\Big(\frac{|y|}{\mu}\Big)|W_{r,h,\Lambda}|^{2^{*}_s}&=2k\int_{\Omega^{+}_1}K\Big(\frac{|y|}{\mu}\Big)\Big(U_{\overline{x}_{1},\Lambda}+U_{\underline{x}_{1},\Lambda}+\sum_{j=2}^k U_{\overline{x}_{j},\Lambda}+\sum_{j=2}^k U_{\underline{x}_{j},\Lambda}\Big)^{2^{*}_s}\nonumber\\
&=2k\Big(\int_{\Omega^{+}_1}K\Big(\frac{|y|}{\mu}\Big)U_{\overline{x}_{1},\Lambda}^{2^{*}_s}+2^*_sK\Big(\frac{|y|}{\mu}\Big)U_{\overline{x}_{1},\Lambda}^{2^{*}_s-1}(\sum_{j=2}^k U_{\overline{x}_{j},\Lambda}+\sum_{j=1}^k U_{\underline{x}_{j},\Lambda})\nonumber\\
&\quad+O\Big(U_{\overline{x}_{1},\Lambda}^{2^*_s/2}(\sum_{j=2}^k U_{\overline{x}_{j},\Lambda}+\sum_{j=1}^k U_{\underline{x}_{j},\Lambda})^{2^*_s/2}\Big)\nonumber\\
&:=2k(I_{21}+I_{22}+I_{23}).
\end{align*}
For $I_{21}$, we divide the space $\Omega^+_1$ into two parts, namely,
$$K_1:=\{y\in\Omega^+_1:\arrowvert|y|-\mu r_0\arrowvert\geqslant\delta\mu\}$$
and
$$K_2:=\{y\in\Omega^+_1:\arrowvert|y|-\mu r_0\arrowvert<\delta\mu\},$$
where $\delta$ is the constant in $K(|y|)$.
For the space $K_1$, we have
$$|y-\overline{x}_{1}|\geqslant||y|-|\overline{x}_{1}||\geqslant||y|-\mu r_0|-||\overline{x}_{1}|-\mu r_0||\geqslant\frac{1}{2}\delta\mu.$$
So
\begin{align*}
\int_{K_1}[K\Big(\frac{|y|}{\mu}\Big)-1]U_{\overline{x}_{1},\Lambda}^{2^{*}_s}
&\leqslant C\int_{K_1}\Big(\frac{1}{1+|y-\overline{x}_{1}|}\Big)^{2N}\\
&\leqslant \frac{C}{\mu^{N-\tau}}\int_{K_1}\Big(\frac{1}{1+|y-\overline{x}_{1}|}\Big)^{N+\tau}\\
&=O(\frac{C}{\mu^{N-\tau}}).
\end{align*}
For the space $K_2$, by the property of $K(y)$, we  obtain that
\begin{align*}
\int_{K_2}[K\Big(\frac{|y|}{\mu}\Big)-1]U_{\overline{x}_{1},\Lambda}^{2^{*}_s}&=
-\frac{c_{0}}{\mu^m}\int_{K_2}||y|-\mu r_0|^m U_{\overline{x}_{1},\Lambda}^{2^{*}_s}
+O\Big(\frac{1}{\mu^{m+\theta}}\int_{K_2}||y|-\mu r_0|^{m+\theta}U_{\overline{x}_{1},\Lambda}^{2^{*}_s}\Big)\nonumber\\
&=-\frac{c_{0}}{\mu^m}\int_{\R^N}||y-\overline{x}_{1}|-\mu r_0|^m U_{0,\Lambda}^{2^{*}_s}
+O\Big(\frac{1}{\mu^{m+\theta}}\Big)\\
&=-\frac{c_{0}}{\mu^m}\int_{\R^N\backslash B_{\frac{\overline{x}_{1}}{2}}(0)}||y-\overline{x}_{1}|-\mu r_0|^m U_{0,\Lambda}^{2^{*}_s}
-\frac{c_{0}}{\mu^m}\int_{B_{\frac{\overline{x}_{1}}{2}}(0)}||y-\overline{x}_{1}|-\mu r_0|^m U_{0,\Lambda}^{2^{*}_s}\\
&\quad+\frac{1}{\mu^{m+\theta}}.
\end{align*}
However, we have
\begin{align*}
\frac{1}{\mu^m}\int_{\R^N\backslash B_{\frac{\overline{x}_{1}}{2}}(0)}||y-\overline{x}_{1}|-\mu r_0|^m U_{0,\Lambda}^{2^{*}_s}\leqslant C\int_{\R^N\backslash B_{\frac{\overline{x}_{1}}{2}}(0)}|\frac{|y-\overline{x}_{1}|}{\mu^m}+ r_0|^m U_{0,\Lambda}^{2^{*}_s}\leqslant\frac{C}{\mu^{N-\tau}}.
\end{align*}
If $y\in B_{\frac{\overline{x}_{1}}{2}}(0)$, since $|\overline{x}_{1}|=r$, by symmetry property, we have
$$\int_{B_{\frac{\overline{x}_{1}}{2}}(0)}||y-\overline{x}_{1}|-\mu r_0|^m U_{0,\Lambda}^{2^{*}_s}=
\int_{B_{\frac{\overline{x}_{1}}{2}}(0)}||y-re_1|^m U_{0,\Lambda}^{2^{*}_s},$$
where $e_1=(1,0,\cdots,0)$.
Thus, we obtain
\begin{align*}
&\quad\int_{B_{\frac{\overline{x}_{1}}{2}}(0)}||y-\overline{x}_{1}|-\mu r_0|^m U_{0,\Lambda}^{2^{*}_s}\\
&=\int_{B_{\frac{\overline{x}_{1}}{2}}(0)}|y_1|^m U_{0,\Lambda}^{2^{*}_s}+\frac{1}{2}m(m-1)\int_{B_{\frac{\overline{x}_{1}}{2}}(0)}|y_1|^{m-2} U_{0,\Lambda}^{2^{*}_s}|\mu r_0-r|^2+C|\mu r_0-r|^{2+\theta}.
\end{align*}
Hence, it holds that
\begin{align}\label{I21}
&\quad I_{21}=\int_{\Omega^{+}_1}K\Big(\frac{|y|}{\mu}\Big)U_{\overline{x}_{1},\Lambda}^{2^{*}_s}=\int_{\R^N}U^{2^{*}_s}
-\frac{c_{0}}{\mu^m\Lambda^m}\int_{B_{\frac{\overline{x}_{1}}{2}}(0)}|y_1|^m U^{2^{*}_s}
\nonumber\\&-\frac{c_{0}}{\mu^m\Lambda^{m-2}}\frac{1}{2}m(m-1)\int_{B_{\frac{\overline{x}_{1}}{2}}(0)}|y_1|^{m-2} U^{2^{*}_s}|\mu r_0-r|^2+\frac{C}{\mu^m}|\mu r_0-r|^{2+\theta}+\frac{C}{\mu^{m+\theta}}.
\end{align}
For $I_{22}$, we  show
\begin{align*}
&\quad\int_{\Omega_1^+}K\Big(\frac{|y|}{\mu}\Big)U_{\overline{x}_{1},\Lambda}^{2^{*}_s-1}(\sum_{j=2}^k U_{\overline{x}_{j},\Lambda}+\sum_{j=1}^kU_{\underline{x}_{j},\Lambda})\nonumber\\
&=\int_{\Omega_1^+}U_{\overline{x}_{1},\Lambda}^{2^{*}_s-1}(\sum_{j=2}^k U_{\overline{x}_{j},\Lambda}+\sum_{j=1}^kU_{\underline{x}_{j},\Lambda})+\int_{\Omega_1^+}\Big(K\Big(\frac{|y|}{\mu}\Big)-1\Big)U_{\overline{x}_{1},\Lambda}^{2^{*}_s-1}(\sum_{j=2}^k U_{\overline{x}_{j},\Lambda}+\sum_{j=1}^k U_{\underline{x}_{j},\Lambda})\\
&=\int_{\R^N}U_{\overline{x}_{1},\Lambda}^{2^{*}_s-1}(\sum_{j=2}^k U_{\overline{x}_{j},\Lambda}+\sum_{j=1}^kU_{\underline{x}_{j},\Lambda})+
\int_{\R^N\backslash\Omega_1^+}U_{\overline{x}_{1},\Lambda}^{2^{*}_s-1}(\sum_{j=2}^k U_{\overline{x}_{j},\Lambda}+\sum_{j=1}^kU_{\underline{x}_{j},\Lambda})\\
&\quad+\int_{\Omega_1^+}\Big(K\Big(\frac{|y|}{\mu}\Big)-1\Big)U_{\overline{x}_{1},\Lambda}^{2^{*}_s-1}(\sum_{j=2}^k U_{\overline{x}_{j},\Lambda}+\sum_{j=1}^k U_{\underline{x}_{j},\Lambda})\\
&:=\int_{\R^N}U_{\overline{x}_{1},\Lambda}^{2^{*}_s-1}(\sum_{j=2}^k U_{\overline{x}_{j},\Lambda}+\sum_{j=1}^kU_{\underline{x}_{j},\Lambda})+I_{221}+I_{222}.
\end{align*}
For $I_{221}$, we  calculate
\begin{align}
&\qquad\sum_{j=2}^k\int_{\R^N\backslash\Omega_1^+}U_{\overline{x}_{1},\Lambda}^{2^{*}_s-1} U_{\overline{x}_{j},\Lambda}\nonumber\\
&=\sum_{j=2}^k\int_{(\R^N\backslash\Omega_1^+)\bigcap B_{\overline{d}_{j}/2}(\overline{x}_{1})}U_{\overline{x}_{1},\Lambda}^{2^{*}_s-1} U_{\overline{x}_{j},\Lambda}+\sum_{j=2}^k\int_{(\R^N\backslash\Omega_1^+)\backslash B_{\overline{d}_{j}/2}(\overline{x}_{1})}U_{\overline{x}_{1},\Lambda}^{2^{*}_s-1} U_{\overline{x}_{j},\Lambda}\nonumber\\
&=\sum_{j=2}^k\int_{(\R^N\backslash\Omega_1^+)\bigcap B_{\overline{d}_{j}/2}(\overline{x}_{1})}U_{\overline{x}_{1},\Lambda}^{2^{*}_s-1} U_{\overline{x}_{j},\Lambda}+O\Big(\sum_{j=2}^k\frac{1}{|\overline{x}_{1}-\overline{x}_{j}|^{N-\epsilon}}\Big)\nonumber\\
&\leqslant\sum_{j=2}^k\int_{ B_{\overline{d}_{j}/2}(\overline{x}_{1})\backslash B_{\overline{d}_{2}/2}(\overline{x}_{1})}U_{\overline{x}_{1},\Lambda}^{2^{*}_s-1} U_{\overline{x}_{j},\Lambda}+O\Big(\sum_{j=2}^k\frac{1}{|\overline{x}_{1}-\overline{x}_{j}|^{N-\epsilon}}\Big)\nonumber\\
&\leqslant \sum_{j=2}^k\frac{1}{|\overline{x}_{1}-\overline{x}_{j}|^{N-2s}}
\int_{B_{\Lambda\overline{d}_{j}/2}(0)\backslash B_{\Lambda\overline{d}_{2}/2}(0)}\frac{1}{(1+z^2|)^{\frac{N+2s}{2}}}+O\Big(\sum_{j=2}^k\frac{1}{|\overline{x}_{1}-\overline{x}_{j}|^{N-\epsilon}}\Big)
\nonumber\\
&\leqslant \sum_{j=2}^k\frac{1}{|\overline{x}_{1}-\overline{x}_{j}|^{N-2s}}
O(\frac{1}{\overline{d}_{2}^{2s}})+O\Big(\sum_{j=2}^k\frac{1}{|\overline{x}_{1}-\overline{x}_{j}|^{N-\epsilon}}\Big),
\nonumber
\end{align}
where $\overline{d}_{j}=|\overline{x}_{1}-\overline{x}_{j}|$ for $j=2,\cdots,k$ and $\overline{d}_{2}=|\overline{x}_{1}-\overline{x}_{2}|=2r\sqrt{1-h^2}\sin\frac{\pi}{k}=O(\frac{r}{k}).$
However, we get
\begin{align*}
\sum_{j=1}^k\int_{\R^N\backslash\Omega_1^+}U_{\overline{x}_{1},\Lambda}^{2^{*}_s-1} U_{\underline{x}_{j},\Lambda}=O\Big(\frac{1}{(rh)^{N-\epsilon}}\frac{hk}{\sqrt{1-h^2}}\Big),
\nonumber
\end{align*}
Thus, it holds
\begin{align}\label{I221}
I_{221}=\int_{\R^N\backslash\Omega_1^+}U_{\overline{x}_{1},\Lambda}^{2^{*}_s-1}(\sum_{j=2}^k U_{\overline{x}_{j},\Lambda}+\sum_{j=1}^kU_{\underline{x}_{j},\Lambda})=O\Big(\frac{k^{N-\epsilon}}{(r\sqrt{1-h^2})^{N-\epsilon}}\Big)+O\Big(\frac{1}{(rh)^{N-\epsilon}}\frac{hk}{\sqrt{1-h^2}}\Big).
\end{align}

Finally, we compute $I_{222}$:
\begin{align*}
I_{222}&=\int_{\Omega_1^+\bigcap K_1}\Big(K\Big(\frac{|y|}{\mu}\Big)-1\Big)U_{\overline{x}_{1},\Lambda}^{2^{*}_s-1}(\sum_{j=2}^k U_{\overline{x}_{j},\Lambda}+\sum_{j=1}^k U_{\underline{x}_{j},\Lambda})\\
&\quad+\int_{\Omega_1^+\bigcap K_2}\Big(K\Big(\frac{|y|}{\mu}\Big)-1\Big)U_{\overline{x}_{1},\Lambda}^{2^{*}_s-1}(\sum_{j=2}^k U_{\overline{x}_{j},\Lambda}+\sum_{j=1}^k U_{\underline{x}_{j},\Lambda}).
\end{align*}
We consider the first term. For any $y\in\Omega_1^+$, we have
$$|y-\overline{x}_{j}|\geqslant|\overline{x}_{1}-\overline{x}_{j}|-|y-\overline{x}_{1}|\geqslant\frac{1}{4}|\overline{x}_{1}-\overline{x}_{j}|,\quad\text{if} \
|y-\overline{x}_{1}|\leqslant\frac{1}{4}|\overline{x}_{1}-\overline{x}_{j}|$$
and
$$|y-\overline{x}_{j}|\geqslant|y-\overline{x}_{1}|\geqslant\frac{1}{4}|\overline{x}_{1}-\overline{x}_{j}|,\quad\text{if} \
|y-\overline{x}_{1}|\geqslant\frac{1}{4}|\overline{x}_{1}-\overline{x}_{j}|.$$
Similarly, we have
$$|y-\underline{x}_{j}|\geqslant\frac{1}{4}|\overline{x}_{1}-\underline{x}_{j}|.$$
Thus, it holds that
\begin{align*}
(\sum_{j=2}^kU_{\overline{x}_{j},\Lambda}+\sum_{j=1}^kU_{\underline{x}_{j},\Lambda})&\leqslant\frac{C}{1+|y-\overline{x}_{1}|^{N-2s-\beta}}\Big(
\sum_{j=2}^k\frac{1}{1+|y-\overline{x}_{1}|^{\beta}}+\sum_{j=1}^k\frac{1}{1+|y-\underline{x}_{j}|^{\beta}}\Big)\nonumber\\
&\leqslant\frac{C}{1+|y-\overline{x}_{1}|^{N-2s-\beta}}\Big(
\sum_{j=2}^k\frac{1}{|\overline{x}_{j}-\overline{x}_{1}|^{\beta}}+\sum_{j=1}^k\frac{1}{|\overline{x}_{1}-\underline{x}_{j}|^{\beta}}\Big)\nonumber\\
&\leqslant\frac{C}{1+|y-\overline{x}_{1}|^{N-2s-\beta}}\Big(\frac{k^\beta}{(r\sqrt{1-h^2})^{\beta}}+\frac{1}{(rh)^\beta}\frac{hk}{\sqrt{1-h^2}}\Big),
\end{align*}
where $\beta\in(\frac{N-2s-m}{N-2s},\frac{N-2s}{2})$. So we  get
\begin{align*}
\int_{\Omega_1^+\bigcap K_1}\Big(K\Big(\frac{|y|}{\mu}\Big)-1\Big)U_{\overline{x}_{1},\Lambda}^{2^{*}_s-1}(\sum_{j=2}^k U_{\overline{x}_{j},\Lambda}+\sum_{j=1}^k U_{\underline{x}_{j},\Lambda})
&\leqslant\frac{C}{\mu^{N-\beta-\epsilon}}\Big(\frac{k^\beta}{(r\sqrt{1-h^2})^{\beta}}+\frac{1}{(rh)^\beta}\frac{hk}{\sqrt{1-h^2}}\Big)\\
&=O\Big(\frac{k^{N-\epsilon}}{(r\sqrt{1-h^2})^{N-\epsilon}}\Big)+O\Big(\frac{1}{(rh)^{N-\epsilon}}\frac{hk}{\sqrt{1-h^2}}\Big).
\end{align*}
For the second term, we have
\begin{align*}
\int_{\Omega_1^+\bigcap K_1}\Big(K\Big(\frac{|y|}{\mu}\Big)-1\Big)U_{\overline{x}_{1},\Lambda}^{2^{*}_s-1}(\sum_{j=2}^k U_{\overline{x}_{j},\Lambda}+\sum_{j=1}^k U_{\underline{x}_{j},\Lambda})
&=O\Big(\frac{k^{N-\epsilon}}{(r\sqrt{1-h^2})^{N-\epsilon}}\Big)+O\Big(\frac{1}{(rh)^{N-\epsilon}}\frac{hk}{\sqrt{1-h^2}}\Big).
\end{align*}
So, we obtain
\begin{align}\label{I222}
I_{222}=O\Big(\frac{k^{N-\epsilon}}{(r\sqrt{1-h^2})^{N-\epsilon}}\Big)+O\Big(\frac{1}{(rh)^{N-\epsilon}}\frac{hk}{\sqrt{1-h^2}}\Big).
\end{align}
At last, we  easily calculate $I_{23}$,
\begin{align}\label{I23}
I_{23}&=O\Big(\int_{\Omega^{+}_1}K\Big(\frac{|y|}{\mu}\Big)U_{\overline{x}_{1},\Lambda}^{2^*_s/2}(\sum_{j=2}^k U_{\overline{x}_{j},\Lambda}+\sum_{j=1}^kU_{\underline{x}_{j},\Lambda})^{2^*_s/2}\Big)\nonumber\\
&=O\Big(\frac{k^{N-\epsilon}}{(r\sqrt{1-h^2})^{N-\epsilon}}\Big)+O\Big(\frac{1}{(rh)^{N-\epsilon}}\frac{hk}{\sqrt{1-h^2}}\Big).
\end{align}
Combing \eqref{A4}, \eqref{I21}, \eqref{I221}, \eqref{I222} and \eqref{I23}, we  obtain
\begin{align*}
I(W_{r,h,\Lambda})&=k\Big((1-\frac{2}{2^*_s})\int_{\R^N}U^{2^{*}_s}+\frac{2c_0\int_{\R^N}|y_1|^mU^{2^{*}_s}}{2^{*}_s\Lambda^m \mu^m}+\frac{c_0m(m-1)\int_{\R^N}|y_1|^{m-2}U^{2^{*}_s}}{\Lambda^{m-2}\mu^m}(\mu r_{0}-r)^2\nonumber\\
&\quad-\frac{B_{1}k^{N-2s}}{\Lambda^{N-2s}(r\sqrt{1-h^2})^{N-2s}}
-\frac{B_2}{\Lambda^{N-2s}(rh)^{N-2s}}\frac{hk}{\sqrt{1-h^2}}+\frac{C}{\mu^m}|\mu r_0-r|^{2+\theta}+\frac{C}{\mu^{m+\theta}}\\
&\quad
+O\Big(\frac{1}{\mu^{N-2s-\frac{N-2s-m}{N-2s}-\frac{(N-2s-m)(N-2s-1)^2}{(N-2s)(N-2s+1)}+\theta}}\Big)\Big),
\end{align*}
where obviously,
$\frac{1}{\mu^{N-2s-\frac{N-2s-m}{N-2s}-\frac{(N-2s-m)(N-2s-1)^2}{(N-2s)(N-2s+1)}}}\leqslant\frac{1}{\mu^{m}}$.
\end{proof}
Meanwhile, we also make the the following expansions for $\frac{\partial I(W_{r,h,\Lambda})}{\partial\Lambda}$ and $\frac{\partial I(W_{r,h,\Lambda})}{\partial h}$.

\begin{Lem}\label{lemA6} We have
\begin{align*}
\frac{\partial I(W_{r,h,\Lambda})}{\partial\Lambda}&=k\Big(-\frac{mA_1}{\Lambda^{m+1} \mu^m}-\frac{A_2(m-2)}{\Lambda^{m-1}\mu^m}(\mu r_{0}-r)^2
+\frac{B_{1}(N-2s)k^{N-2s}}{\Lambda^{N-2s+1}(r\sqrt{1-h^2})^{N-2s}}\nonumber\\
&\quad+\frac{B_2(N-2s)}{\Lambda^{N-2s+1}(rh)^{N-2s}}\frac{hk}{\sqrt{1-h^2}}
+O\Big(\frac{1}{\mu^{N-2s-\frac{N-2s-m}{N-2s}-\frac{(N-2s-m)(N-2s-1)^2}{(N-2s)(N-2s+1)}+\theta}}\Big)\Big),
\end{align*}
where $  A_1, A_2, B_{1}\ and\ B_{2}$ are defined in Lemma \eqref{lemA5}.
\end{Lem}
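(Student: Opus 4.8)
The plan is to differentiate the energy in the dual form $\dfrac{\partial I(W_{r,h,\Lambda})}{\partial\Lambda}=\big\langle I'(W_{r,h,\Lambda}),\tfrac{\partial W_{r,h,\Lambda}}{\partial\Lambda}\big\rangle$ and to expand the right-hand side exactly along the lines of the proof of Lemma \ref{lemA5}. Since $I'(W_{r,h,\Lambda})=(-\Delta)^sW_{r,h,\Lambda}-K(\tfrac{|y|}{\mu})W_{r,h,\Lambda}^{2^*_s-1}$ and each $U_{\overline{x}_{j},\Lambda},U_{\underline{x}_{j},\Lambda}$ solves $(-\Delta)^sU=U^{2^*_s-1}$, the linearity of $(-\Delta)^s$ gives
$$\Big\langle I'(W_{r,h,\Lambda}),\frac{\partial W_{r,h,\Lambda}}{\partial\Lambda}\Big\rangle=\int_{\R^N}\Big(\sum_{j=1}^k\big(U^{2^*_s-1}_{\overline{x}_{j},\Lambda}+U^{2^*_s-1}_{\underline{x}_{j},\Lambda}\big)-K\Big(\frac{|y|}{\mu}\Big)W_{r,h,\Lambda}^{2^*_s-1}\Big)\frac{\partial W_{r,h,\Lambda}}{\partial\Lambda}.$$
Using the $H_{s}$-symmetry I would reduce every integral, as in Lemma \ref{lemA5}, to $k$ copies of an integral concentrated near $\overline{x}_{1}$ and weighted by $U^{2^*_s-1}_{\overline{x}_{1},\Lambda}$ against $\partial_\Lambda U_{\overline{x}_{j},\Lambda}$ and $\partial_\Lambda U_{\underline{x}_{j},\Lambda}$; at the level of leading terms this is nothing but the $\Lambda$-derivative of the expansion of Lemma \ref{lemA5}, with $r$ and $h$ held fixed, the constant term $kA$ dropping out.

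For the main terms I would separate two groups. The self-interaction group comes from $\int_{\R^N}\big(U^{2^*_s-1}_{\overline{x}_{1},\Lambda}-K(\tfrac{|y|}{\mu})U^{2^*_s-1}_{\overline{x}_{1},\Lambda}\big)\partial_\Lambda U_{\overline{x}_{1},\Lambda}=\tfrac1{2^*_s}\partial_\Lambda\int_{\R^N}\big(1-K(\tfrac{|y|}{\mu})\big)U^{2^*_s}_{\overline{x}_{1},\Lambda}$, where the diagonal piece $\int U^{2^*_s-1}_{\overline{x}_{1},\Lambda}\partial_\Lambda U_{\overline{x}_{1},\Lambda}$ vanishes because $\int_{\R^N}U^{2^*_s}_{0,\Lambda}$ is independent of $\Lambda$; Taylor-expanding $K$ about $r_0$ through \eqref{eq13} and rescaling $y\mapsto\overline{x}_{1}+\Lambda^{-1}y$ turns this into the $\Lambda$-derivative of the term $I_{21}$ appearing in the proof of Lemma \ref{lemA5}, and produces $-\tfrac{mA_1}{\Lambda^{m+1}\mu^m}$ and $-\tfrac{A_2(m-2)}{\Lambda^{m-1}\mu^m}(\mu r_0-r)^2$ with the same constants $A_1,A_2$. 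The bubble–bubble group comes from the cross terms $\int U^{2^*_s-1}_{\overline{x}_{1},\Lambda}\partial_\Lambda U_{\overline{x}_{j},\Lambda}$ and $\int U^{2^*_s-1}_{\overline{x}_{1},\Lambda}\partial_\Lambda U_{\underline{x}_{j},\Lambda}$ (together with their symmetric partners): each equals, up to lower order, $\partial_\Lambda\!\big(B\,\Lambda^{-(N-2s)}|\overline{x}_{1}-\overline{x}_{j}|^{-(N-2s)}\big)=-(N-2s)B\,\Lambda^{-(N-2s+1)}|\overline{x}_{1}-\overline{x}_{j}|^{-(N-2s)}$, and summing over $j$ by Lemma \ref{lemA4} converts $\sum_{j\geqslant2}|\overline{x}_{1}-\overline{x}_{j}|^{-(N-2s)}$ into a multiple of $k^{N-2s}(r\sqrt{1-h^2})^{-(N-2s)}$ and $\sum_{j}|\overline{x}_{1}-\underline{x}_{j}|^{-(N-2s)}$ into a multiple of $(rh)^{-(N-2s)}hk/\sqrt{1-h^2}$, yielding the terms $+\tfrac{B_1(N-2s)k^{N-2s}}{\Lambda^{N-2s+1}(r\sqrt{1-h^2})^{N-2s}}$ and $+\tfrac{B_2(N-2s)}{\Lambda^{N-2s+1}(rh)^{N-2s}}\tfrac{hk}{\sqrt{1-h^2}}$.

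The remaining point, and the one requiring genuine care, is the error control: one must check that the extra factor $\partial_\Lambda$ does not enlarge any of the remainders of Lemma \ref{lemA5}. Here I would use that $\partial_\Lambda U_{x,\Lambda}$ obeys the same algebraic pointwise bound $|\partial_\Lambda U_{x,\Lambda}(y)|\leqslant C(1+|y-x|)^{-(N-2s)}$ as $U_{x,\Lambda}$ itself — only the $\Lambda$-prefactor changes, and on $\D$ one has $\Lambda_0\sim1$ with $\Lambda$ comparable to $\Lambda_0$ — so that every interaction integral estimated in Lemma \ref{lemA5} keeps its order once one factor $U$ is replaced by $\partial_\Lambda U$; together with $|\mu r_0-r|\leqslant\mu^{-\theta}$ on $\D$ and the elementary inequality $\mu^{-(N-2s-\frac{N-2s-m}{N-2s}-\frac{(N-2s-m)(N-2s-1)^2}{(N-2s)(N-2s+1)})}\leqslant\mu^{-m}$ already recorded in Lemma \ref{lemA5}, this lets me absorb the $\partial_\Lambda$ of the remainders carried by $I_{221},I_{222},I_{23}$ and by the tail of the $K$-term outside $\{\,||y|-\mu r_0|\leqslant\sqrt\mu\,\}$ into $O\big(\mu^{-(N-2s-\frac{N-2s-m}{N-2s}-\frac{(N-2s-m)(N-2s-1)^2}{(N-2s)(N-2s+1)})+\theta}\big)$. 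Collecting the four main terms together with this remainder, and restoring the prefactor $k$, gives the claimed identity. I expect this last step to be the genuine obstacle: because the fractional bubbles decay only polynomially, in contrast to the exponentially decaying bubbles of the local problem, one cannot argue softly and must re-run the interaction estimates of Lemma \ref{lemA5} with $\partial_\Lambda U$ in place of $U$, verifying in each instance that the tail contribution stays below the stated threshold.
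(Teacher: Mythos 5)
Your proposal is correct and follows exactly the route the paper intends: the paper omits the proof of this lemma with the remark that it is "similar to Lemma \ref{lemA5}," and your argument is precisely that elaboration — writing $\partial_\Lambda I(W_{r,h,\Lambda})=\langle I'(W_{r,h,\Lambda}),\partial_\Lambda W_{r,h,\Lambda}\rangle$, observing that the diagonal term vanishes by scale invariance of $\int U_{0,\Lambda}^{2^*_s}$, recovering the main terms as the $\Lambda$-derivatives of those in Lemma \ref{lemA5}, and rechecking the remainders using that $\partial_\Lambda U_{x,\Lambda}$ obeys the same decay $(1+|y-x|)^{-(N-2s)}$ as $U_{x,\Lambda}$ (which is \eqref{22} in the paper). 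Your write-up in fact supplies more detail than the paper does, and the point you flag as the genuine obstacle (re-running the polynomially-decaying interaction estimates with one factor replaced by $\partial_\Lambda U$) is indeed the only substantive check required.
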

\begin{proof}
The proof of this Lemma is similar to Lemma \ref{lemA5}, so we omit it here.
\end{proof}
\begin{Lem}\label{lemA7} We have
\begin{align*}
\frac{\partial I(W_{r,h,\Lambda})}{\partial h}&=k\Big(\frac{B_2(N-2s-1)k}{\Lambda^{N-2s}r^{N-2s}h^{N-2s}\sqrt{1-h^2}}
-\frac{B_1(N-2s)k^{N-2s}h}{\Lambda^{N-2s}r^{N-2s}(\sqrt{1-h^2})^{N-2s+2}}\\
&\quad+O\Big(\frac{1}{\mu^{N-2s-\frac{N-2s-m}{N-2s}-\frac{(N-2s-m)(N-2s-1)}{(N-2s)(N-2s+1)}+\theta}}
\Big)\Big),
\end{align*}
where $  A_1, A_2, B_{1}\ and\ B_{2}$ are defined in Lemma \eqref{lemA5}.
\end{Lem}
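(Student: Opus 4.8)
The plan is to follow the scheme of the proof of Lemma \ref{lemA5}, but to differentiate in $h$ \emph{before} discarding lower order terms. The starting point is
\[
\frac{\partial I(W_{r,h,\Lambda})}{\partial h}=\Big\langle I'(W_{r,h,\Lambda}),\frac{\partial W_{r,h,\Lambda}}{\partial h}\Big\rangle .
\]
Using that each $U_{\overline{x}_j,\Lambda}$ and $U_{\underline{x}_j,\Lambda}$ solves $(-\Delta)^sU=U^{2^*_s-1}$, so that $(-\Delta)^sW_{r,h,\Lambda}=\sum_jU^{2^*_s-1}_{\overline{x}_j,\Lambda}+\sum_jU^{2^*_s-1}_{\underline{x}_j,\Lambda}$, and splitting $K(|y|/\mu)W^{2^*_s-1}_{r,h,\Lambda}=W^{2^*_s-1}_{r,h,\Lambda}+(K(|y|/\mu)-1)W^{2^*_s-1}_{r,h,\Lambda}$, one gets
\[
\frac{\partial I(W_{r,h,\Lambda})}{\partial h}=\int_{\R^N}\Big(\sum_jU^{2^*_s-1}_{\overline{x}_j,\Lambda}+\sum_jU^{2^*_s-1}_{\underline{x}_j,\Lambda}-W^{2^*_s-1}_{r,h,\Lambda}\Big)\frac{\partial W_{r,h,\Lambda}}{\partial h}-\int_{\R^N}\Big(K\Big(\tfrac{|y|}{\mu}\Big)-1\Big)W^{2^*_s-1}_{r,h,\Lambda}\frac{\partial W_{r,h,\Lambda}}{\partial h}.
\]
I would treat the two integrals separately.

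For the potential term I would use, near the bubbles, $|K(|y|/\mu)-1|\leqslant C\mu^{-m}\big(\,\big||y|-|\overline{x}_1|\big|^m+\mu^{-m\theta}\,\big)$, and the fast decay of $K(|y|/\mu)-1$ far from the bubbles, together with the pointwise bound $|\partial_hW_{r,h,\Lambda}|\leqslant C\mu\big(\sum_j(1+|y-\overline{x}_j|)^{-(N-2s+1)}+\sum_j(1+|y-\underline{x}_j|)^{-(N-2s+1)}\big)$, which holds because $|\partial_h\overline{x}_j|,|\partial_h\underline{x}_j|=O(r)=O(\mu)$ while $\partial_hU_{\overline{x}_j,\Lambda}(y)=-\nabla U_{0,\Lambda}(y-\overline{x}_j)\cdot\partial_h\overline{x}_j$ and $|\nabla U_{0,\Lambda}(z)|\leqslant C(1+|z|)^{-(N-2s+1)}$. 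Combining these with Lemma \ref{lemA1}, this term is of order $O\big(\mu^{-(N-2s-\frac{N-2s-m}{N-2s}-\frac{(N-2s-m)(N-2s-1)}{(N-2s)(N-2s+1)}+\theta)}\big)$: the factor $\mu$ that $\partial_h$ brings down is paid for by the $\mu^{-m}$ coming from $K(|y|/\mu)-1$.

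The remaining integral is the main term and is handled exactly as in Lemma \ref{lemA5}. By the symmetry built into $H_s$ it equals $2k$ times the corresponding integral over $\Omega_1^+$; on $\Omega_1^+$ one expands $W^{2^*_s-1}_{r,h,\Lambda}$ around $U_{\overline{x}_1,\Lambda}$ (with remainder $\sum_{j\geqslant2}U_{\overline{x}_j,\Lambda}+\sum_jU_{\underline{x}_j,\Lambda}$, which is small there) and $\partial_hW_{r,h,\Lambda}$ around $\partial_hU_{\overline{x}_1,\Lambda}$. The self-interaction contribution vanishes identically since $\int_{\R^N}U^{2^*_s-1}_{\overline{x}_1,\Lambda}\partial_hU_{\overline{x}_1,\Lambda}=\tfrac{1}{2^*_s}\partial_h\int_{\R^N}U^{2^*_s}_{0,\Lambda}=0$, and after an integration by parts moving the derivatives off $\partial_hU$ the surviving cross terms reduce, to leading order, to the $h$-derivatives of the interaction integrals of Lemma \ref{lemA5}:
\[
\partial_h\Big(k\sum_{j\geqslant2}\frac{B_0}{\Lambda^{N-2s}|\overline{x}_1-\overline{x}_j|^{N-2s}}\Big)\qquad\text{and}\qquad\partial_h\Big(k\sum_{j}\frac{B_1}{\Lambda^{N-2s}|\overline{x}_1-\underline{x}_j|^{N-2s}}\Big),
\]
the net sign flip again coming from the contribution of the second integral in $I$. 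Using $|\overline{x}_1-\overline{x}_j|=2r\sqrt{1-h^2}\sin\frac{(j-1)\pi}{k}$, $|\overline{x}_1-\underline{x}_j|=2r\sqrt{(1-h^2)\sin^2\frac{(j-1)\pi}{k}+h^2}$ and Lemma \ref{lemA4}, one computes $\partial_h|\overline{x}_1-\overline{x}_j|^{-(N-2s)}=\frac{(N-2s)h}{1-h^2}|\overline{x}_1-\overline{x}_j|^{-(N-2s)}$, which after summation (with the net sign) produces $-\frac{B_1(N-2s)k^{N-2s}h}{\Lambda^{N-2s}r^{N-2s}(\sqrt{1-h^2})^{N-2s+2}}$; differentiating $\frac{hk}{\sqrt{1-h^2}}(rh)^{-(N-2s)}$ in $h$ produces, since $h\to0$, the dominant piece $\frac{B_2(N-2s-1)k}{\Lambda^{N-2s}r^{N-2s}h^{N-2s}\sqrt{1-h^2}}$. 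All other contributions are absorbed into the error.

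The hard part is the error bookkeeping. Differentiating in $h$ is delicate: through $\partial_h\overline{x}_j=O(\mu)$ each $\partial_h$ a priori costs a factor $\mu$, and differentiating the upper--lower interaction even costs a factor $h^{-1}=O(\mu^{(N-2s-m)(N-2s-1)/((N-2s)(N-2s+1))})$, whereas differentiating the upper--upper interaction is favorable, costing only a factor $h=O(\mu^{-(N-2s-m)(N-2s-1)/((N-2s)(N-2s+1))})$. One therefore has to go back through each estimate in the proof of Lemma \ref{lemA5} and check that these gains remain dominated by the decay already present; the outcome is precisely the exponent $N-2s-\frac{N-2s-m}{N-2s}-\frac{(N-2s-m)(N-2s-1)}{(N-2s)(N-2s+1)}+\theta$ in the error. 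Since the rest of the argument runs word for word parallel to the proof of Lemma \ref{lemA5}, I would only spell out the two or three places where the $h$-derivative changes the size of a term and otherwise refer to that proof.
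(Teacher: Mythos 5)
Your proposal is correct and follows essentially the same route as the paper: the paper likewise reduces $\partial_h I(W_{r,h,\Lambda})$ to the $h$-derivative of the interaction sums $\sum_j|\overline{x}_1-\overline{x}_j|^{-(N-2s)}$ and $\sum_j|\overline{x}_1-\underline{x}_j|^{-(N-2s)}$ (via Lemmas \ref{lemA4}--\ref{lemA5}) plus the $K$-term, and your two leading terms and signs agree with its computation. The only cosmetic difference is that the paper explicitly records the intermediate term $\frac{B_2k}{\Lambda^{N-2s}r^{N-2s}h^{N-2s-2}(\sqrt{1-h^2})^{3}}$ coming from differentiating $(1-h^2)^{-1/2}$ before absorbing it into the error, which your ``since $h\to0$, the dominant piece'' remark and final absorption step already cover.
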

\begin{proof}
First,
\begin{align*}\frac{\partial I(W_{r,h,\Lambda})}{\partial h}&=\frac{1}{2}\frac{\partial }{\partial h}\Big(\int_{\R^N}|(-\Delta)^{\frac{s}{2}}W_{r,h,\Lambda}|^2-\frac{1}{2^*_s}\int_{\R^N}K\Big(\frac{|y|}{\mu}\Big)|W_{r,h,\Lambda}|^{2^{*}_s}\Big)\nonumber\\
&=k\frac{\partial }{\partial h}\int_{\R^N}U^{2^*_s-2}_{\overline{x}_{1},\Lambda}
(\sum_{j=2}^k U_{\overline{x}_{j},\Lambda}+\sum_{j=1}^kU_{\underline{x}_{j},\Lambda})
-\int_{\R^N}K\Big(\frac{|y|}{\mu}\Big)|W_{r,h,\Lambda}|^{2^{*}_s-1}\frac{\partial W_{r,h,\Lambda}}{\partial h}.
\end{align*}
Similar to the proof of Lemma \eqref{lemA5}, we  get,
\begin{align*}
\frac{\partial I(W_{r,h,\Lambda})}{\partial h}&=k\Big(\frac{B_2(N-2s-1)k}{\Lambda^{N-2s}r^{N-2s}h^{N-2s}\sqrt{1-h^2}}
-\frac{B_1(N-2s)k^{N-2s}h}{\Lambda^{N-2s}r^{N-2s}(\sqrt{1-h^2})^{N-2s+2}}\nonumber\\
&\quad-\frac{B_2k}{\Lambda^{N-2s}r^{N-2s}h^{N-2s-2}(\sqrt{1-h^2})^3}
+O\Big(\frac{1}{\mu^{N-2s-\frac{N-2s-m}{N-2s}-\frac{(N-2s-m)(N-2s-1)}{(N-2s)(N-2s+1)}+\theta}}\Big)\Big).
\end{align*}
In fact, we  know $\frac{B_2k}{\Lambda^{N-2s}r^{N-2s}h^{N-2s-2}(\sqrt{1-h^2})^3}
\leqslant O\Big(\frac{1}{\mu^{N-2s-\frac{N-2s-m}{N-2s}-\frac{(N-2s-m)(N-2s-1)}{(N-2s)(N-2s+1)}+\theta}}\Big)$.
\end{proof}

\bigskip
\noindent\textbf{Acknowledgements} \,\,\, Q. Guo was supported by NNSF of China (No. 11771469). The first author is grateful for  the useful discussion with Lipeng Duan.

\end{document}